\newcommand{\Z}{\mathbb{Z}}
\newcommand{\N}{\mathbb{N}}
\newcommand{\C}{\mathbb{C}}
\newcommand{\cE}{\mathcal{E}}
\newcommand{\eps}{\varepsilon}
\newcommand{\Hawaii}{Hawai\kern.05em`\kern.05em\relax i}
\newcommand{\SOTh}{\mathrm{SOT}\text{-}}
\newcommand{\cstu}{\mathrm{C}^*_u}
\newtheorem*{rigprob*}{Rigidity Problem for uniform Roe Algebras}
\newtheorem*{rigprobcorona*}{Rigidity Problem for uniform Roe Coronas}
\newcommand{\cst}{\mathrm{C}^*}
\newcommand{\cstar}{$\mathrm{C}^*$}
\newcommand{\cU}{\mathcal{U}}
\newcommand{\cF}{\mathcal{F}}
\newcommand{\cP}{\mathcal{P}}
\newcommand{\bbN}{\mathbb{N}}
\newcommand{\cB}{\mathcal{B}}
\newcommand{\cK}{\mathcal{K}}
\newtheorem{theorem}{Theorem}[section]
\newtheorem*{theorem*}{Theorem}
\newtheorem{proposition}[theorem]{Proposition}
\newtheorem*{proposition*}{Proposition}
\newtheorem{lemma}[theorem]{Lemma}
\newtheorem*{lemma*}{Lemma}
\newtheorem{corollary}[theorem]{Corollary}
\newtheorem*{corollary*}{Corollar}
\newtheorem*{fact*}{Fact}
\theoremstyle{definition}
\newtheorem{definition}[theorem]{Definition}
\newtheorem*{definition*}{Definition}
\newtheorem*{acknowledgments}{Acknowledgments}
\newtheorem{claim}[theorem]{Claim}
\newtheorem*{claim*}{Claim}
\newtheorem*{conjecture*}{Conjecture}
\newtheorem{question}[theorem]{Question}
\theoremstyle{remark}
\newtheorem*{example*}{Example}
\newtheorem{remark}[theorem]{Remark}
\newtheorem*{remark*}{Remark}
\newtheorem*{note*}{Note}
\newtheorem*{question*}{Question}
\newcommand{\norm}[1]{\left\lVert #1 \right\rVert}
\DeclareMathOperator{\supp}{supp}
\DeclareMathOperator{\propg}{prop}
\DeclareMathOperator{\diam}{diam}
\numberwithin{equation}{section}
\begin{document}

\title[ONL for equi-approximable families of projections]{Operator norm localization property for equi-approximable families of projections}%

\date{\today} 
 
\author[Braga]{Bruno M. Braga}
\address[B. M. Braga]{PUC-Rio, Departament of Mathematics,
Gavea, Rio de Janeiro - CEP 22451-900, Brazil}
\email{demendoncabraga@gmail.com}
\urladdr{https://sites.google.com/site/demendoncabraga}
 
\author[Farah]{Ilijas Farah}
\address[I. Farah]{Department of Mathematics and Statistics\\
York University\\
4700 Keele Street\\
North York, Ontario\\ Canada, M3J 1P3\\
and 
Matemati\v cki Institut SANU\\
Kneza Mihaila 36\\
11\,000 Beograd, p.p. 367\\
Serbia}
\email{ifarah@yorku.ca}
\urladdr{https://ifarah.mathstats.yorku.ca}

\author[Vignati]{Alessandro Vignati}
\address[A. Vignati]{
Institut de Math\'ematiques de Jussieu (IMJ-PRG)\\
Universit\'e Paris Cit\'e\\
B\^atiment Sophie Germain\\
8 Place Aur\'elie Nemours \\ 75013 Paris, France}
\email{alessandro.vignati@imj-prg.fr}
\urladdr{http://www.automorph.net/avignati}


\begin{abstract}
The rigidity problem for uniform Roe algebras was recently positively solved. Before its solution was found, there were positive solutions under the assumption of certain technical geometric conditions. In this paper, we introduce weaker versions of the operator norm localization property (ONL) which turn out to characterize those technical geometric conditions. We use this to obtain new rigidity results for nonmetrizable coarse spaces. As an application, we provide a novel partial answer to a question of White and Willett about Cartan subalgebras of uniform Roe algebras. We also study embeddings between uniform Roe algebras.
\end{abstract}
\maketitle

\section{Introduction}\label{SectionIntro}
Given a metric space $X$, its uniform Roe algebra, denoted by $\cstu(X)$, is a $\mathrm{C}^*$-subalgebra of $\cB(\ell_2(X))$ --- the space of bounded linear operators on $\ell_2(X)$ --- which captures several aspects of the coarse geometry of $X$ (we defer its formal definition to \S\ref{SectionPrelim}). The rigidity problem for uniform Roe algebras of uniformly locally finite metric spaces asked how ``rigid'' is the procedure of constructing $\cstu(X)$ given the metric space $X$. This problem has been recently positively solved: precisely, it was shown in \cite[Theorem 1.2]{BaudierBragaFarahKhukhroVignatiWillett2021uRaRig} that if $X$ and $Y$ are uniformly locally finite metric spaces with isomorphic uniform Roe algebras, then $X$ and $Y$ are coarsely equivalent.  Before this solution was found, several partial answers were obtained under some geometric conditions on the spaces. Its first partial solution was proven in \cite[Theorem 4.1]{SpakulaWillett2013AdvMath} under the assumption of the metric spaces having Yu's property A. This result was then strengthened in \cite[Theorem 6.1]{BragaFarah2018Trans} and proven to hold for metric spaces satisfying a technical condition on the ideal of ghost operators. The authors of \cite{LiSpakulaZhang2020} then observed that this technical condition could be weakened further (see \cite[Corollary 3.9]{LiSpakulaZhang2020}).\footnote{To the best of our knowledge, it is currently not know if the condition of \cite[Corollary 3.9]{LiSpakulaZhang2020} is actually \emph{strictly} weaker than the previously studied conditions. We provide a more detailed discussion of this question below. } 

These notes concern characterizations of the technical properties on the ideal of ghost operators mentioned above. We start by recalling those properties: given a metric space $(X,d)$, a subspace $X'\subseteq X$ is \emph{sparse} if $X'=\bigsqcup_nX_n$, where each $X_n$ is finite and $d(X_n,X_m)\to \infty$ as $n+m\to \infty$. A projection $p\in \cstu(X)$ is called a \emph{block projection} if there is a disjoint sequence $(X_n)_n$ of finite (non-empty) subsets of $X$ such that 
\[
p=\SOTh\sum_n\chi_{X_n}p\chi_{X_n}
\]
where $\chi_{X_n}$ is the orthogonal projection onto $\ell_2(X_n)$. If, moreover, each $\chi_{X_n}p\chi_{X_n}$ has rank 1, then $p$ is   called a \emph{block-rank-one projection}.
 
We can now formally state the aforementioned partial answers to the rigidity problem. Precisely, after the rigidity problem was solved for metric spaces with property A (\cite[Theorem 4.1]{SpakulaWillett2013AdvMath}), positive solutions were obtained for metric spaces satisfying the following properties:
\begin{enumerate} 
\item[(I)] If $X'\subseteq \cstu(X)$ is sparse, all ghost projections in $\cstu(X')$ are compact (\cite[Theorem 6.1]{BragaFarah2018Trans}).
\item[(II)] There are no block-rank-one ghost projections in $\cstu(X)$ (\cite[Corollary 3.9]{LiSpakulaZhang2020}).\footnote{Notice that the requirement of being block-rank-one implies the projection is not compact automatically.} 
\end{enumerate} 
It is immediate that (I) implies (II), however the question whether these two properties are equivalent is closely related to a tantalizingly innocent-looking question about the structure of uniform Roe algebras (see Question~\ref{Q1}). 

In these notes, we characterize each of the properties (I) and (II) in terms of natural weakenings of the operator norm localization property (ONL) --- a property known to be equivalent to property A (\cite[Theorem 4.1]{Sako2014} and \cite[Proposition 3.2]{BragaFarahVignati2020AnnInstFour}). We then use this characterization to obtain new rigidity results and new information about the Cartan masas of uniform Roe algebras; the latter provides a novel partial answer to a question of White and Willett (see Theorem \ref{Thm.CartanCosep}). 

Let us describe our main results. We start by recalling the definition of the operator norm localization property. While our definition is nonstandard, it is not difficult to see that it is equivalent to the usual one. The reason for our choice of a nonstandard definition will be clear in Definition \ref{DefinitionONLFamilies} below. If $(X,d)$ is a metric space and $a\in\mathcal B(\ell_2(X))$, the \emph{propagation} of $a$ is the quantity
\[
\propg(a)=\sup\{d(x,y)\mid \langle a\delta_x,\delta_y\rangle\neq 0\},
\]
 $(\delta_x)_{x\in X}$ being the canonical basis for $\ell_2(X)$.
\begin{definition}\label{DefinitionEquiApproxONL}
Let $(X,d)$ be a metric space. 
\begin{enumerate}
\item A family $S\subseteq \cstu(X)$ is \emph{equi-approximable} if for all $\eps>0$ there is $s>0$ such that for all $a\in S$ there is $b\in\cB(\ell_2(X))$ with $\propg(b)\leq s$ and $\|a-b\|\leq \eps$.
\item We say that $X$ has the \emph{operator norm localization property} (\emph{ONL}) if for all $\eps>0$ and all equi-approximable $S\subseteq \cstu(X)$, there is $s>0$ such that for all $a\in S$ there is a unit vector $\xi\in \ell_2(X)$ such that 
\[
\mathrm{diam}(\supp(\xi))\leq s\ \text{ and }\ \|a\xi\|\geq (1-\eps)\|a\|.
\]
\end{enumerate}
\end{definition} 
 
Restricting to families satisfying extra conditions, we obtain a family of natural weakenings of the ONL.
 
\begin{definition}\label{DefinitionONLFamilies}
Let $(X,d)$ be a metric space and let $\cP$ be a property of operators. We say that $X$ has the \emph{operator norm localization for $\cP$} (\emph{ONL for~$\cP$}) if for all $\eps>0$ and all equi-approximable families $S\subseteq \cstu(X)$ of operators satisfying $\cP$, there is $s>0$ such that for all $a\in S$ some unit vector $\xi\in \ell_2(X)$ satisfies $\mathrm{diam}(\supp(\xi))\leq s$ and $\|a\xi\|\geq (1-\eps)\|a\|$.
\end{definition} 

In the following two theorems, we characterise properties (I) and (II) stated above. 
The following are proven as Theorems \ref{ThmBlockProjIFFONLproj} and \ref{ThmBlockRank1ProjIFFONLproj}.  They are natural analogs of the well-known fact (see \cite{RoeWillett2014} and \cite{Sako2014}) that the classical version of ONL is equivalent to the non-existence of non-compact ghost projections.

\begin{theorem}\label{ThmBlockProjIFFONLprojINTRO}
Let $(X,d)$ be a uniformly locally finite metric space. The following are equivalent:
\begin{enumerate}
\item 
There are no non-compact block ghost projections in $\cstu(X)$. 
\item
If $X'\subseteq \cstu(X)$ is sparse, then all ghost projections in $\cstu(X')$ are compact.
\item
$X$ has ONL for equi-approximable finite-rank projections.
\end{enumerate}
\end{theorem}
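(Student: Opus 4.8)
The plan is to establish the three equivalences by proving $(3)\Rightarrow(1)$, then $(1)\Leftrightarrow(2)$, and finally $(1)\Rightarrow(3)$, with the last being by far the substantial direction. Throughout I write $C_s:=\sup_{x}\card{B(x,s)}<\infty$ (uniform local finiteness) and use the elementary observation that a block projection $p=\SOTh\sum_n\chi_{X_n}p\chi_{X_n}$ is exactly an orthogonal direct sum $\bigoplus_n p_n$ of finite-rank projections $p_n=\chi_{X_n}p\chi_{X_n}$ on the disjoint finite sets $X_n$. Consequently $p$ is a ghost exactly when $\max_{x,y}\abs{(p_n)_{xy}}\to 0$ as $n\to\infty$, and it is non-compact exactly when infinitely many $p_n$ are nonzero.

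For $(3)\Rightarrow(1)$ I argue by contraposition. Given a non-compact block ghost projection $p=\bigoplus_n p_n$, the family $S=\{p_n:p_n\neq 0\}$ consists of finite-rank projections and is equi-approximable: if $\propg(b)\le s$ and $\norm{p-b}\le\eps$, then each $\chi_{X_n}b\chi_{X_n}$ has propagation $\le s$ and $\norm{p_n-\chi_{X_n}b\chi_{X_n}}\le\eps$, so a single $s$ works for all of $S$. If ONL for finite-rank projections held, one scale $s$ would produce, for every $n$, a unit vector which upon compressing to $X_n$ and renormalising yields $\eta_n$ supported on a set of diameter $\le s$ (hence on $\le C_s$ points) with $\norm{p_n\eta_n}\ge 1-\eps$. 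A crude estimate of $\norm{p_n\eta_n}^2=\langle p_n\eta_n,\eta_n\rangle$ then forces $\max_{x,y}\abs{(p_n)_{xy}}\ge (1-\eps)^2/C_s$ for all such $n$; since the $X_n$ eventually avoid every finite set, this contradicts the ghost condition.

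For $(1)\Leftrightarrow(2)$, both directions go by contraposition. For $\neg(1)\Rightarrow\neg(2)$, given a non-compact $p=\bigoplus_n p_n$ with infinitely many nonzero blocks, I extract a sparse subsequence greedily: having chosen $X_{n_1},\dots,X_{n_k}$, their union is finite and meets only finitely many blocks, so one can pick $n_{k+1}$ with $d\bigl(X_{n_{k+1}},\bigcup_{i\le k}X_{n_i}\bigr)\ge k$; then $X'=\bigsqcup_k X_{n_k}$ is sparse and $\bigoplus_k p_{n_k}$ is an infinite-rank ghost projection in $\cstu(X')$. For $\neg(2)\Rightarrow\neg(1)$, given sparse $X'=\bigsqcup_n X_n$ and a non-compact ghost projection $q\in\cstu(X')$, I pass to the block-diagonal compression $E(q)=\bigoplus_n\chi_{X_n}q\chi_{X_n}$; approximating $q$ by a finite-propagation operator and noting that sparseness leaves only finitely many off-block pairs below any fixed propagation, one gets that $q-E(q)$ is compact. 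Hence the blocks satisfy $\norm{q_n^2-q_n}\to 0$, and replacing each $q_n$ by the spectral projection $\chi_{(1/2,\infty)}(q_n)$ for large $n$ (and by a nearby genuine projection in the finitely many remaining cases) produces a block projection $p$ with $p-q$ compact; thus $p$ is a non-compact block ghost projection.

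Finally, $(1)\Rightarrow(3)$ is the crux, proved as $\neg(3)\Rightarrow\neg(1)$. Failure of ONL for finite-rank projections yields $\eps>0$ and an equi-approximable family $S$ of finite-rank projections such that for every scale $s$ some $a_s\in S$ satisfies $\norm{\chi_F a_s\chi_F}\le(1-\eps)^2$ for every set $F$ of diameter $\le s$ (delocalisation). Equi-approximability supplies the key uniform decay: for each $\eps'$ there is $t(\eps')$ with $\abs{(a)_{xy}}\le\eps'$ for all $a\in S$ whenever $d(x,y)>t(\eps')$. I then plan to manufacture from the witnesses $a_s$ (with $s\to\infty$) a single non-compact block ghost projection, following the template of the classical equivalence between ONL and the absence of non-compact ghost projections (\cite{RoeWillett2014,Sako2014}): combine the delocalised witnesses across scales, pass to a suitable limit and an associated spectral projection to obtain a genuine projection, arrange the pieces on a sparse sequence of regions escaping to infinity so that the result is block-diagonal, and retain infinitely many nonzero pieces for non-compactness. \emph{The main obstacle is producing blocks whose matrix entries tend to $0$}, i.e.\ the ghost condition: a single delocalised witness does not suffice, since its entries are only controlled by $(1-\eps)^2$ rather than by a vanishing quantity, so one must genuinely combine infinitely many witnesses and exploit the uniform decay above to force the local matrix entries of the limiting projection down to zero while keeping its global norm equal to $1$.
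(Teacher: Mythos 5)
Your arguments for (3)$\Rightarrow$(1) and for (1)$\Leftrightarrow$(2) are correct and close in content to the paper's own proof of Theorem~\ref{ThmBlockProjIFFONLproj}: for (3)$\Rightarrow$(1) you verify equi-approximability of the blocks directly by compression, where the paper invokes Claim~\ref{Claim3.7} together with Lemma~\ref{LemmaUnifApprox}; and your hands-on treatment of $\neg(2)\Rightarrow\neg(1)$ (block-diagonal compression modulo compacts, then functional calculus on the blocks) replaces the paper's passage to $\bigl(\prod_n\cK(\ell_2(X_n))\bigr)/\bigoplus_n\cK(\ell_2(X_n))$ and the projection-lifting lemma \cite[Lemma~3.1.13]{Fa:STCstar}. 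These are interchangeable.

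The genuine gap is in (1)$\Rightarrow$(3), which you present only as a plan, and the obstacle you flag at the end is not a technicality: it is the entire mathematical content of this direction, and the tool you propose for overcoming it cannot do the job. Concretely, for a projection $a$ one has $a_{xx}=\norm{a\delta_x}^2$ and $\abs{a_{xy}}\le\norm{a\delta_x}\,\norm{a\delta_y}$, so producing blocks whose entries tend to $0$ is \emph{equivalent} to producing witnesses whose column norms $\sup_x\norm{a\chi_x}$ tend to $0$. Norm-delocalisation only caps the columns at $1-\eps$, and the ``uniform decay'' you extract from equi-approximability, namely $\abs{a_{xy}}\le\eps'$ whenever $d(x,y)>t(\eps')$, controls only entries far from the diagonal; it says nothing about diagonal entries, nor about entries $a_{xy}$ with $x,y$ close to each other but escaping to infinity --- and these are precisely the entries the ghost condition must kill. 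So no combination of delocalised witnesses using that decay alone forces the ghost condition.

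What is missing is a statement converting norm-delocalisation into small columns, and this is exactly the paper's Lemma~\ref{LemmaSortOfONLForAnySpace} (cited from \cite[Lemma~5.4]{BaudierBragaFarahKhukhroVignatiWillett2021uRaRig}): if a projection $p$ is $\gamma$-approximable at controlled propagation and has a single column with $\norm{p\chi_x}\ge\delta$, then $p$ is already $(\eps,W)$-normed for a controlled $W$. Its contrapositive, packaged as Lemma~\ref{LemmaSortONLForEquiApproxFamOfProj}, shows that failure of ONL for an equi-approximable family of finite-rank projections forces $\inf_p\sup_x\norm{p\chi_x}=0$; only from such witnesses does your intended construction (compress to disjoint finite sets, correct by functional calculus, take the $\mathrm{SOT}$-sum) go through, since then all entries are dominated by the vanishing column norms --- this is the paper's Lemma~\ref{LemmaGeoCondForEquiApproxAreBoundedBelow}. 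Your appeal to the ``classical template'' of \cite{RoeWillett2014,Sako2014} does not substitute for this step: those results are routed through property A, start from a stronger failure hypothesis than failure of ONL restricted to equi-approximable projection families, and do not, as stated, output \emph{block} ghost projections. To complete the proof you must either prove or cite a result of the type of Lemma~\ref{LemmaSortOfONLForAnySpace}.
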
 

\begin{theorem}\label{ThmBlockRank1ProjIFFONLprojINTRO}
Let $(X,d)$ be a uniformly locally finite metric space. The following are equivalent:
\begin{enumerate}
\item There are no non-compact block rank one ghost projections in $\cstu(X)$. 
\item $X$ has ONL for equi-approximable rank-one projections.
\end{enumerate}
\end{theorem}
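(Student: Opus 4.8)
The plan is to prove both implications through the dictionary that a rank-one projection is the projection onto a unit vector $\zeta$, that such a projection is a ghost precisely when $\zeta$ is \emph{flat} (i.e.\ $\norm{\zeta}_\infty$ small), and that failure of localization for it means that no vector of small support-diameter captures most of $\zeta$.

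For $(2)\Rightarrow(1)$, the easy direction, I would argue by contradiction. Suppose $p=\SOTh\sum_n p_n$ is a non-compact block rank one ghost projection, each $p_n$ being the rank-one projection onto a unit vector $\zeta_n$ supported on the finite block $X_n$ (the $X_n$ disjoint and infinitely many, as $p$ is non-compact). First I would note that $\{p_n\}_n$ is equi-approximable: given $\eps$, pick $b$ with $\propg(b)\le s$ and $\norm{p-b}\le\eps$; then $\chi_{X_n}b\chi_{X_n}$ has propagation $\le s$ and approximates $p_n=\chi_{X_n}p\chi_{X_n}$ to within $\eps$, uniformly in $n$. Applying ONL for rank-one projections with $\eps=\tfrac12$ yields a scale $s$, and by uniform local finiteness every set of diameter $\le s$ has at most some fixed number $N$ of points. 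Since $p$ is a ghost, the diagonal entries $\abs{\zeta_n(x)}^2$ are small off a finite set $F$; as only finitely many blocks meet $F$, I may fix $n$ with $X_n\cap F=\emptyset$ and $\norm{\zeta_n}_\infty<1/(2\sqrt N)$. The localizing unit vector $\xi$ for $p_n$ then satisfies $\tfrac12\le\norm{p_n\xi}=\abs{\ang{\xi,\zeta_n}}\le\norm{\zeta_n\rs\supp\xi}\le\sqrt N\,\norm{\zeta_n}_\infty<\tfrac12$, a contradiction.

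For $(1)\Rightarrow(2)$ I would prove the contrapositive, constructing a non-compact block rank one ghost projection from a failure of ONL for rank-one projections. Such a failure provides $\eps>0$, an equi-approximable family $S$ of rank-one projections, and for every scale $s$ a member whose unit vector is \emph{$s$-spread}: no set of diameter $\le s$ carries $(1-\eps)$ of its mass. From equi-approximability I would first extract the off-diagonal decay $\abs{\zeta(x)\zeta(y)}\le\delta$ whenever $d(x,y)>t(\delta)$, valid for the vector $\zeta$ of every member of $S$ (matrix entries are dominated by the operator norm of the error). Fixing $\delta_k\downarrow0$, writing $t_k=t(\delta_k)$, and choosing spread vectors $\zeta_k$ at scales $s_k\ge t_k$ with $s_k\to\infty$, the \emph{core} $L_k=\{x:\abs{\zeta_k(x)}>\sqrt{\delta_k}\}$ has diameter $\le t_k\le s_k$, hence by spreading carries mass $<(1-\eps)^2$, while the flat tail $\zeta_k\rs L_k^c$ has $\norm{\cdot}_\infty\le\sqrt{\delta_k}$ and mass $>\eps$.

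The heart of the argument, and the step I expect to be the main obstacle, is to turn these flat tails into \emph{disjointly supported} finite blocks, reconciling flatness (needed to make the final projection a ghost) with equi-approximability (needed to keep it inside $\cstu(X)$) and with pairwise disjointness of supports. The decisive observation is that a flat vector escapes every fixed finite set: since $\zeta_k\rs L_k^c$ has $\norm{\cdot}_\infty\le\sqrt{\delta_k}$, its mass on any fixed finite $H$ is at most $\card{H}\delta_k\to0$, so for all large $k$ the tail retains mass $>\eps/2$ off $H$. This lets me select recursively a subsequence and pairwise disjoint finite sets $G_k\subseteq L_k^c$ on which $\zeta_k$ has mass $\ge\eps/4$; normalizing $\zeta_k\rs G_k$ yields unit vectors $\theta_k$ with $\norm{\theta_k}_\infty\to0$ on disjoint finite blocks. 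Setting $p=\SOTh\sum_k\theta_k\otimes\theta_k$, flatness makes $p$ a ghost, infinitude of blocks makes it non-compact, and membership in $\cstu(X)$ follows because equi-approximability is inherited by compression: since $\theta_k\otimes\theta_k=\chi_{G_k}(\zeta_k\otimes\zeta_k)\chi_{G_k}/\norm{\zeta_k\rs G_k}^2$, compressing the approximants of the $\zeta_k\otimes\zeta_k\in S$ to the disjoint blocks $G_k$ produces, for each target error, operators of uniformly bounded propagation approximating $p$ (cross-block entries vanish, so the compressed pieces assemble into an operator of the same propagation). This $p$ contradicts (1). The remaining routine points --- that the recursive selection can be carried out with the scales $s_k$ and the $\delta_k$ chosen compatibly, and the exact bookkeeping of the compressed approximants --- I would verify directly.
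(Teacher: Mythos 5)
Your proof is correct, and while your $(2)\Rightarrow(1)$ direction matches the paper's (the paper shows the blocks $\chi_{X_n}p\chi_{X_n}$ are equi-approximable via Claim 3.7 and \cite[Lemma 4.9]{BragaFarah2018Trans}, where you use the slightly more direct compression-of-a-single-approximant argument, then both derive the same $\sqrt{N}\,\|\zeta_n\|_\infty$ contradiction), your $(1)\Rightarrow(2)$ direction is genuinely different. The paper factors this implication through two lemmas: Lemma \ref{LemmaGeoCondForEquiApproxAreBoundedBelow}, which shows that absence of ghost block-rank-one projections forces the pointwise lower bound $\inf_{p\in\cP}\sup_x\|p\chi_x\|>0$ (with a functional-calculus step to correct near-projections into honest projections), and Lemma \ref{LemmaSortONLForEquiApproxFamOfProj}, which upgrades that lower bound to ONL by invoking the quantitative localization result Lemma \ref{LemmaSortOfONLForAnySpace}, imported as a black box from \cite[Lemma 5.4]{BaudierBragaFarahKhukhroVignatiWillett2021uRaRig}. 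You instead prove the contrapositive directly: from a failure of ONL you exploit the rank-one structure --- matrix entries factor as $\zeta(x)\overline{\zeta(y)}$, so equi-approximability forces the ``core'' $\{|\zeta|>\sqrt{\delta}\}$ to have diameter at most $t(\delta)$ --- to produce fat, flat tails, which you disjointify and reassemble into a ghost block-rank-one projection; membership in $\cstu(X)$ follows from your compression argument, and no functional calculus is needed since compressions of rank-one projections renormalize to exact rank-one projections. What each approach buys: yours is self-contained and elementary, avoiding the external quantitative lemma entirely; the paper's route is uniform in the rank --- the same two lemmas prove the finite-rank Theorem \ref{ThmBlockProjIFFONLprojINTRO}, where your core-diameter argument breaks down because entries of higher-rank projections do not factor --- and it isolates the intermediate condition $\inf_p\sup_x\|p\chi_x\|>0$, which the paper reuses in its rigidity applications, besides extending one implication to general u.l.f.\ coarse spaces and to the sparse-subspace reformulations of Theorem \ref{ThmBlockRank1ProjIFFONLproj}.
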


Theorems \ref{ThmBlockProjIFFONLprojINTRO} and \ref{ThmBlockRank1ProjIFFONLprojINTRO} show that weak requirements on the ideal of ghost operators are already strong enough to force the space to satisfy ONL in some sense. This may provide an approach to extending results previously known to hold only under ONL to more general spaces. 
 
We point out that the equivalent conditions in Theorem \ref{ThmBlockRank1ProjIFFONLprojINTRO} above are also equivalent to ``$X$ contains no sparse subspaces consisting of ghostly measured asymptotic expanders'' (see \cite[Corollary C]{LiSpakulaZhang2020}), and thus can be geometrically characterized.  We do not know of a geometric characterization of the conditions in Theorem \ref{ThmBlockProjIFFONLprojINTRO}.

\subsection{Application to rigidity}

Our first application is to rigidity of uniform Roe algebras of coarse (not necessarily metrizable) spaces. Indeed, coarse spaces generalize the concept of metric spaces in the context of coarse geometry. We refer the reader to \S\ref{SectionPrelim} for precise definitions. For now, we simply say that a coarse space is a set $X$ together with a family $\cE$ of subsets of $X\times X$ which gives a notion of uniform boundedness of families of subsets of $X$. The uniform Roe algebra of $(X,\cE)$ is, as in the case when $X$ is metric, denoted by $\cstu(X)$. While \cite[Theorem 1.2]{BaudierBragaFarahKhukhroVignatiWillett2021uRaRig} showed that if $(X,d)$ and $(Y,\partial)$ are uniformly locally finite metric spaces, then $X$ and $Y$ must be coarsely equivalent provided that $\cstu(X)$ and $\cstu(Y)$ are isomorphic, the same problem remains open if $(X,\cE)$ and $(Y,\cF)$ are uniformly locally finite \emph{coarse} spaces.

The next result was only known to hold in case of one of the spaces of interest has property A (see \cite[Theorem 1.3]{BragaFarahVignati2020AnnInstFour}).

\begin{theorem}\label{Thm.PreservationMetric}
Let $(X,\cE)$ and $(Y,\cF)$ be uniformly locally finite coarse spaces with $\cstu(X)\cong\cstu(Y)$. If $X$ is metrizable and $\cstu(X)$ has no block-rank-one ghost projections, then $(X,\cE)$ and $(Y,\cF)$ are coarsely equivalent. In particular, $(Y,\cF)$ is metrizable.

Moreover, a coarse equivalence is given by some (any) $g\colon Y\to X$ for which some $\delta>0$ satisfies $\|\Phi^{-1}(\chi_{y})\delta_{g(y)}\|>\delta$ for all $y\in Y$. 
\end{theorem}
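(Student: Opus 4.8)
The plan is to recover a coarse equivalence $g\colon Y\to X$ directly from the isomorphism $\Phi\colon\cstu(X)\to\cstu(Y)$, following the strategy used when one of the spaces has property A (as in \cite[Theorem~1.3]{BragaFarahVignati2020AnnInstFour}) but feeding in only the one-sided localization available here.

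First I would set $p_y:=\Phi^{-1}(\chi_y)$ for $y\in Y$. Since the $\chi_y$ are pairwise orthogonal rank-one projections summing (SOT) to the identity, so are the $p_y$; hence $p_y=\eta_y\eta_y^*$ for a unit vector $\eta_y\in\ell_2(X)$, and $\{\eta_y\mid y\in Y\}$ is an orthonormal basis of $\ell_2(X)$. The first key input is that $\{p_y\}$ is equi-approximable in $\cstu(X)$: the family $\{\chi_y\}$ has propagation $0$, and a $*$-isomorphism of uniform Roe algebras carries equi-approximable families to equi-approximable families (the uniform version of the standard fact that such isomorphisms are approximately propagation-preserving on single operators). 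Granting this, $X$ is metrizable with no block-rank-one ghost projections, so Theorem \ref{ThmBlockRank1ProjIFFONLprojINTRO} gives that $X$ has ONL for equi-approximable rank-one projections. Applying it to $\{p_y\}$ with a small $\eps$ produces a single radius $s$ so that every $\eta_y$ carries mass at least $1-\eps$ on some set $E_y$ with $\diam(E_y)\le s$. Uniform local finiteness of $X$ bounds $|E_y|$, so some point of $E_y$ carries mass $>\delta$ for a uniform $\delta>0$; any choice of $g(y)$ with $\|\Phi^{-1}(\chi_y)\delta_{g(y)}\|=|\eta_y(g(y))|>\delta$ then lies in $E_y$ (a single tail coordinate has mass $\le\sqrt\eps<\delta$), which is exactly the selection in the ``Moreover'' clause.

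Next I would verify the two embedding halves of coarse equivalence, each by approximating a single image operator by one of finite propagation. For bornologousness, reduce a controlled $F\in\cF$ to a partial bijection, let $v\in\cstu(Y)$ be the associated partial isometry, and approximate $\Phi^{-1}(v)$ within $\delta^2/4$ by an operator of finite propagation $t$ in $X$; since $\Phi^{-1}(v)$ carries $\eta_{y'}$ to a unimodular multiple of $\eta_y$ for $(y,y')\in F$, comparing masses at $g(y)$ forces $d(g(y),g(y'))\le s+t$, so $(g\times g)(F)\in\cE$. For effective properness I would use the identity $|\langle\Phi(a)\delta_{y'},\delta_y\rangle|=|\langle a\eta_{y'},\eta_y\rangle|$, valid for all $a\in\cstu(X)$ because $\chi_y\Phi(a)\chi_{y'}$ is the image of the rank-one operator $p_y a p_{y'}$. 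Taking $a=N_R$, the $R$-ball counting operator of propagation $R$, the right-hand side is $\gtrsim\delta^2$ whenever $d(g(y),g(y'))\le R$, while approximating $\Phi(N_R)$ by a finite-propagation operator in $Y$ shows the left-hand side can be that large only on a controlled set; hence $(g\times g)^{-1}(E)\in\cF$ for every $E\in\cE$. In particular $g$ is a coarse embedding, and combining effective properness with uniform local finiteness of $Y$ shows $g$ is boundedly finite-to-one on balls.

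The main obstacle is coboundedness, where the asymmetry of the hypothesis (localization only in $X$) bites, and I would argue as follows. Truncate each $\eta_y$ to $\tilde\eta_y:=\chi_{E_y}\eta_y/\|\chi_{E_y}\eta_y\|$, a genuinely $E_y$-supported unit vector with $\|\eta_y-\tilde\eta_y\|=O(\sqrt\eps)$, and set $\tilde P:=\sum_y\tilde\eta_y\tilde\eta_y^*$, a positive operator of propagation $\le s$. The summands $\eta_y\eta_y^*-\tilde\eta_y\tilde\eta_y^*$ have norm $O(\sqrt\eps)$ and are supported on $E_y\times E_y$; two indices interact only when $d(g(y),g(y'))\le 2s$, which by effective properness and uniform local finiteness of $Y$ happens for boundedly many $y'$. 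A coloring/almost-orthogonality estimate then bounds $\|\tilde P-\sum_y\eta_y\eta_y^*\|=\|\tilde P-I\|$ by $C\sqrt\eps$, so for $\eps$ small $\tilde P\ge\tfrac12 I$; in particular $\langle\tilde P\delta_x,\delta_x\rangle\ge\tfrac12$ for every $x\in X$, forcing some $y$ with $x\in E_y$ and hence $d(x,g(y))\le s$. Thus $g(Y)$ is $s$-dense, $g$ is a coarse equivalence, and since $X$ is metrizable and metrizability is a coarse invariant, $Y$ is metrizable as well. I expect this coboundedness step, together with the equi-approximability input of the first paragraph, to carry essentially all the difficulty; the two embedding estimates are routine single-operator approximations.
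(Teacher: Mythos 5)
There is a genuine gap, and it sits exactly where you predicted the difficulty would be: coboundedness. Your key estimate $\|\tilde P-I\|\le C\sqrt\eps$ does not follow from the ingredients you invoke, and in fact it cannot follow from them, because it is itself (a quantitative form of) the density statement you are trying to prove. Since each $\tilde\eta_y$ is supported in $E_y$, any point $x\notin\bigcup_y E_y$ satisfies $\tilde P\delta_x=0$, hence $\|\tilde P-I\|\ge 1$; so any proof of the estimate must already rule out uncovered points. Your local ingredients do not do this: the support claim ``the summands $\eta_y\eta_y^*-\tilde\eta_y\tilde\eta_y^*$ are supported on $E_y\times E_y$'' is false (the tail $\rho_y=(1-\chi_{E_y})\eta_y$ is global), and by Parseval $\bigl\|\sum_y\rho_y\rho_y^*\bigr\|\ge\langle\sum_y\rho_y\rho_y^*\,\delta_x,\delta_x\rangle=\sum_y|\eta_y(x)|^2=1$ at any uncovered $x$. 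Moreover, concentration plus completeness of the ONB genuinely does not imply coverage: starting from the unilateral shift and performing an infinite rotation cascade ($v_j=\cos\theta\,f_j+\sin\theta\,w_{j-1}$, $w_j=-\sin\theta\,f_j+\cos\theta\,w_{j-1}$) one obtains an orthonormal \emph{basis} of $\ell_2(\N)$ in which every vector has mass $\ge 1-O(\theta^2)$ on a singleton, the singletons are pairwise distinct, yet none of them is $\{0\}$; for this family all of your hypotheses (norm-$O(\sqrt\eps)$ summands, disjoint cores, exact orthonormality) hold while $\|\tilde P-I\|=1$. So surjectivity of $\Phi$ must enter in an essential, non-local way. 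This is precisely what the paper's proof supplies and your proposal omits: the paper obtains coboundedness from the reverse map $f\colon X\to Y$ given by Corollary~\ref{CorRigPaper} (which rests on the hard Lemma~\ref{LemmaCorRigPaper}, i.e.\ \cite[Corollary 3.3]{BaudierBragaFarahKhukhroVignatiWillett2021uRaRig}), and it deduces the coarse equivalence of the spaces by invoking the full rigidity theorem \cite[Theorem 1.2]{BaudierBragaFarahKhukhroVignatiWillett2021uRaRig} once $Y$ is known to be metrizable. That difficulty does not disappear in your approach; it is simply hidden inside an unprovable inequality.

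Two further points. First, your effective-properness step also fails as written: with $N_R$ the all-ones band kernel, $\langle N_R\eta_{y'},\eta_y\rangle=\sum_{d(x,x')\le R}\overline{\eta_y(x)}\,\eta_{y'}(x')$, and phases can cancel --- e.g.\ $\eta_y=(\delta_{x_1}-\delta_{x_2})/\sqrt2$, $\eta_{y'}=(\delta_{x_1}+\delta_{x_2})/\sqrt2$ gives $0$ even though $g(y)$ and $g(y')$ are adjacent --- so the claimed lower bound of order $\delta^2$ is false. This half is repairable: replace $N_R$ by sums of matrix units $e_{g(y)g(y')}$ over separated subfamilies produced by a bounded coloring (so that only finitely many operators, hence only single-operator approximation on the $Y$ side, are needed), or quote Claim~\ref{claim:bigonsmall} together with Lemma~\ref{LemmaTheMapIsCoarse.GenCoarseSp} as the paper does. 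Second, the statement that a $*$-isomorphism carries equi-approximable families to equi-approximable families is not a standard fact you may assume (in this generality it is essentially the rigidity problem); what is actually available, and what the paper uses, is Lemma~\ref{LemmaUnifApprox} applied to the orthogonal projections $(\Phi^{-1}(\chi_y))_{y\in Y}$, which requires strong continuity of $\Phi^{-1}$ and metrizability of $X$. Your first paragraph should be rewritten to rest on that lemma; the coarseness argument via partial isometries in your second paragraph is fine and is a legitimate alternative to the paper's citation of Lemma~\ref{LemmaCor.Expanding}.
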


\subsection{Applications to Cartan masas} The results of this section are concerned with metric spaces and the following notion. 

\begin{definition}\label{Def.Cartan}
Let $A$ be a unital \cstar-algebra and $B\subseteq A$ be a \cstar-subalgebra. We say that~$B$ is a \emph{Cartan masa} in $A$ if
\begin{enumerate}
\item $B$ is a maximal abelian self-adjoint subalgebra (masa) of $A$,
\item $A$ is generated as a \cstar-algebra by the \emph{normalizer} of $B$ in $A$, i.e., 
\[N_{A}(B)=\{a\in A\mid aBa^*\cup a^*Ba\subseteq B\},\]
\item there is a faithful conditional expectation $\Upsilon\colon A\to B$. 
That is, for all $a\in A$ it satisfies the following conditions: 
\begin{enumerate}
\item $\Upsilon\colon A\to B$ is completely positive. 
\item $\Upsilon(b_1ab_2)=b_1\Upsilon(a)b_2$ for all $b_1,b_2$ in $B$. 
\item $\Upsilon(a^*a)=0$ implies $a=0$. 
\end{enumerate}
\end{enumerate}
In the terminology of \cite{WhiteWillett2017}, a Cartan masa $B\subseteq \cstu(X)$ is \emph{co-separable} if there is a countable $S\subseteq \cstu(X)$ such that $\cstu(X)=\mathrm{C}^*(B,S)$. 
\end{definition}

Note that the maximality of $B$ implies $1_A\in B$ (in the non-unital case the definition of a Cartan masa involves an approximate unit). 
It is not difficult to see that for every uniformly locally finite space $X$, $\ell_\infty(X)$ is a Cartan masa in $\cstu(X)$;
the (unique) conditional expectation is defined by (see \S\ref{S.uRA} for the notation $\chi_A$)
\[
E(a)=\sum_{x\in X} \chi_{x} a\chi_{x}. 
\]
Conversely, by the duality result established in \cite[Theorem 4.17]{WhiteWillett2017}, if a unital \cstar-subalgebra of $\cB(\ell_2(X))$ contains $\mathcal{K}(\ell_2(X))$ and has a Cartan masa $B$ isomorphic to~$\ell_\infty(X)$ then $X$ carries a uniformly locally finite coarse structure such that $A$ is naturally isomorphic to $\cstu(X)$, via an isomorphism that sends $B$ to $\ell_\infty(X)$. 
In \cite[Remark 3.4]{WhiteWillett2017}, the authors ask whether every Cartan masa of $\cstu(X)$, where $(X,d)$ is a metric space, is automatically co-separable. 
In other words, by \cite[Theorem 4.17]{WhiteWillett2017} this question asks whether $\cstu(X)$ and $\cstu(Y)$ can be isomorphic for uniformly locally finite spaces $X$ and $Y$ if exactly one of them is a metric space. Until now, the strongest result in this direction was \cite[Theorem~1.12]{BaudierBragaFarahKhukhroVignatiWillett2021uRaRig}. This result asserts that if $\cstu(X)$ and $\cstu(Y)$ are isomorphic, both spaces are uniformly locally finite, and $X$ is metrizable, then $Y$ is countable and it contains a coarse copy of $X$. 

The next result was only known to hold under the additional assumption that $X$ has  property A (see \cite[Theorem 1.3]{BragaFarahVignati2020AnnInstFour}).
Note that the assumption on the absence of ghost projections is strictly weaker than property A \cite[p. 1010]{BragaFarah2018Trans}. 

\begin{theorem}\label{Thm.CartanCosep}
Let $X$ be a uniformly locally finite metric space and assume that $\cstu(X)$ has no block-rank-one ghost projections. If $B\subseteq \cstu(X)$ is a Cartan masa isomorphic to $\ell_\infty(\N)$, then $B$ is co-separable in $\cstu(X)$.
 
 \end{theorem}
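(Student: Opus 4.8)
The plan is to reduce the statement to the already-settled metric case: I would use the duality of \cite[Theorem 4.17]{WhiteWillett2017} to recognise $\cstu(X)$, together with its Cartan masa $B$, as the uniform Roe algebra of a coarse space living on the atoms of $B$, and then invoke Theorem~\ref{Thm.PreservationMetric} to force that coarse space to be metrizable. First I would analyse the atoms of $B$. Since $B$ is a masa in $\cstu(X)$, every corner $pBp$ by a projection $p\in B$ is a masa of $p\cstu(X)p$; applied to a minimal projection $p$ this gives $p\cstu(X)p=\C p$, and because $\mathcal K(\ell_2(X))\subseteq\cstu(X)$ this is possible only if $p$ has rank one in $\cB(\ell_2(X))$. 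Writing $(p_n)_n$ for the (countably many) atoms of $B\cong\ell_\infty(\N)$ and $\xi_n$ for a unit vector spanning the range of $p_n$, the $\xi_n$ form an orthonormal system, and I would show it is an orthonormal basis. Indeed, if some unit vector $\eta$ were orthogonal to every $\xi_n$, then the rank-one projection $e$ onto $\C\eta$ lies in $\mathcal K(\ell_2(X))\subseteq\cstu(X)$ and satisfies $p_ke p_k=0$ for every atom $p_k$ (as $p_k\eta=0$); the module property of the faithful conditional expectation $\Upsilon\colon\cstu(X)\to B$ then gives $p_k\Upsilon(e)p_k=\Upsilon(p_kep_k)=0$ for all $k$, whence $\Upsilon(e)=0$ because the atoms separate points of $\ell_\infty(\N)$, contradicting faithfulness. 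Thus $\{\xi_n\}$ is an orthonormal basis, $X$ is countable, and the unitary sending $\xi_n$ to the $n$-th basis vector of $\ell_2(\N)$ carries $B$ onto the standard diagonal and $\mathcal K(\ell_2(X))$ onto $\mathcal K(\ell_2(\N))$. By \cite[Theorem 4.17]{WhiteWillett2017} this produces a uniformly locally finite coarse structure $\cF$ on $\N$ and an isomorphism $\Phi\colon\cstu(X)\to\cstu(Y)$, with $Y=(\N,\cF)$, such that $\Phi(B)=\ell_\infty(Y)$.

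The crucial step comes next: I would apply Theorem~\ref{Thm.PreservationMetric} to the isomorphism $\cstu(X)\cong\cstu(Y)$. Here $X$ and $Y$ are uniformly locally finite coarse spaces, $X$ is metrizable, and by hypothesis $\cstu(X)$ has no block-rank-one ghost projections, so the theorem guarantees that $(Y,\cF)$ is metrizable as well. This is exactly the point at which the standing hypothesis is consumed, and it is indispensable: for a general (non-metrizable) coarse space $Y$ the diagonal $\ell_\infty(Y)$ need not be co-separable in $\cstu(Y)$, since the controlled sets of $\cF$ need not be cofinal in a countable chain and one then cannot reduce to countably many generators. It is precisely metrizability of the dual space $Y$ that repairs this, and I expect this to be the heart of the matter.

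Once $Y$ is metrizable the remainder is routine. Fixing a metric $\partial$ on $Y=\N$ inducing $\cF$, for each $r\in\N$ the entourage $E_r=\{(x,y):\partial(x,y)\le r\}$ has uniformly bounded fibres; viewing it as a bipartite graph of bounded degree and applying König's edge-colouring theorem decomposes it into finitely many partial matchings, each the graph of a partial translation $u^{(r)}_1,\dots,u^{(r)}_{k_r}\in\cstu(Y)$ of propagation at most $r$. Every operator of propagation at most $r$ is an $\ell_\infty(Y)$-linear combination of these, so, as the finite-propagation operators are norm-dense, the countable set $S_0=\{u^{(r)}_i:r\in\N,\ 1\le i\le k_r\}$ satisfies $\cstu(Y)=\cst(\ell_\infty(Y),S_0)$. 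Pulling back, $S=\Phi^{-1}(S_0)$ is countable and $\cst(B,S)=\Phi^{-1}(\cst(\ell_\infty(Y),S_0))=\Phi^{-1}(\cstu(Y))=\cstu(X)$, so $B$ is co-separable. The only genuinely delicate ingredient is the metrizability of $Y$ supplied by Theorem~\ref{Thm.PreservationMetric}; everything else is bookkeeping around the White--Willett duality and a standard edge-colouring argument.
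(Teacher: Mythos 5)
Your proposal is correct and follows essentially the same route as the paper: apply the White--Willett duality to realise $B$ as the diagonal $\ell_\infty(Y)$ of a u.l.f.\ coarse space $Y$, invoke Theorem~\ref{Thm.PreservationMetric} (using the no-block-rank-one-ghost-projection hypothesis) to conclude that $Y$ is metrizable, and deduce co-separability of $\ell_\infty(Y)$, hence of $B$. The only difference is packaging: where you verify the duality hypotheses by hand (rank-one atoms, orthonormal basis via the faithful expectation) and prove co-separability of the diagonal directly by a partial-translation decomposition, the paper simply cites \cite[Theorem 4.17]{WhiteWillett2017} and \cite[Lemma 4.19]{WhiteWillett2017} for those two steps.
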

 
Still in the topic of Cartan masas, the importance of the question whether a Cartan masa $B\subseteq \cstu(X)$ isomorphic to $\ell_\infty(\N)$ can contain noncompact ghosts to the rigidity problem for uniform Roe algebra has been known for a while (see the paragraph after \cite[Theorem 6.2]{BragaFarah2018Trans}). We show the following:

\begin{theorem}\label{Thm.GhostsAndCosep}
Let $(X,d)$ be a uniformly locally finite metric space and $B\subseteq \cstu(X)$ be a Cartan masa isomorphic to $\ell_\infty(\N)$. The following are equivalent:
\begin{enumerate}
\item\label{Thm.GhostsAndCosep.Item1} All ghost operators in $B$ are compact. 
\item\label{Thm.GhostsAndCosep.Item2} All ghost projections in $B$ are compact. 
\item\label{Thm.GhostsAndCosep.Item3} $B$ is co-separable in $\cstu(X)$.
\end{enumerate} 
\end{theorem}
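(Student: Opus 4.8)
The plan is to establish the two equivalences $(1)\Leftrightarrow(2)$ and $(2)\Leftrightarrow(3)$. A preliminary observation used throughout is that the minimal projections (atoms) of $B$ are rank-one. Writing $B=\overline{\bigoplus}_n\C q_n$ with minimal projections $q_n$, if some $q_n$ had rank $\ge 2$ then $q_n\cB(\ell_2(X))q_n\supseteq q_n\cK(\ell_2(X))q_n$ would contain a self-adjoint $b=q_nbq_n$ which is not a scalar multiple of $q_n$; since $b$ commutes with every atom, hence with all of $B$, and lies in $\cstu(X)\supseteq\cK(\ell_2(X))$ but not in $B$, this contradicts maximality. Thus each $q_n=\xi_n\xi_n^*$ for a unit vector $\xi_n\in\ell_2(X)$, and $B\cap\cK(\ell_2(X))$ is exactly the set of $c_0$-sums of the $q_n$; in particular a projection of $B$ is compact iff it is finite-rank.

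For $(1)\Leftrightarrow(2)$ only $(2)\Rightarrow(1)$ needs an argument. Since the ghost operators form a norm-closed self-adjoint two-sided ideal $\mathfrak G$ and $B$ is a $*$-subalgebra, I may reduce to a positive $a\in B\cap\mathfrak G$: real and imaginary parts of a ghost are ghosts, and for self-adjoint $a$ the positive part $a_+=f(a)$ with $f(t)=\max(t,0)$, $f(0)=0$, is a norm limit of elements $a\,h_k(a)\in\mathfrak G\cap B$, hence lies in $\mathfrak G\cap B$. Write $a=\sum_n\lambda_nq_n$ with $\lambda_n\ge 0$. For $\eps>0$ the spectral projection $p_\eps=\sum_{\lambda_n\ge\eps}q_n$ lies in $B$ (as $B\cong\ell_\infty(\N)$ is closed under bounded Borel functional calculus) and satisfies $0\le p_\eps\le\eps^{-1}a$. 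Then $\langle p_\eps\delta_x,\delta_x\rangle\le\eps^{-1}\langle a\delta_x,\delta_x\rangle\to 0$, and by Cauchy--Schwarz for the positive form $(\eta,\zeta)\mapsto\langle p_\eps\eta,\zeta\rangle$ every matrix coefficient of $p_\eps$ vanishes at infinity, so $p_\eps$ is a ghost projection and hence finite-rank by $(2)$. As $\|a-ap_\eps\|\le\eps$ and $ap_\eps$ is finite-rank, $a$ is compact.

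For $(3)\Rightarrow(2)$ I will invoke the full rigidity theorem \cite[Theorem 1.2]{BaudierBragaFarahKhukhroVignatiWillett2021uRaRig}. By \cite[Theorem 4.17]{WhiteWillett2017} there is a uniformly locally finite coarse structure $\cF$ on $X$ and an isomorphism $\Phi\colon\cstu(X)\to\cstu(X,\cF)$ with $\Phi(B)=\ell_\infty(X)$, the canonical diagonal. Co-separability is isomorphism-invariant, so it transfers to co-separability of $\ell_\infty(X)$ in $\cstu(X,\cF)$; a routine argument (each generator is approximated by finitely-supported operators living over countably many entourages, and a controlled operator over a new entourage cannot be so approximated) shows this forces $\cF$ to be countably generated, hence metrizable by a metric $\rho$. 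Now $\cstu(X)\cong\cstu(X,\rho)$ is an isomorphism between uniform Roe algebras of \emph{metric} spaces, so \cite[Theorem 1.2]{BaudierBragaFarahKhukhroVignatiWillett2021uRaRig} applies: the isomorphism is approximately spatial and therefore carries the ghost ideal of $(X,d)$ onto the ghost ideal of $(X,\rho)$. A ghost projection $p\in B$ is thus sent to a ghost projection in $\ell_\infty(X)$, i.e.\ into the diagonal ghosts of $(X,\rho)$, which coincide with $c_0(X)\subseteq\cK$; hence $\Phi(p)$, and so $p$, is compact.

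The remaining implication $(2)\Rightarrow(3)$ is the crux, and here rigidity is unavailable because one of the two coarse structures may be non-metrizable. I will argue contrapositively: if $B$ is not co-separable, then the structure $\cF$ produced above is not countably generated, hence not metrizable, while $(X,d)$ is metric. Writing each atom as $\Phi^{-1}(\chi_{\{x\}})=\xi_x\xi_x^*$, the failure of countable generation means that the $d$-propagations $\propg(\Phi^{-1}(v_E))$, as $E$ ranges over a cofinal family of entourages of $\cF$ and $v_E$ over controlled partial translations, are unbounded. From this I will extract a $d$-sparse sequence of finite sets $X_n\subseteq X$ with $d(X_n,X_m)\to\infty$ and unit vectors supported on the $X_n$ whose associated block-rank-one projection lies in $B$ and is a ghost with respect to $d$, its coefficients vanishing at infinity precisely because the blocks leave every $d$-ball. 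This yields a non-compact ghost projection in $B$, contradicting $(2)$; geometrically it exhibits the sparse family of ghostly measured asymptotic expanders of \cite[Corollary C]{LiSpakulaZhang2020}, and the construction parallels that of Theorem~\ref{Thm.PreservationMetric}. The main obstacle is exactly this extraction: turning the order-theoretic failure of countable generation of $\cF$ into a single operator that is simultaneously a projection \emph{in} $B$ and a $d$-ghost, which requires controlling how the abstract isomorphism $\Phi$ distorts propagation.
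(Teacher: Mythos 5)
Your preliminary observation (atoms of $B$ have rank one), your proof of (2)$\Rightarrow$(1) (for positive ghost $a\in B$, the spectral projections $p_\eps\le\eps^{-1}a$ are ghosts by Cauchy--Schwarz, hence compact by (2), hence finite rank, and $\|a-ap_\eps\|\le\eps$ gives compactness of $a$), and your proof of (3)$\Rightarrow$(2) (White--Willett duality plus the fact that isomorphisms between uniform Roe algebras of \emph{metric} spaces send ghosts to ghosts) are all correct. In fact (2)$\Rightarrow$(1) is a pleasant direct argument the paper does not give: the paper instead closes the cycle (1)$\Rightarrow$(2)$\Rightarrow$(3)$\Rightarrow$(1), so the equivalence of (1) and (2) is only obtained there through co-separability.

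The genuine gap is (2)$\Rightarrow$(3), and you flag it yourself: your plan is contrapositive --- failure of co-separability forces the White--Willett coarse structure $\cF$ to be non-metrizable, from which you intend to ``extract'' a $d$-sparse block-rank-one ghost projection lying in $B$ --- but the extraction is never carried out, and you concede that controlling how $\Phi$ distorts propagation is exactly the missing step. This is not a routine verification: non-metrizability of $\cF$ is an order-theoretic statement about the entourage poset, and there is no direct way to convert it into a single operator that is simultaneously a projection in $B$ and a ghost for $d$. The paper avoids this entirely by arguing \emph{forward}: apply White--Willett Theorem 4.17 to get a u.l.f.\ coarse space $(Y,\cF)$ and an isomorphism $\Phi\colon\cstu(X)\to\cstu(Y)$ with $\Phi(B)=\ell_\infty(Y)$. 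Hypothesis (2) says that for every infinite $M\subseteq Y$ the projection $\SOTh\sum_{y\in M}\Phi^{-1}(\chi_y)\in B$, being non-compact, is a non-ghost; Lemma \ref{LemmaGGGG} then yields $\inf_y\sup_x\|\Phi^{-1}(\chi_y)\chi_x\|>0$. Combined with equi-approximability of $(\Phi^{-1}(\chi_y))_{y\in Y}$ (Lemma \ref{LemmaUnifApprox}, using metrizability of $X$), Lemma \ref{LemmaSortONLForEquiApproxFamOfProj} gives ONL for this specific family, and the machinery of Lemmas \ref{LemmaCor.Expanding} and \ref{LemmaTheMapIsCoarse.GenCoarseSp}, exactly as in the proof of Theorem \ref{Thm.PreservationMetric}, produces a coarse embedding $g\colon Y\to X$. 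Since $X$ is metrizable, so is $(Y,\cF)$, and White--Willett Lemma 4.19 then gives co-separability of $\ell_\infty(Y)$ in $\cstu(Y)$, hence of $B$ in $\cstu(X)$. The key point your sketch misses is that one never needs a global ghost-projection extraction from $\cF$: ONL is only needed for the single family $(\Phi^{-1}(\chi_y))_{y\in Y}$, and hypothesis (2) applied to its infinite sub-sums supplies precisely that.
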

 
 By \cite[Theorem~1.2]{BaudierBragaFarahVignatiWillett2022vNA}, $L_\infty[0,1]$ does not embed into $\cstu(X)$ and therefore the assumption that $B$ is isomorphic to $\ell_\infty(\bbN)$ can be weakened to $B$ being a von Neumann algebra. 

Using the duality result \cite[Theorem 4.17]{WhiteWillett2017} again, this puts \cite[Theorem 1.12]{BaudierBragaFarahKhukhroVignatiWillett2021uRaRig} in proper context. 
 
\subsection{A word on embeddings} 
Finally, in \S\ref{SectionEmb}, we deal with embeddings between uniform Roe algebras, a study of which was initiated in \cite{BragaFarahVignati2019Comm}. The existence of such embeddings often suffices to guarantee the existence of nontrivial maps $X\to Y$, which in turn imposes restrictions on the geometry of $X$ given by the geometry of $Y$. Using techniques from \cite{BaudierBragaFarahKhukhroVignatiWillett2021uRaRig}, we further develop the theory of embeddings between uniform Roe algebras (see Theorems \ref{ThmEmbCanonicalMASAToMASA} and \ref{ThmEmb1} for details).

\section{Preliminaries}\label{SectionPrelim}

\subsection{Basic definitions}
\subsubsection{Coarse spaces} 
We start by recalling the definition of a coarse space --- we refer the reader to \cite[Chapter~2]{RoeBook} for a detailed treatment of the subject. Loosely speaking, coarse spaces are abstractions of metric spaces which still allow one to talk about large-scale geometry. Precisely, let $X$ be a set and $\cE$ be a family of subsets of $X\times X$. We say that $\cE$ is a \emph{coarse structure on $X$} if
\begin{enumerate}
\item $\Delta_X=\{(x,x)\in X\times X\mid x\in X\}$ belongs to $\cE$,
\item if $E\in \cE$ and $F\subseteq E$, then $F\in \cE$,
\item if $E,F\in \cE$, then $E\cup F\in \cE$, 
\item if $E\in\cE$, then $E^{-1}=\{(y,x)\in X\times X\mid (x,y)\in E\}$
is in~$\cE$ and, 
\item if $E, F\in\cE$, then 
\[
E\circ F=\{(x,y)\in X\times X\mid \exists z\in X, \ (x,z)\in E\text{ and } (z,y)\in F\}
\]
belongs to $\cE$. 
\end{enumerate}
The elements of $\cE$ are called \emph{controlled sets} or \emph{entourages}. The pair $(X,\cE)$ is then called a \emph{coarse space}. Metric spaces have a canonical coarse structure: if $(X,d)$ is a metric space, 
\[
\cE_d=\Big\{E\subseteq X\times X\mid \sup_{(x,y)\in E}d(x,y)<\infty\Big\}
\]
is a coarse structure. Throughout this paper, metric spaces are viewed as coarse spaces with the coarse structure described above. A coarse space $(X,\cE)$ is called \emph{metrizable} when $\cE=\cE_d$ for some metric $d$ on $X$.

Let $(X,\cE)$ and $(Y,\cF)$ be coarse spaces and $f\colon X\to Y$ be a map. We say that $f$ is \emph{coarse} if for all $E\in \cE$ there is $F\in \cF$ such that 
\[
(x,z)\in E\ \text{ implies}\ (f(x),f(z))\in F,
\]
and we say that $f$ is \emph{expanding} if for all $F\in \cF$ there is $E\in \cE$ such that 
\[
(x,z)\not\in E\ \text{ implies } \ (f(x),f(z))\not\in F.
\]
If $f$ is both coarse and expanding, then $f$ is a \emph{coarse embedding}. If $f$ is a coarse embedding and there is $F\in \cF$ such that for all $y\in Y$ there is $x\in X$ with $(f(x),y)\in F$, the map $f$ is called a \emph{coarse equivalence}.

A coarse space $(X,\cE)$ is \emph{uniformly locally finite} (abbreviated as \emph{u.l.f.}) if for all $E\in \cE$ we have that
\[
\sup_{x\in X}|\{z\in X\mid (x,z)\in E\}|<\infty.
\]
For a metric space $(X,d)$, this simply means that for each $r>0$ there is $N>0$ such that every $r$-ball in $X$ has at most $N$ elements. 

\subsubsection{Uniform Roe algebras} \label{S.uRA}
Given a Hilbert space $H$, $\cB(H)$ denotes the space of bounded operators on $H$. Given a set $X$, $\ell_2(X)$ denotes the Hilbert space of square-summable functions $X\to \C$ and we denote its canonical orthonormal basis by $(\delta_x)_{x\in X}$. Given $A\subseteq X$, $\chi_A$ denotes the orthogonal projection onto $\ell_2(A)$. If $x\in X$, we write $\chi_x$ for $\chi_{\{x\}}$.

Given an operator $a\in\cB(\ell_2(X))$ and $x,z\in X$, we let $a_{x,z}=\langle a\delta_z,\delta_x\rangle$; we identify $a$ with the $X\times X$ matrix $[a_{x,z}]_{x,z\in X}$. If $(X,\cE)$ is a coarse space, we say that an operator $a=[a_{x,z}]\in \cB(\ell_2(X))$ has \emph{controlled propagation} if
\[
\mathrm{supp}(a)=\{(x,z)\in X\times X\mid a_{x,z}\neq 0\}
\]
belongs to $\cE$. 
\begin{definition}
Let $(X,\cE)$ be a coarse space. The norm closure of all operators $a\in \cB(\ell_2(X))$ with controlled propagation is the \emph{uniform Roe algebra of $(X,\cE)$}, denoted by $\cstu(X)$.
\end{definition}

We identify $\ell_\infty(X)$ with the set of all operators in $\mathcal B(\ell_2(X))$ diagonalized by the canonical basis. These coincide with the operators $a\in \cB(\ell_2(X))$ such that $\mathrm{supp}(a)\subseteq \Delta_X$. Hence every uniform Roe algebra contains $\ell_\infty(X)\subseteq \cB(\ell_2(X))$. If $(X,\cE)$ is a \emph{connected} coarse space, i.e., if $\{(x,z)\}\in \cE$ for all $x,z\in X$, then $\cstu(X)$ also contains the compact operators; this is always the case if $X$ is metrizable.
 
The definitions of projections in $\cB(\ell_2(X))$ being block projections or block-rank-one projections for coarse spaces are identical to the ones for metric spaces given in the introduction. However, to make sense of sparseness of a subspace $X'\subseteq X$, one needs the coarse structure to be countably generated, which is essentially the same as metrizability of the coarse structure \cite[Section 2.4]{RoeBook}.  Hence, we only talk about projections on sparse subspaces of $X$ when $X$ is metrizable. 

\subsection{Ghosts, equi-approximability, and ONL}
In this subsection, we present some extra technical definitions about uniform Roe algebras. Let $(X,\cE)$ be a u.l.f.\ coarse space. 
An operator $a\in \cstu(X)$ is a \emph{ghost} if for all $\eps>0$ there is a finite $A\subseteq X$ such that $|a_{x,z}|\leq \eps$ for all $x,z\not\in A$. Clearly, compacts operators in $\cstu(X)$ are always ghosts.

We generalise the definitions of equi-approximability and ONL from metric to coarse spaces (cf. Definitions \ref{DefinitionEquiApproxONL} and \ref{DefinitionONLFamilies}).

\begin{definition} 
Let $(X,\cE)$ be a u.l.f.\ coarse space.

\begin{enumerate}
\item Let $\eps>0$, $E\in \cE$, and $a\in \cB(\ell_2(X))$. We say that $a$ is $\eps$-$E$-approximable if there is $b\in \cB(\ell_2(X))$ with $\supp(b)\subseteq E$ such that $\|a-b\|\leq \eps$.
\item A family $S\subseteq\cstu(X)$ is \emph{equi-approximable} if for all $\eps>0$ there is $E\in \cE$ such that each $a\in S$ is $\eps$-$E$-approximable.\end{enumerate}
\end{definition}
 
The next lemma is fundamental for the results of the present paper. It will be used to guarantee that certain families of operators are equi-approximable. By $\SOTh\sum_{n\in M}p_n$ we denote the SOT-limit of finite partial sums, and by using this notation we indicate that this limit exists. 

\begin{lemma}[{\cite[Lemma 4.9]{BragaFarah2018Trans}}]
Let $(X,d)$ be a u.l.f. metric space and let $(p_n)_n$ be a sequence of orthogonal projections such that $\SOTh\sum_{n\in M}p_n$ belongs to $\cstu(X)$ for all $M\subseteq \N$. Then $(\SOTh\sum_{n\in M}p_n)_{M\subseteq \N}$ is equi-approximable.\label{LemmaUnifApprox}
\end{lemma}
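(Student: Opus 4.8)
The plan is to prove the statement by contradiction, building out of a hypothetical failure of equi-approximability a single set $M_\infty\subseteq\N$ with $b_{M_\infty}\notin\cstu(X)$, where $b_M:=\SOTh\sum_{n\in M}p_n$; this contradicts the hypothesis. Write $a:=b_\N\in\cstu(X)$. The argument rests on three elementary facts. (i) Each $b_M$ is positive with $0\le b_M\le a$, and since each $p_n$ is a projection, for every unit vector $\xi$ and every $S\subseteq\N$ one has $\sum_{n\in S}\|p_n\xi\|^2=\langle b_S\xi,\xi\rangle\le\|a\|$, so the tails $\sum_{n>m}\|p_n\xi\|^2$ vanish as $m\to\infty$. (ii) For each fixed $m$ there are only finitely many $M'\subseteq\{1,\dots,m\}$, and each $b_{M'}\in\cstu(X)$; hence for every $\eta>0$ there is a radius $s_m$ such that all of these finitely many operators are $\eta$-approximable by operators of propagation $\le s_m$ --- this is the device that pushes any obstruction to high indices. (iii) If $\propg(b)\le r$ and $\xi,\eta$ are unit vectors with $d(\supp\xi,\supp\eta)>r$, then $\langle b\xi,\eta\rangle=0$, so $\inf\{\|c-b\|:\propg(b)\le r\}\ge|\langle c\xi,\eta\rangle|$ for every such pair; this is how I will certify non-approximability of the operator I construct.

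Assume equi-approximability fails: fix $\eps>0$, radii $r_k\to\infty$, and sets $M_k$ so that $b_{M_k}$ is not $\eps$-approximable by operators of propagation $\le r_k$. First I would argue, using (ii), that the obstruction lives at arbitrarily high indices: for fixed $m$ the part $b_{M_k\cap\{1,\dots,m\}}$ is $\eta$-approximable at the fixed scale $s_m$, so for $k$ large the complementary part $b_{M_k\setminus\{1,\dots,m\}}$ must itself fail approximability at scale $r_k$. Then, using the tail estimate in (i), I would replace this infinite high-index set by a \emph{finite} block $N_k$ of high indices still carrying the failure, up to a controlled error, by discarding those $n$ whose projections act negligibly on the relevant test vectors. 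This yields finite index blocks, lying in arbitrarily high ranges, each responsible for non-approximability at a scale tending to infinity.

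The heart of the proof is a diagonalization. I would construct recursively pairwise disjoint finite blocks $N_1<N_2<\cdots$ (so $\max N_j<\min N_{j+1}$) together with finitely supported witness vectors $\xi_j$, maintaining two balancing conditions. First, before choosing $N_j$ I fix a high threshold so that, by (ii), the earlier blocks $\bigcup_{i<j}N_i$ --- which use only low indices --- contribute less than $\eps/10$ at the large scale $r_{k_j}$. Second, each $N_j$ is chosen with indices high enough that, by the tail estimate in (i), all later blocks act with total mass $<2^{-j}$ on each previously fixed witness $\xi_1,\dots,\xi_j$. Setting $M_\infty=\bigsqcup_j N_j$, the hypothesis gives $b_{M_\infty}\in\cstu(X)$. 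On the other hand, decomposing $b_{M_\infty}=b_{N_j}+b_{\bigcup_{i<j}N_i}+b_{\bigcup_{i>j}N_i}$ and using the two conditions to discard the second and third summands at scale $r_{k_j}$, the witness attached to $N_j$ survives and, via (iii), certifies that $b_{M_\infty}$ is not $(\eps/5)$-approximable by operators of propagation $\le r_{k_j}$. Since $r_{k_j}\to\infty$, this forces $b_{M_\infty}\notin\cstu(X)$, the desired contradiction.

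The main obstacle is exactly the non-monotonicity of approximability under passing to subsets: a sub-sum of equi-approximable positive operators can a priori be far harder to approximate than the full sum, so no direct estimate bounds $\inf\{\|b_M-b\|:\propg(b)\le r\}$ by the corresponding quantity for $a$. The two balancing conditions are engineered precisely to defeat this, rendering the interaction between distinct blocks negligible --- through (ii) for the earlier low-index blocks and through the mass tails of (i) for the later high-index blocks --- so that the isolated bad behaviour of each finite block transplants into the single operator $b_{M_\infty}$. The most delicate point is the extraction in the second paragraph of a finitely supported far-apart witness pair realizing the non-approximability, i.e.\ passing from an operator-norm statement to localized vectors as in (iii); this is where positivity of the $b_M$ and the projection identity in (i) must do the real work, and it is the step I expect to require the greatest care.
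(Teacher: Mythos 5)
The paper does not actually prove this lemma --- it is imported verbatim from \cite[Lemma 4.9]{BragaFarah2018Trans} --- so your attempt must be measured against the argument given there, which is of a completely different nature: it is a compactness/Baire-category argument, not a gliding hump. In outline: write $b_M=\SOTh\sum_{n\in M}p_n$, fix $\eps>0$, and check that each set $F_s=\{M\subseteq\N : b_M \text{ is } \eps\text{-}s\text{-approximable}\}$ is closed in the Cantor space $\mathcal{P}(\N)$; this uses that $M\mapsto b_M$ is continuous into the ball of radius $\|b_\N\|$ with the strong topology, and that the $\eps$-$s$-approximable operators form an SOT-closed subset of each bounded ball (given approximants $c_i$ of propagation $\leq s$, pass to a WOT-convergent subnet and use WOT-lower-semicontinuity of the norm). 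Since $\mathcal{P}(\N)=\bigcup_s F_s$ by hypothesis, the Baire category theorem gives $s_0$, $m$, and $A_0\subseteq\{1,\dots,m\}$ such that $F_{s_0}$ contains every $M$ with $M\cap\{1,\dots,m\}=A_0$; writing $b_M=b_{A_0\cup(M\sm\{1,\dots,m\})}-b_{A_0}+b_{M\cap\{1,\dots,m\}}$ and absorbing the finitely many operators indexed by subsets of $\{1,\dots,m\}$ at a common scale yields equi-approximability. Note that your observation (ii) is exactly this last patching step, and your extraction of finite blocks from an infinite bad set is also repairable by the same WOT-compactness device (if every finite sub-sum were $\delta$-$r$-approximable, so would be their SOT-limit). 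Those parts of your outline are fine.

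The genuine gap is the step you yourself flag as delicate, and it cannot be filled: to transplant the failure of approximability of each finite block $b_{N_j}$ into the single operator $b_{M_\infty}$, you need, for every $j$, unit vectors $\xi_j,\eta_j$ with $d(\supp\xi_j,\supp\eta_j)>r_{k_j}$ and $\left|\langle b_{N_j}\xi_j,\eta_j\rangle\right|$ bounded below \emph{uniformly in $j$}. Your fact (iii) is only the easy implication (such witnesses force non-approximability); what you need is its converse, namely that an operator far in norm from all propagation-$\leq r$ operators must exhibit a large pairing across a gap of size $r$. That converse is a quantitative form of ``quasi-local operators lie in the uniform Roe algebra'', which for general u.l.f.\ spaces is a well-known open problem (it is known under property A, by \v{S}pakula and Zhang, which is useless here since the lemma must hold for all u.l.f.\ spaces). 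The only elementary route --- approximate $b_{N_j}\in\cstu(X)$ by some finite-propagation $c_j$, truncate at scale $r_{k_j}$, and use uniform local finiteness to locate a large far-off-diagonal entry --- gives a lower bound of the form $\eps/K_j$ with $K_j$ depending on the propagation of $c_j$; controlling that propagation uniformly in $j$ is precisely the equi-approximability being proved, so this is circular. Nor can uniformity be sacrificed: if the witness quality $q_j$ tends to $0$, your certificates only show that $b_{M_\infty}$ fails $q_j$-approximability at scale $r_{k_j}$, which is perfectly compatible with $b_{M_\infty}\in\cstu(X)$, since membership only requires, for each \emph{fixed} tolerance, approximability at all sufficiently large scales. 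So no contradiction is reached, and the architecture of the proof --- which funnels everything through these witnesses because the ``later'' blocks can only be controlled on finitely many vectors --- collapses at exactly this point.
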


\begin{definition}
Let $(X,\cE)$ be a coarse space.
 
\begin{enumerate}
\item Given $a\in \cB(\ell_2(X))$, $\eps>0$, and $E\in \cE$, we say that $a$ is \emph{$(\eps,E)$-normed} if there is $A\subseteq X$ with $A\times A\subseteq E$ such that $\|a\chi_A\|\geq (1-\eps)\|a\|$.
 \item We say that $X$ has the \emph{operator norm localization property} (ONL) if for all $\eps>0$ and all equi-approximable $S\subseteq \cstu(X)$, there is $E\in \cE$ such that every $a\in S$ is $(\eps,E)$-normed.
 \item Let $\cP$ be a property of operators. We say that $(X,\cE)$ has the \emph{operator norm localization property for $\cP$} (ONL for $\cP$) if for all $\eps>0$ and all equi-approximable families $S\subseteq \cstu(X)$ of operators satisfying $\cP$, there is $E\in\cE$ such that every $a\in S$ is $(\eps,E)$-normed.
\end{enumerate}
\end{definition}

\subsection{Candidates for coarse equivalences} The content of this subsection will be used for the applications of our main results in \S\ref{SectionCoSep}.
Let $(X,\cE)$ and $(Y,\cF)$ be coarse spaces. Given an isomorphism $\Phi\colon \cstu(X)\to \cstu(Y)$, a natural candidate for a coarse equivalence (or embedding) is a map $f\colon X\to Y$ such that 
\begin{equation}\label{EqfWitRig}\inf_{x\in X}\|\Phi(\chi_x)\delta_{f(x)}\|>0.\tag{$\ast$}
\end{equation} It is therefore necessary to understand when such maps exist. The following is one of the main results of \cite{BaudierBragaFarahKhukhroVignatiWillett2021uRaRig} and it can be extracted from the proof of \cite[Theorem 1.2]{BaudierBragaFarahKhukhroVignatiWillett2021uRaRig}. 

\begin{corollary}\label{CorRigPaper}
Let $(X,\cE)$ and $(Y,\cF)$ be u.l.f.\ coarse spaces and let $\Phi\colon \cstu(X)\to \cstu(Y)$ be an isomorphism. If $(X,\cE)$ is metrizable, then there is $f\colon X\to Y$ satisfying \eqref{EqfWitRig}.
\end{corollary}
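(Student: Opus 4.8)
The plan is to prove Corollary~\ref{CorRigPaper} by extracting the relevant construction from the proof of \cite[Theorem 1.2]{BaudierBragaFarahKhukhroVignatiWillett2021uRaRig}, which produces a coarse equivalence between two u.l.f.\ metric spaces with isomorphic uniform Roe algebras. The key observation is that the \emph{construction} of the candidate map $f$ in that proof does not require full metrizability of both spaces: it only uses metrizability (or more precisely, countable generation of the coarse structure) on the \emph{domain} side, together with u.l.f.\ of both spaces. So I would first isolate, from the cited argument, precisely which hypotheses are needed to build a map $f\colon X\to Y$ satisfying the lower-bound condition \eqref{EqfWitRig}, as opposed to those needed to subsequently \emph{verify} that $f$ is a coarse equivalence.

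First I would recall the standard setup. Since $\Phi\colon \cstu(X)\to\cstu(Y)$ is an isomorphism and both algebras contain the compacts (as $X$ is metrizable hence connected, and one argues $Y$ must contain the compacts too, or one works with the minimal tensor structure), $\Phi$ restricts to an isomorphism of the ideals of compact operators, hence is implemented on rank-one projections up to the usual identifications. The heart of the matter is to show that for each $x\in X$, the image $\Phi(\chi_x)$ is an operator in $\cstu(Y)$ that is ``concentrated'' near a single point $f(x)\in Y$, in the sense that it has a matrix coefficient bounded below along the diagonal entry $(f(x),f(x))$. Concretely, I would choose for each $x$ a point $f(x)\in Y$ maximizing (or nearly maximizing) $\|\Phi(\chi_x)\delta_{y}\|$ over $y\in Y$, and then show the supremum is bounded below uniformly in $x$.

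The key step is the uniform lower bound $\inf_x\|\Phi(\chi_x)\delta_{f(x)}\|>0$. Here one uses that $\Phi(\chi_x)$ is a norm-one projection (a rank-one projection, in fact, since $\chi_x$ is and $\Phi$ preserves the $\ast$-algebra structure and compactness), so $\|\Phi(\chi_x)\|=1$, and that $\Phi(\chi_x)$ lies in $\cstu(Y)$. The u.l.f.\ condition on $Y$ combined with the rank-one-ness controls how ``spread out'' $\Phi(\chi_x)$ can be: a rank-one projection in a uniform Roe algebra whose propagation is (approximately) controlled cannot have its mass distributed over arbitrarily many far-apart points, for otherwise one contradicts either the norm being one or the approximability by finite-propagation operators. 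This is where metrizability of $X$ enters: it guarantees that the family $(\chi_x)_{x\in X}$, and hence $(\Phi(\chi_x))_{x\in X}$, is equi-approximable, which via the machinery around Definition~\ref{DefinitionEquiApproxONL} and Lemma~\ref{LemmaUnifApprox} yields a \emph{uniform} $(\e,E)$-norming estimate, and that uniformity is exactly what upgrades a pointwise positive supremum into a positive \emph{infimum}.

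The main obstacle I anticipate is precisely this uniformity argument: proving that the lower bound on $\|\Phi(\chi_x)\delta_{f(x)}\|$ does not degenerate to $0$ as $x$ ranges over the (possibly uncountable, since only $X$ is assumed metrizable while $Y$ need not be) index set. The naive pointwise choice of $f(x)$ gives a positive value for each $x$ but no uniform bound. The resolution is to invoke the equi-approximability of $(\Phi(\chi_x))_x$ (which follows from metrizability of $X$, since then $\cstu(X)$ is generated by finite-propagation operators indexed by a countably generated $\cE$, making the images uniformly approximable by controlled-propagation operators in $\cstu(Y)$) and feed this into the quantitative estimates of \cite{BaudierBragaFarahKhukhroVignatiWillett2021uRaRig}. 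I would therefore structure the proof as: (i) reduce to the rank-one/compact setting and define $f$ via near-maximizers; (ii) establish equi-approximability of $(\Phi(\chi_x))_x$ from metrizability of $X$; (iii) run the uniform estimate to extract $\delta>0$ with $\|\Phi(\chi_x)\delta_{f(x)}\|>\delta$ for all $x$; citing \cite[Theorem 1.2]{BaudierBragaFarahKhukhroVignatiWillett2021uRaRig} for the technical content of step (iii).
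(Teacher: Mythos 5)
There is a genuine gap, and it lies exactly where your plan puts its weight: steps (ii) and (iii) run the machinery on the wrong side of the isomorphism. Equi-approximability of the family $(\Phi(\chi_x))_{x\in X}$ is a statement about the coarse structure $\cF$ on $Y$: one must produce, for each $\eps>0$, a \emph{single} entourage $F\in\cF$ approximating all $\Phi(\chi_x)$ simultaneously. Metrizability of $X$ gives you nothing here; the only available tool for upgrading ``each sum $\SOTh\sum_{n\in M}p_n$ lies in the uniform Roe algebra'' to equi-approximability is Lemma \ref{LemmaUnifApprox}, and that lemma requires the \emph{ambient} space --- the space on which the projections act, here $Y$ --- to be metrizable, which is precisely what is not assumed. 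Moreover, even if you had equi-approximability in $\cstu(Y)$, your heuristic for step (iii) is false: the expander example in \S\ref{SectionONLEquiApprox} exhibits an equi-approximable family of \emph{rank-one} projections $p_n\in\cstu(X)$ with $\sup_x\|p_n\chi_x\|\to 0$, so ``rank one $+$ u.l.f.\ $+$ uniform approximability'' does not prevent the mass from spreading out. What rules this out is the global hypothesis that the projections are mutually orthogonal and SOT-sum to the identity, via Lemma \ref{LemmaCorRigPaper} (i.e.\ \cite[Corollary 3.3]{BaudierBragaFarahKhukhroVignatiWillett2021uRaRig}) --- and that lemma, too, is only available when the ambient space is metrizable.

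The paper's proof avoids both problems by reversing direction. Since isomorphisms of uniform Roe algebras are strongly continuous, the family $(\Phi^{-1}(\chi_y))_{y\in Y}$ consists of mutually orthogonal projections on $\ell_2(X)$ whose sums over arbitrary $M\subseteq Y$ lie in $\cstu(X)$ and whose total SOT-sum is $1_{\ell_2(X)}$. Because $X$ \emph{is} metrizable, Lemma \ref{LemmaUnifApprox} gives equi-approximability of this family in $\cstu(X)$, and Lemma \ref{LemmaCorRigPaper} then yields $\delta>0$ and, for each $x\in X$, a point $f(x)\in Y$ with $\|\Phi^{-1}(\chi_{f(x)})\delta_x\|>\delta$. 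Finally, the bound is transferred to the form required by \eqref{EqfWitRig} using that $\Phi$ is isometric:
\[
\|\Phi(\chi_x)\delta_{f(x)}\|=\|\Phi(\chi_x)\chi_{f(x)}\|=\|\chi_x\Phi^{-1}(\chi_{f(x)})\|=\|\Phi^{-1}(\chi_{f(x)})\delta_x\|>\delta .
\]
This pull-back-and-transfer step is the missing idea in your proposal; without it, your plan requires either metrizability of $Y$ or an unproved (and, in view of the ghost examples, false in general) concentration principle for rank-one projections in $\cstu(Y)$.
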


Since this result hasn't been stated explicitly in \cite{BaudierBragaFarahKhukhroVignatiWillett2021uRaRig}, for the reader's convenience, we include its brief proof below. First, a lemma.
\begin{lemma}[{\cite[Corollary 3.3]{BaudierBragaFarahKhukhroVignatiWillett2021uRaRig}}]\label{LemmaCorRigPaper}
 Let $(X,\cE)$ be a metrizable u.l.f.\ coarse space and let $(p_n)_{n\in \N}$ be a sequence of projections in $\mathcal{B}(\ell_2(X))$ such that 
\begin{enumerate}
\item the family $(\sum_{n\in A}p_n)_{A\subseteq \N,|A|<\infty}$ is equi-approximable, and 
\item $\SOTh\sum_{n\in\N}p_n=1_{\ell_2(X)}$.
\end{enumerate}
Then, 
\[
\inf_{x\in X}\sup_{n\in\N} \|p_n\delta_x\|>0.
\]
\end{lemma}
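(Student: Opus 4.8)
The plan is to prove the contrapositive quantitatively: fix a scale $\eps>0$, extract from equi-approximability and uniform local finiteness a lower bound of the form $\max_n\|p_n\delta_x\|^2\ge c(\eps)$ uniform in $x$, and then deal with the interaction between $\eps$ and the local cardinality. First I would reduce to a statement about diagonal entries. Since $\|p_n\delta_x\|^2=\langle p_n\delta_x,\delta_x\rangle=(p_n)_{x,x}$, the conclusion is $\inf_x\max_n(p_n)_{x,x}>0$. Hypothesis (2) gives that the partial sums $\sum_{n\le m}p_n$ increase in SOT to $1$, hence are $\le 1$; so $p_i+p_j\le 1$ for $i\ne j$, forcing $\ran p_i\perp\ran p_j$. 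Thus the $p_n$ are pairwise orthogonal, the vectors $(p_n\delta_x)_n$ are orthogonal, and $\sum_n\|p_n\delta_x\|^2=\langle\delta_x,\delta_x\rangle=1$ for every $x$.

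Next I would set up the localization. Given $\eps$, condition (1) provides $E\in\cE$ (which I may take symmetric and containing the diagonal) such that every finite partial sum $q_A=\sum_{n\in A}p_n$ is $\eps$-$E$-approximable; u.l.f. gives $N:=\sup_x|B_E(x)|<\infty$, and I write $B=B_E(x)$, so $\dim\ell_2(B)\le N$. Any $b$ with $\propg(b)\le s$, $\supp(b)\subseteq E$ sends $\delta_x$ into $\ell_2(B)$, whence $\|(1-\chi_{B})q_A\delta_x\|\le\eps$ for all finite $A$. For signs $\eps_n\in\{\pm1\}$ the operator $\sum_{n\in A}\eps_np_n$ is a difference of two partial sums, hence $2\eps$-$E$-approximable, so $\|(1-\chi_{B})\sum_{n\in A}\eps_np_n\delta_x\|\le 2\eps$; averaging over random signs yields the crucial \emph{leaked-mass bound}
\[
\sum_n\|(1-\chi_{B})p_n\delta_x\|^2\le 4\eps^2 .
\]

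Then I would run a finite-dimensional pigeonhole on the corner $\ell_2(B)$. Put $c_n:=\chi_{B}p_n\chi_{B}\ge 0$, so that $\sum_nc_n=\chi_{B}$ and $(c_n)_{x,x}=(p_n)_{x,x}$. The single-$n$ localization gives $\tr(c_n-c_n^2)=\sum_{y\in B}\|(1-\chi_{B})p_n\delta_y\|^2\le N\eps^2$, so every eigenvalue of $c_n$ lies within $O(N\eps^2)$ of $\{0,1\}$; summing, $\sum_n\tr(c_n-c_n^2)=\sum_{y\in B}\sum_n\|(1-\chi_{B})p_n\delta_y\|^2\le 4N\eps^2$. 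Consequently at most $\approx N$ of the $c_n$ have an eigenvalue near $1$ (their traces exceed $1-O(N\eps^2)$ while $\sum_n\tr c_n=\tr\chi_{B}\le N$), whereas the remaining ``negligible'' ones satisfy $\|c_n\|\le O(N\eps^2)$ and $\sum_{\mathrm{neg}}\tr c_n\le O(N\eps^2)$. Since $\sum_{\mathrm{neg}}(c_n)_{x,x}\le\sum_{\mathrm{neg}}\tr c_n\le O(N\eps^2)$, the non-negligible $c_n$ carry diagonal mass $\ge 1-O(N\eps^2)$ at $x$, and pigeonholing over the $\approx N$ of them produces an $n$ with $(p_n)_{x,x}=(c_n)_{x,x}\ge(1-O(N\eps^2))/O(N)$. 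A cleaner, race-free byproduct of the same trace computation (using $\sum_n|(p_n)_{y,z}|^2\le\sqrt{\sigma_y\sigma_z}$ with $\sigma_y:=\max_n(p_n)_{y,y}$) is the neighborhood estimate $\sum_{y\in B_E(x)}\sigma_y\ge 1-4\eps^2$.

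The hard part will be the interplay between $\eps$ and $N$. The pigeonhole bound is positive and uniform in $x$ only when $N\eps^2$ is small, but $N=N(\eps)$ grows as $\eps\to 0$, so one cannot simply take $\eps$ tiny; equivalently, one must rule out that the unit mass $\sum_n(p_n)_{x,x}=1$ is carried by arbitrarily many arbitrarily small ``fuzzy'' pieces. The neighborhood estimate above circumvents the race but only locates a good point within one $E$-step of $x$, which is weaker than the claim. I expect the honest resolution to be a compactness/recentring argument rather than a single explicit choice of $\eps$: assuming $\inf_x\max_n(p_n)_{x,x}=0$, choose $x_k$ with $\max_n(p_n)_{x_k,x_k}\to 0$, recentre the pointed u.l.f.\ spaces at $x_k$, and pass to an ultralimit. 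Equi-approximability transfers to the limit with the same propagation control, so the limiting projections still sum to $1$; yet the diagonal at the basepoint vanishes, contradicting that the limit of $\sum_n(p_n)_{*,*}$ equals $1$. Making the limit precise — so that mass is genuinely conserved and the basepoint diagonal genuinely vanishes — is where the real work lies.
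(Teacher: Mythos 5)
Your preliminary reductions are correct and match real ingredients of the actual argument (the paper does not prove this lemma itself; it quotes \cite[Corollary 3.3]{BaudierBragaFarahKhukhroVignatiWillett2021uRaRig}, so the comparison below is against that proof): the increasing SOT-convergent partial sums force pairwise orthogonality of the $p_n$; the Rademacher-averaging trick does yield the leaked-mass bound $\sum_n\|(1-\chi_B)p_n\delta_x\|^2\le 4\eps^2$; and the neighborhood estimate $\sum_{y\in B}\sigma_y\ge 1-4\eps^2$ is right. (One local slip: in the trace computation, $\tr(c_n-c_n^2)=\sum_{y\in B}\|(1-\chi_B)p_n\delta_y\|^2$, but for $y\in B=B_E(x)$ the localization hypothesis controls the mass of $p_n\delta_y$ outside $B_E(y)$, not outside $B_E(x)$; since $B_E(y)\not\subseteq B_E(x)$, the bound $N\eps^2$ does not follow as written.)

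The genuine gap is that the proof stops exactly at the step you flag as ``where the real work lies'', and the route you propose cannot supply it: the ultralimit argument begs the question. Under the contradiction hypothesis, the natural limit projections are indexed by sequences $\bar n=(n_k)$, and each has basepoint diagonal $\lim_\omega (p_{n_k})_{x_kx_k}=0$. The assertion that ``the limiting projections still sum to $1$'' at the basepoint is precisely the assertion that, $\omega$-often, the unit mass $\sum_n(p_n)_{x_kx_k}=1$ is carried by boundedly many indices $n$ --- which is (a weak form of) the conclusion being proved. Equi-approximability does transfer to the limit, but it would transfer equally well in the hypothetical bad scenario; mass escaping to infinitely many indices is exactly the failure mode that the limit construction cannot rule out by itself. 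So the compactness step is not deferred technical work; it is the whole problem restated.

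The idea that actually closes the gap in \cite{BaudierBragaFarahKhukhroVignatiWillett2021uRaRig} uses an ingredient you never exploit: an \emph{upper} bound on all compressed subset sums, $\bigl\|\sum_{n\in M}\chi_Bp_n\delta_x\bigr\|=\|\chi_B q_M\delta_x\|\le\|q_M\|\le 1$ for every $M\subseteq\N$, combined with a directional pigeonhole whose error tolerance is an absolute constant. Cover the unit sphere of $\ell_2(B)$ (dimension $\le N$) by $f(N)$ caps in which any two unit vectors $\zeta,\zeta'$ satisfy $\mathrm{Re}\langle \zeta,\zeta'\rangle\ge 1/2$. With $u_n=\chi_Bp_n\delta_x$, your leaked-mass bound gives $\sum_n\|u_n\|^2\ge 1-4\eps^2$, so some cap carries mass $T\ge (1-4\eps^2)/(2f(N))$ on a finite index set $M$. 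Pairwise alignment then gives $\bigl\|\sum_{n\in M}u_n\bigr\|^2\ge \tfrac12\bigl(\sum_{n\in M}\|u_n\|\bigr)^2\ge T^2/(2\sigma)$, where $\sigma=\max_n\|u_n\|^2$; comparing with the upper bound $1$ yields $\sigma\ge T^2/2$, hence $\max_n\|p_n\delta_x\|^2\ge c/f(N)^2$ uniformly in $x$. This is what dissolves the $\eps$-versus-$N(\eps)$ circularity you correctly identified: the tolerances needed ($\eps\le 1/4$, alignment constant $1/2$) do not shrink as $N$ grows, so one fixes $\eps$ once and for all, lets $N=N(\eps)$ be as large as it must be, and the resulting bound $c/f(N)^2$, however small, is positive. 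Your scheme, by contrast, only produces bounds of the form $(1-O(N\eps^2))/O(N)$, which genuinely cannot be salvaged by any choice of $\eps$.
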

 
\begin{proof}[Proof of Corollary \ref{CorRigPaper}]
As isomorphisms between uniform Roe algebras are strongly continuous (\cite[Lemma 3.1]{SpakulaWillett2013AdvMath}), the projections $(\Phi^{-1}(\chi_y))_{y\in Y}$ satisfy the hypothesis of Lemma \ref{LemmaCorRigPaper}. As $X$ is metrizable, this lemma applies and there is $\delta>0$ and $f\colon X\to Y$ such that $\|\Phi^{-1}(\chi_{f(x)})\delta_{x}\|> \delta$ for all $x\in X$. Therefore, for all $x\in X$,
\[
\|\Phi(\chi_x)\delta_{f(x)}\|=\|\Phi(\chi_x)\chi_{f(x)}\|=\|\chi_x\Phi^{-1}(\chi_{f(x)})\|=\|\Phi^{-1}(\chi_{f(x)})\delta_{x}\|> \delta.\qedhere
\]
\end{proof}

The next lemma highlights the reason why a map $f\colon X\to Y$ satisfying \eqref{EqfWitRig} is important for rigidity problems. Item \eqref{LemmaMapWitRig1} is \cite[Lemma 5.2]{BragaFarahVignati2019Comm} and items \eqref{LemmaMapWitRig2} and \eqref{LemmaMapWitRig3} can be extracted from the proof of \cite[Theorem~1.12]{BaudierBragaFarahKhukhroVignatiWillett2021uRaRig}.

A function is said to be \emph{uniformly finite-to-one} if there is a uniform finite bound on the cardinalities of the preimages of points in its range. 

\begin{lemma}\label{LemmaMapWitRig}
Let $(X,\cE)$ and $(Y,\cF)$ be u.l.f.\ coarse spaces and suppose $(X,\cE)$ is metrizable. Let $\Phi\colon \cstu(X)\to \cstu(Y)$ be an embedding and suppose that $f\colon X\to Y$ satisfies \eqref{EqfWitRig}. The following holds:
\begin{enumerate}
\item \label{LemmaMapWitRig1}If $(Y,\cF)$ is metrizable, then $f$ is coarse and uniformly finite-to-one.\footnote{It is straightforward to check that any map satisfying \eqref{EqfWitRig} is uniformly finite-to-one; see \cite[Lemma 5.1]{BragaFarahVignati2019Comm} for details.}
\item \label{LemmaMapWitRig2}If $c_0(Y)\subseteq \Phi(\cstu(X))$, then $f$ is coarse and uniformly finite-to-one. 
\item \label{LemmaMapWitRig3} If $\Phi$ is surjective, then $f$ is a coarse embedding.
\end{enumerate}
\end{lemma}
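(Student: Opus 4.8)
The plan is to dispatch the uniformly-finite-to-one assertion first, since it follows from \eqref{EqfWitRig} alone (this is the content of \cite[Lemma 5.1]{BragaFarahVignati2019Comm}), and then the coarseness and expanding assertions, which all rest on the same propagation-control mechanism. Fix $\delta>0$ witnessing \eqref{EqfWitRig}, so $\|\Phi(\chi_x)\delta_{f(x)}\|>\delta$ for all $x$. If $x_1,\dots,x_k$ are distinct with common image $f(x_i)=y$, then $\chi_{x_1},\dots,\chi_{x_k}$ are pairwise orthogonal projections, and since $\Phi$ is a $*$-homomorphism so are $\Phi(\chi_{x_1}),\dots,\Phi(\chi_{x_k})$. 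Bessel's inequality applied to $\delta_y$ then gives $k\delta^2<\sum_{i=1}^k\|\Phi(\chi_{x_i})\delta_y\|^2\leq\|\delta_y\|^2=1$, whence $k<\delta^{-2}$; thus $f$ is uniformly finite-to-one with bound $\lfloor\delta^{-2}\rfloor$. This argument is common to items \eqref{LemmaMapWitRig1} and \eqref{LemmaMapWitRig2}.

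For coarseness I would fix a controlled $E\in\cE$ and argue by contradiction. If $\{(f(x),f(z))\mid (x,z)\in E\}$ is not controlled in $Y$, choose $(x_n,z_n)\in E$ whose images escape every entourage of $\cF$ (in the metrizable case \eqref{LemmaMapWitRig1}, with metric $\partial$, one has $\partial(f(x_n),f(z_n))\to\infty$). Using u.l.f.\ and metrizability of $X$, I would thin out so that the $x_n$ are pairwise distinct and the $z_n$ are pairwise distinct, and then form $a=\SOTh\sum_n e_{x_n,z_n}$, the partial isometry sending $\delta_{z_n}\mapsto\delta_{x_n}$ and all other basis vectors to $0$. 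Its support lies in $E$, so $a\in\cstu(X)$ and $\Phi(a)\in\cstu(Y)$ is again a partial isometry with $\Phi(\chi_{x_n})\Phi(a)\Phi(\chi_{z_n})=\Phi(e_{x_n,z_n})$, an operator of norm one (embeddings are isometric). Since $\cstu(Y)$ is by definition the norm-closure of controlled-propagation operators, I can pick $b$ with $\supp(b)\subseteq F$ for some $F\in\cF$ and $\|\Phi(a)-b\|<1/2$, giving $\|\Phi(\chi_{x_n})\,b\,\Phi(\chi_{z_n})\|>1/2$ for every $n$. The decisive step is to convert this into a bound on $(f(x_n),f(z_n))$: writing $P_x=\Phi(\chi_x)$, property \eqref{EqfWitRig} says $P_x$ carries mass exceeding $\delta^2$ at the diagonal entry $(f(x),f(x))$, and the rigidity analysis of \cite{BaudierBragaFarahKhukhroVignatiWillett2021uRaRig} (through Corollary \ref{CorRigPaper} and Lemma \ref{LemmaCorRigPaper}) shows that each $P_x$ is, up to a uniformly small error, supported in a controlled neighbourhood of $f(x)$. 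Combined with $\supp(b)\subseteq F$ and $\|P_{x_n}bP_{z_n}\|>1/2$, this confines $(f(x_n),f(z_n))$ to a single entourage $F'\in\cF$ depending only on $E$, $F$, and $\delta$, contradicting the choice of the pairs.

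The three items differ only in how the localization of the $P_x$ is secured. Under \eqref{LemmaMapWitRig1} the metric on $Y$ lets this be carried out directly, following \cite[Lemma 5.2]{BragaFarahVignati2019Comm}. Under \eqref{LemmaMapWitRig2} the hypothesis $c_0(Y)\subseteq\Phi(\cstu(X))$ supplies the cutoffs needed to perform the same localization in the absence of a metric on $Y$, which is exactly what the proof of \cite[Theorem~1.12]{BaudierBragaFarahKhukhroVignatiWillett2021uRaRig} delivers. For \eqref{LemmaMapWitRig3}, surjectivity gives $c_0(Y)\subseteq\cstu(Y)=\Phi(\cstu(X))$, so \eqref{LemmaMapWitRig2} already yields that $f$ is coarse and uniformly finite-to-one; it remains to prove that $f$ is expanding. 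Here I would run the symmetric argument on the inverse isomorphism $\Phi^{-1}\colon\cstu(Y)\to\cstu(X)$, using the two-sided form of \eqref{EqfWitRig} established in the proof of Corollary \ref{CorRigPaper}, namely $\|\Phi^{-1}(\chi_{f(x)})\delta_x\|>\delta$. Because $X$ is metrizable, images $\Phi^{-1}(b')$ of controlled-propagation operators $b'$ on $Y$ have controlled-propagation approximants with respect to $d$, so the argument of the previous paragraph with the roles of $X$ and $Y$ reversed shows that $d(x_n,z_n)$ stays bounded whenever $(f(x_n),f(z_n))$ does. This is precisely the expanding property, and together with coarseness it yields that $f$ is a coarse embedding.

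The hard part is the decisive step of the second paragraph: the localization of $\Phi(\chi_x)$ near $f(x)$ and, equivalently, the control on the propagation of images of finite-propagation operators under $\Phi$. That a $*$-homomorphism between uniform Roe algebras cannot spread finite-propagation operators arbitrarily is the rigidity phenomenon itself, and it is here that the unconditional arguments of \cite{BaudierBragaFarahKhukhroVignatiWillett2021uRaRig} do the real work; everything else is the bookkeeping outlined above.
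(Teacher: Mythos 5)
The paper never proves this lemma: item \eqref{LemmaMapWitRig1} is quoted verbatim from \cite[Lemma 5.2]{BragaFarahVignati2019Comm}, and items \eqref{LemmaMapWitRig2} and \eqref{LemmaMapWitRig3} are attributed to the proof of \cite[Theorem 1.12]{BaudierBragaFarahKhukhroVignatiWillett2021uRaRig}, so the comparison has to be with those arguments and with the tools the paper itself assembles (Lemmas \ref{LemmaCor.Expanding} and \ref{LemmaTheMapIsCoarse.GenCoarseSp}). Your Bessel-inequality proof of uniform finite-to-one-ness is correct, and your reduction of coarseness to a propagation-control statement (the partial isometry $a=\SOTh\sum_n e_{x_n,z_n}$, the approximant $b$ with $\supp(b)\subseteq F$, and the bound $\|\Phi(\chi_{x_n})b\Phi(\chi_{z_n})\|>1/2$) is the right kind of skeleton. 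The problem is your ``decisive step.''

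That step asserts that each $P_x=\Phi(\chi_x)$ is, up to uniformly small error, supported in a \emph{controlled neighbourhood of $f(x)$}, and attributes this to Corollary \ref{CorRigPaper} and Lemma \ref{LemmaCorRigPaper}. Those two results contain no localization statement at all: they only produce a map $f$ satisfying \eqref{EqfWitRig}. The localization result that does exist, Lemma \ref{LemmaCor.Expanding}, (a) is stated for isomorphisms, not embeddings, and (b) localizes $\Phi(\chi_x)$ on the high-mass set $Y_{x,\eta}$, which is a finite set with \emph{no a priori relation} to the coarse geometry around $f(x)$; upgrading ``supported on $Y_{x,\eta}$'' to ``supported near $f(x)$ in a controlled way, uniformly in $x$'' is essentially the assertion that $f$ is coarse, so your argument assumes what it sets out to prove. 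Moreover, the direction of your mechanism is inverted relative to the paper's own conversion tool: in Lemma \ref{LemmaTheMapIsCoarse.GenCoarseSp}, localization of the forward images $\Phi(\chi_x)$ over $Y$ yields the \emph{expanding} property (and its proof uses surjectivity of $\Phi$ -- note that for genuine embeddings $f$ need not be expanding, e.g.\ the folding map $\Z\to\N$ induced by a unitary, whose $\Phi(\chi_x)=\chi_{f(x)}$ is perfectly localized), whereas \emph{coarseness} is deduced from localization of the preimages $\Phi^{-1}(\chi_{f(x)})$ over the metrizable space $X$; this is precisely why hypotheses \eqref{LemmaMapWitRig2} and \eqref{LemmaMapWitRig3} (which make $\Phi^{-1}(\chi_y)$ meaningful) or metrizability of $Y$ in \eqref{LemmaMapWitRig1} are needed. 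Two smaller problems of the same nature: for non-metrizable $(Y,\cF)$ in item \eqref{LemmaMapWitRig2} you cannot simply pick a sequence of pairs ``escaping every entourage'' without an argument, and in your expanding step for \eqref{LemmaMapWitRig3} the cutoffs do not pass through $\Phi^{-1}$ (one has $\chi_{x_n}\Phi^{-1}(b')\chi_{z_n}\neq\Phi^{-1}(\chi_{f(x_n)}b'\chi_{f(z_n)})$), so ``the same argument with the roles reversed'' does not literally run. In short, everything you prove honestly is the easy part, and the load-bearing step is a citation to machinery (the uniformization theorem of \cite{BaudierBragaFarahKhukhroVignatiWillett2021uRaRig}) invoked in a form it does not actually provide; closing the gap would require reproducing that machinery, which is exactly what the paper's own citation does.
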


\section{ONL for equi-approximable projections}\label{SectionONLEquiApprox}
 
In this section, we prove Theorems \ref{ThmBlockProjIFFONLprojINTRO} and \ref{ThmBlockRank1ProjIFFONLprojINTRO}. Both of those results follow immediately from the next two more technical theorems. 
 
\begin{theorem}\label{ThmBlockProjIFFONLproj}
Let $(X,\cE)$ be a u.l.f.\ coarse space and consider the following assertions. 
\begin{enumerate}
\item \label{ItemThmBlockProjIFFONLproj2.5} All ghost block projections in $\cstu(X)$ are compact.
\item\label{ItemThmBlockProjIFFONLproj3} $X$ has ONL for finite rank projections.
\end{enumerate}
Then, \eqref{ItemThmBlockProjIFFONLproj2.5}$\Rightarrow$\eqref{ItemThmBlockProjIFFONLproj3}. Moreover, if $(X,\cE)$ is metrizable, then \eqref{ItemThmBlockProjIFFONLproj3}$\Rightarrow$\eqref{ItemThmBlockProjIFFONLproj2.5} and those conditions are also equivalent to the following:
\begin{enumerate}\setcounter{enumi}{2}
\item\label{ItemThmBlockProjIFFONLproj1} If $X'\subseteq X$ is sparse, then all ghost projections in $\cstu(X')$ are compact.
\item \label{ItemThmBlockProjIFFONLproj2} If $X'\subseteq X$ is sparse, then all ghost block projections in $\cstu(X')$ are compact.
\end{enumerate}
\end{theorem}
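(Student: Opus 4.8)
The plan is to prove the single implication \eqref{ItemThmBlockProjIFFONLproj2.5}$\Rightarrow$\eqref{ItemThmBlockProjIFFONLproj3} in full generality, and then, assuming metrizability, to close the cycle by establishing $\eqref{ItemThmBlockProjIFFONLproj2.5}\Leftrightarrow\eqref{ItemThmBlockProjIFFONLproj3}$, $\eqref{ItemThmBlockProjIFFONLproj2.5}\Leftrightarrow\eqref{ItemThmBlockProjIFFONLproj2}$, and $\eqref{ItemThmBlockProjIFFONLproj2}\Leftrightarrow\eqref{ItemThmBlockProjIFFONLproj1}$. Two of the needed implications are soft: \eqref{ItemThmBlockProjIFFONLproj1}$\Rightarrow$\eqref{ItemThmBlockProjIFFONLproj2} is trivial, as a ghost block projection is in particular a ghost projection; and \eqref{ItemThmBlockProjIFFONLproj2.5}$\Rightarrow$\eqref{ItemThmBlockProjIFFONLproj2} is immediate from $\cstu(X')\subseteq\cstu(X)$, since a ghost block projection of the subspace is a ghost block projection of $X$ and compactness is intrinsic. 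The remaining directions split into two soft contrapositives and two genuine constructions.

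For the soft direction \eqref{ItemThmBlockProjIFFONLproj3}$\Rightarrow$\eqref{ItemThmBlockProjIFFONLproj2.5} I would argue contrapositively. Given a noncompact ghost block projection $p=\SOTh\sum_n p_n$ with $p_n=\chi_{X_n}p\chi_{X_n}$, the $X_n$ are finite and pairwise disjoint, so each finite set meets only finitely many of them; together with ghostliness this forces $\max_{x,z}|(p_n)_{x,z}|\to0$. Each partial sum $\SOTh\sum_{n\in M}p_n=\chi_{\bigcup_{n\in M}X_n}\,p\,\chi_{\bigcup_{n\in M}X_n}$ is a compression of $p$, hence lies in $\cstu(X)$; by Lemma~\ref{LemmaUnifApprox} the family $(\SOTh\sum_{n\in M}p_n)_{M\subseteq\N}$ is equi-approximable (this is the only place metrizability enters here), so $\{p_n\mid p_n\neq0\}$ is an equi-approximable family of finite-rank projections. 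If $X$ had ONL for finite-rank projections, then for $\eps_0=\tfrac12$ we would get $E\in\cE$ and sets $A_n$ with $A_n\times A_n\subseteq E$ and $\|p_n\chi_{A_n}\|\ge\tfrac12$. But $|A_n|$ is bounded by u.l.f., while the elementary bound $\|p_n\chi_{A_n}\|^2\le|A_n|\max_{x,z}|(p_n)_{x,z}|$ tends to $0$; contradiction. Applying the same computation to the restriction of $p$ to a sparse subsequence $\bigsqcup_k X_{n_k}$ (which exists, in the metrizable case, because disjoint finite blocks can be thinned to pairwise-far ones) yields \eqref{ItemThmBlockProjIFFONLproj2}$\Rightarrow$\eqref{ItemThmBlockProjIFFONLproj2.5}.

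The substantial implication is \eqref{ItemThmBlockProjIFFONLproj2.5}$\Rightarrow$\eqref{ItemThmBlockProjIFFONLproj3}, which I would again prove by contraposition: from a failure of ONL for finite-rank projections I must manufacture a noncompact ghost block projection. Fix the witnessing $\eps_0>0$ and equi-approximable family $S$ of finite-rank projections so that for every $E\in\cE$ some $a\in S$ is not $(\eps_0,E)$-normed. I would run a recursion over approximation levels $\eps_n\downarrow0$: using equi-approximability choose $E'_n$ realizing $\eps_n$, use the failure of ONL to pick $a_n\in S$ not $(\eps_0,E_n)$-normed for an entourage $E_n$ much larger than $E'_n$ and than the region used so far, and extract from $a_n$ a finite-rank projection $q_n$ supported on a finite set $X_n$ disjoint from the earlier blocks (the non-localizability of $a_n$ guarantees mass off any previously used bounded region). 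Since each $q_n$ is, up to $o(1)$, a compression $\chi_{X_n}a_n\chi_{X_n}$ of a member of the equi-approximable family $S$, the family $\{q_n\}$ is itself equi-approximable; hence $p:=\SOTh\sum_n q_n$ is block-diagonal with equi-approximable blocks, which makes it a genuine projection lying in $\cstu(X)$, and it is noncompact because infinitely many $q_n$ are nonzero.

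The crux, and the step I expect to be hardest, is arranging that $p$ is a ghost, i.e.\ that $\max_{x,z\in X_n}|(q_n)_{x,z}|\to0$. The mechanism is the tension between the two properties of $S$: failure of $(\eps_0,E_n)$-normability forces every vector $\zeta\in\ran(a_n)$ to carry mass $>\gamma:=1-(1-\eps_0)^2$ outside the $E'_n$-neighbourhood $A_0:=\{w\mid(x_0,w)\in E'_n\}$ of any point $x_0$, while equi-approximability forbids long-range correlations. In the rank-one model $q_n=\zeta\zeta^*$ this closes at once: writing $\|a_n-b\|\le\eps_n$ with $\supp(b)\subseteq E'_n$ and taking $x_0$ a coordinate where $|\zeta|$ peaks, the row of $a_n-b$ at $x_0$ over columns outside $A_0$ equals that of $a_n$, so
\[
\|\zeta\|_\infty\sqrt{\gamma}\;\le\;|\zeta_{x_0}|\,\bigl\|\chi_{X\setminus A_0}\zeta\bigr\|\;\le\;\|a_n-b\|\;\le\;\eps_n,
\]
whence $\|\zeta\|_\infty\le\eps_n/\sqrt{\gamma}\to0$ and $q_n$ has entries $\le\eps_n^2/\gamma$. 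The real work is to push this from the rank-one model to the finite-rank blocks needed here, where a single peak estimate no longer suffices and one must instead locate, inside $\ran(a_n)$, a subspace all of whose unit vectors are uniformly spread out, equivalently a sub-projection with uniformly small diagonal; the absence of any localized direction in $\ran(a_n)$ (a direct consequence of non-$(\eps_0,E_n)$-normability) is exactly what should supply this delocalization. Finally, for \eqref{ItemThmBlockProjIFFONLproj2}$\Rightarrow$\eqref{ItemThmBlockProjIFFONLproj1} I would, contrapositively, begin from a noncompact ghost projection on a sparse $X'$ and upgrade it to a noncompact ghost \emph{block} projection on a possibly sparser subspace: on a sparse space finite-propagation operators are eventually block-diagonal, so compressing to the blocks, using that a projection of infinite rank has diagonal of infinite trace to keep the compression noncompact, and correcting the resulting nearly orthogonal positive pieces to genuine finite-rank projections via functional calculus, should produce the required block projection and contradict \eqref{ItemThmBlockProjIFFONLproj2}.
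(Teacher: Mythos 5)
Your peripheral implications are essentially the paper's own: the triviality of \eqref{ItemThmBlockProjIFFONLproj1}$\Rightarrow$\eqref{ItemThmBlockProjIFFONLproj2} and \eqref{ItemThmBlockProjIFFONLproj2.5}$\Rightarrow$\eqref{ItemThmBlockProjIFFONLproj2}, the ghost-versus-normed contradiction for \eqref{ItemThmBlockProjIFFONLproj3}$\Rightarrow$\eqref{ItemThmBlockProjIFFONLproj2.5} (your bound $\|p_n\chi_{A_n}\|^2\le|A_n|\max_{x,z}|(p_n)_{x,z}|$ is the same estimate the paper runs with $\|p_n\delta_x\|<1/(2k)$), the thinning-to-a-sparse-subsequence argument for \eqref{ItemThmBlockProjIFFONLproj2}$\Rightarrow$\eqref{ItemThmBlockProjIFFONLproj2.5}, and the block-compression-plus-functional-calculus upgrade for \eqref{ItemThmBlockProjIFFONLproj2}$\Rightarrow$\eqref{ItemThmBlockProjIFFONLproj1} (the paper does this by lifting projections along $\prod_n\cK(\ell_2(X_n))\to\prod_n\cK(\ell_2(X_n))/\bigoplus_n\cK(\ell_2(X_n))$, which is the same move). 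All of these are correct.

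The genuine gap is in the one implication that holds for general u.l.f.\ coarse spaces, \eqref{ItemThmBlockProjIFFONLproj2.5}$\Rightarrow$\eqref{ItemThmBlockProjIFFONLproj3}, and you flag it yourself: your ghost-entry estimate is only carried out when $a_n$ has rank one, and the extension to finite rank is left as something that non-normability ``should supply''. This is not a routine extension; it is precisely the technical core of the paper's argument, namely Lemma \ref{LemmaSortOfONLForAnySpace} (imported from \cite[Lemma 5.4]{BaudierBragaFarahKhukhroVignatiWillett2021uRaRig}), whose contrapositive says: if a projection $p$ is $\gamma$-approximable with controlled propagation and is \emph{not} $(\eps,W)$-normed, then $\|pq\|<\delta$ for \emph{every} localized rank-one $q$ --- in particular $\sup_x\|p\chi_x\|<\delta$. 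Your peak argument genuinely uses the factorization $(a_n)_{x_0,w}=\zeta_{x_0}\overline{\zeta_w}$ of a rank-one matrix; for higher rank the row at a peak point is a sum $\sum_j(\zeta_j)_{x_0}\overline{(\zeta_j)_w}$ in which cancellation destroys the inequality, and non-normability by itself only yields the fixed bound $(p)_{x,x}\le(1-\eps_0)^2$, never decay. Note also that your intermediate goal is aimed at the wrong object: what the lemma delivers (and what the construction of the ghost block projection needs, cf.\ Lemma \ref{LemmaGeoCondForEquiApproxAreBoundedBelow}) is that the \emph{whole} projection has uniformly small point masses $\sup_x\|p_n\chi_x\|\to0$, with the decay driven by the improving approximation scale $\eps_n\to0$; hunting for a delocalized sub-projection is both unnecessary and not obviously achievable by your method. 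Until this quantitative localization step is proved for finite-rank projections, the central implication --- and hence the theorem, whose statement (unlike Theorem \ref{ThmBlockRank1ProjIFFONLproj}) concerns finite-rank rather than rank-one projections --- remains unestablished. A secondary, fixable wrinkle: in your recursion you keep the new block disjoint from the finitely many points used so far by invoking non-normability, but in a general (possibly non-connected) coarse space a finite set $A$ need not satisfy $A\times A\in\cE$; the paper avoids this by first securing $\|p_n\chi_x\|<2^{-n}$ at \emph{every} point $x$, from which disjointness of the blocks follows directly.
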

 
\begin{theorem}\label{ThmBlockRank1ProjIFFONLproj}
Let $(X,\cE)$ be a u.l.f.\ coarse space and consider the following assertions.
\begin{enumerate}
\item There are no ghost block-rank-one projections in $\cstu(X)$. 
\item $X$ has ONL for rank one projections.
\end{enumerate}
Then, \eqref{ItemThmBlockProjIFFONLproj2.5}$\Rightarrow$\eqref{ItemThmBlockProjIFFONLproj3}. Moreover, if $(X,\cE)$ is metrizable, then \eqref{ItemThmBlockProjIFFONLproj3}$\Rightarrow$\eqref{ItemThmBlockProjIFFONLproj2.5} and those conditions are also equivalent to the following:
\begin{enumerate}\setcounter{enumi}{2}
\item If $X'\subseteq X$ is sparse, then there are no noncompact ghost block-rank-one projections in $\cstu(X')$.
\end{enumerate}
\end{theorem}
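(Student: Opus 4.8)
The plan is to deduce Theorem~\ref{ThmBlockRank1ProjIFFONLproj} by running the argument of Theorem~\ref{ThmBlockProjIFFONLproj} while keeping every projection rank one. The rank-one restriction forces one extra quantity into the bookkeeping, namely the supremum norm $\max_x|\eta_n(x)|$ of the vectors spanning the blocks. Indeed, if $p=\SOTh\sum_n|\eta_n\rangle\langle\eta_n|$ is block-rank-one with $\eta_n$ a unit vector supported on a finite set $X_n$, then the only nonzero matrix entries of $p$ are the within-block entries $\eta_n(x)\overline{\eta_n(z)}$ with $x,z\in X_n$; hence \emph{$p$ is a ghost if and only if $\max_x|\eta_n(x)|\to0$}. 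This equivalence is the thread through all the implications. I would prove (1)$\Rightarrow$(2) for arbitrary u.l.f.\ coarse spaces, and establish (2)$\Rightarrow$(1) together with the equivalence with the sparse-subspace condition (3) under metrizability.

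\textbf{(1)$\Rightarrow$(2), the main construction (contrapositive).} Suppose $X$ fails ONL for rank one projections, witnessed by $\eps_0>0$ and an equi-approximable family $S$ of rank one projections, none of whose members is $(\eps_0,E)$-normed for a suitable $E\in\cE$ depending on $E$. For each $n$ I would pick $p_n=|\xi_n\rangle\langle\xi_n|\in S$ and $E_n\in\cE$ so that every element of $S$ is $\tfrac1n$-$E_n$-approximable (equi-approximability) and $p_n$ is not $(\eps_0,E_n\circ E_n^{-1})$-normed (failure of ONL applied to $E_n\circ E_n^{-1}$). The key quantitative step is the bound
\[
\max_x|\xi_n(x)|<\frac{2}{n\,\eps_0},
\]
proved by a \emph{peak argument}: if $x_0$ maximizes $|\xi_n|$ and $\supp(b)\subseteq E_n$ with $\|p_n-b\|\le\tfrac1n$, then $p_n\delta_{x_0}$ is a scalar multiple of $\xi_n$ of norm $\max_x|\xi_n(x)|$, while $b\delta_{x_0}$ is supported on the (finite) $E_n$-ball $B$ of $x_0$; comparing the two on $B$ and using $B\times B\subseteq E_n\circ E_n^{-1}$ together with $p_n$ not being $(\eps_0,E_n\circ E_n^{-1})$-normed yields the estimate, so $\max_x|\xi_n(x)|\to0$. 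I would then replace each $\xi_n$ by the normalization of $\chi_{F_n}\xi_n$ for a finite $F_n$ with $\|\chi_{F_n}\xi_n\|\ge1-2^{-n}$ (this keeps the sup-norms vanishing and the projections close to $p_n$, hence equi-approximable), and make the supports pairwise disjoint recursively: if $A$ is the finite union of the blocks chosen so far, then $\|\chi_A\eta\|\le\sqrt{|A|}\max_x|\eta(x)|$ shows the mass on $A$ of a small-sup-norm vector is negligible, so I can choose the next vector with $\|\chi_A\xi_{n_k}\|\le2^{-k}$, delete $A$ from its support and renormalize. This produces unit vectors $\zeta_k$ with pairwise disjoint finite supports $X_k$, $\max_x|\zeta_k(x)|\to0$, and $\{|\zeta_k\rangle\langle\zeta_k|\}_k$ equi-approximable. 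Then $p=\SOTh\sum_k|\zeta_k\rangle\langle\zeta_k|$ is block-rank-one by construction, a ghost since $\max_x|\zeta_k(x)|\to0$, and lies in $\cstu(X)$ because, the supports being disjoint, equi-approximability lets one compress a single $\delta$-$E$-approximant to each block and sum, approximating $p$ at a \emph{uniform} scale. As block-rank-one projections are noncompact, this contradicts (1).

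\textbf{(2)$\Rightarrow$(1) and the sparse condition (3).} The implication (2)$\Rightarrow$(1) is easy and holds for all u.l.f.\ coarse spaces: given a ghost block-rank-one $p=\SOTh\sum_n q_n$ with $q_n=\chi_{X_n}p\chi_{X_n}=|\eta_n\rangle\langle\eta_n|$, the family $\{q_n\}_n$ is equi-approximable (compress a $\delta$-$E$-approximant of $p$ by $\chi_{X_n}$), consists of rank one projections, and has $\max_x|\eta_n(x)|\to0$ since $p$ is a ghost with disjoint finite blocks; uniform local finiteness bounds $|A\cap X_n|$ whenever $A\times A\subseteq E$, so $\|\chi_A\eta_n\|\to0$ uniformly in such $A$, and for large $n$ no $q_n$ is $(\tfrac12,E)$-normed. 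For the equivalence with (3) (where sparseness forces metrizability), (1)$\Rightarrow$(3) is immediate from $\cstu(X')\subseteq\cstu(X)$, and for (3)$\Rightarrow$(1) I would thin the blocks of a ghost block-rank-one projection greedily—using that the $r$-neighborhood of a finite set is finite, so infinitely many blocks avoid it—to a subsequence with $d(X_{n_k},X_{n_j})\to\infty$, whose restriction is a noncompact ghost block-rank-one projection on the sparse space $X'=\bigsqcup_k X_{n_k}$.

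The main obstacle is the construction in (1)$\Rightarrow$(2). Its two delicate points are the peak argument extracting vectors of vanishing sup-norm—precisely the place where rank-one-ness and equi-approximability are played against the failure of ONL, and where the general (non-metric) case must be controlled—and the verification that, after disjointifying the supports, the assembled projection still belongs to $\cstu(X)$ rather than merely to $\cB(\ell_2(X))$; both are resolved by exploiting equi-approximability at a uniform scale.
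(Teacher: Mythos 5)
Your proof is correct, but it takes a more elementary and self-contained route than the paper, which proves this theorem in one line by rerunning the proof of Theorem \ref{ThmBlockProjIFFONLproj} with ``rank one'' in place of ``finite rank''. In that proof, (1)$\Rightarrow$(2) factors through Lemma \ref{LemmaGeoCondForEquiApproxAreBoundedBelow} (vanishing peak values yield, by truncation and disjointification, a ghost block-rank-one projection --- the same assembly you perform) and Lemma \ref{LemmaSortONLForEquiApproxFamOfProj}, whose key direction rests on the technical Lemma \ref{LemmaSortOfONLForAnySpace} imported from the rigidity paper. Your ``peak argument'' is an elementary substitute for that imported lemma, available exactly because the projections have rank one, so that $\sup_x\|p\chi_x\|$ is the sup-norm of the spanning vector and the failure of $(\eps_0,E_n\circ E_n^{-1})$-normedness can be played against an $E_n$-supported approximant directly; the trade-off is that your argument cannot handle the finite-rank families of Theorem \ref{ThmBlockProjIFFONLproj}, which the paper's machinery covers with the same proof. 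In the converse direction you in fact prove more than the theorem asserts: the paper obtains equi-approximability of the blocks $q_n=\chi_{X_n}p\chi_{X_n}$ from Claim \ref{Claim3.7} together with Lemma \ref{LemmaUnifApprox}, and the latter (stated only for metric spaces) is the sole reason metrizability is assumed for (2)$\Rightarrow$(1); your compression trick --- take one controlled-propagation $b$ with $\|p-b\|\leq\delta$ and note that $\chi_{X_n}b\chi_{X_n}$ is a $\delta$-approximant of $q_n$ with support inside $\supp(b)$ for every $n$ --- works in any u.l.f.\ coarse space, so you get the equivalence (1)$\Leftrightarrow$(2) without metrizability. Your treatment of condition (3), via extension by zero for (1)$\Rightarrow$(3) and greedy thinning to a sparse subspace for (3)$\Rightarrow$(1), coincides with the paper's. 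One phrasing slip, harmless because your construction uses the correct statement: the negation of ONL provides, for each entourage $E$, \emph{some} member of $S$ that is not $(\eps_0,E)$-normed, not a family none of whose members is $(\eps_0,E)$-normed.
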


Before presenting the (very similar) proofs of Theorems \ref{ThmBlockProjIFFONLproj} and \ref{ThmBlockRank1ProjIFFONLproj}, we need some lemmas. The next technical lemma was proved in \cite[Lemma 5.4]{BaudierBragaFarahKhukhroVignatiWillett2021uRaRig} for metrizable u.l.f.\ coarse spaces. Its proof for an arbitrary coarse space is virtually identical, so we omit it here. 
 
 \begin{lemma}[{\cite[Lemma 5.4]{BaudierBragaFarahKhukhroVignatiWillett2021uRaRig}}]
 Let $(X,\cE)$ be a u.l.f. coarse space. Then, given $\eps,\delta>0$ there is $\gamma>0$ so that for all $E,F\in \cE$ there is $W\in \cE$ for which the following holds: let $p,q,a\in \cB(\ell_2(X))$, where $p$ is a projection and $q$ is a rank 1 projection. If $\supp(q)\subseteq E$, $\|p-a\|<\gamma $, $\supp(a)\subseteq F $, and $\|pq\|\geq \delta$, then $p$ is $(\eps,W)$-normed.\label{LemmaSortOfONLForAnySpace}
\end{lemma}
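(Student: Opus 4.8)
The plan is to reduce the statement to a single localized unit vector and then to amplify a lower bound of size $\delta$ up to $1-\eps$ by means of the finite-propagation approximant $a$. Since $q$ is a rank one projection, it is the orthogonal projection onto $\C\xi$ for some unit vector $\xi\in\ell_2(X)$, i.e.\ $q\zeta=\langle\zeta,\xi\rangle\xi$. A direct computation gives $q_{x,z}=\xi_x\overline{\xi_z}$, so that $\supp(q)=\supp(\xi)\times\supp(\xi)$, and the hypothesis $\supp(q)\subseteq E$ becomes $\supp(\xi)\times\supp(\xi)\subseteq E$. Likewise $pq$ acts as $\zeta\mapsto\langle\zeta,\xi\rangle p\xi$, a rank one operator of norm $\|p\xi\|$, so the hypothesis $\|pq\|\geq\delta$ reads $\|p\xi\|\geq\delta$. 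In particular $p\neq0$, hence $\|p\|=1$; we may assume $\eps<1$, the case $\eps\geq1$ being vacuous.

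First I would fix the constant, depending only on $\eps$ and $\delta$: take $\gamma=\eps\delta/2$, which one checks satisfies $(\delta-\gamma)/(\delta+\gamma)\geq1-\eps$ (and $\gamma<\delta$). Given $E,F\in\cE$, set $W=F\circ E\circ F^{-1}$, which lies in $\cE$ because coarse structures are closed under composition and inverses. The crucial point is that $W$ depends only on $E$ and $F$, and not on $p$, $q$, or $a$.

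Now consider the localized vector $\eta:=a\xi$. Since $\supp(a)\subseteq F$, one has $\supp(\eta)\subseteq F[\supp(\xi)]=\{x\in X:\exists z\in\supp(\xi),\ (x,z)\in F\}$; and because $\supp(\xi)\times\supp(\xi)\subseteq E$ is a full (hence symmetric) product, a short chase through the definition of composition shows that $A:=\supp(\eta)$ satisfies $A\times A\subseteq F\circ E\circ F^{-1}=W$. For the norm estimates, write $pa=p+p(a-p)$, so that $\|pa\xi\|\geq\|p\xi\|-\gamma$, while $\|a\xi\|\leq\|p\xi\|+\gamma$ from $\|a-p\|<\gamma$ and $\|\xi\|=1$ (in particular $\|\eta\|\geq\delta-\gamma>0$). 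Setting $v=\eta/\|\eta\|$, a unit vector supported in $A$, monotonicity of $t\mapsto(t-\gamma)/(t+\gamma)$ on $[\delta,1]\ni\|p\xi\|$ gives
\[
\|pv\|=\frac{\|pa\xi\|}{\|a\xi\|}\geq\frac{\|p\xi\|-\gamma}{\|p\xi\|+\gamma}\geq\frac{\delta-\gamma}{\delta+\gamma}\geq1-\eps.
\]
Since $\chi_A v=v$, this yields $\|p\chi_A\|\geq\|p\chi_A v\|=\|pv\|\geq(1-\eps)\|p\|$, so $p$ is $(\eps,W)$-normed, as required.

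I expect the only genuinely delicate points to be the following. First, verifying that $A=\supp(a\xi)$ satisfies $A\times A\subseteq W$ \emph{uniformly} in the data: this is exactly where the two support hypotheses $\supp(q)\subseteq E$ and $\supp(a)\subseteq F$ combine, via $W=F\circ E\circ F^{-1}$, and it is why $W$ can be fixed in advance from $E,F$ alone. Second, the amplification itself: although $\|p\xi\|$ may be as small as $\delta$, passing from $\xi$ to the normalized approximant $v$ trades the small quantity $\|p\xi\|$ against itself in numerator and denominator, so the lower bound $(\|p\xi\|-\gamma)/(\|p\xi\|+\gamma)$ is forced close to $1$ as soon as $\gamma$ is small relative to $\delta$ — which is arranged once and for all by $\gamma=\eps\delta/2$, independently of $E$ and $F$.
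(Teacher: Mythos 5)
Your proof is correct, and it is essentially the argument behind the result the paper cites here (the paper itself omits the proof, deferring to \cite[Lemma 5.4]{BaudierBragaFarahKhukhroVignatiWillett2021uRaRig}): take the unit vector $\xi$ spanning the range of $q$, push it through the finite-propagation approximant to get the localized vector $a\xi$ with $\supp(a\xi)\times\supp(a\xi)\subseteq F\circ E\circ F^{-1}$, normalize, and run the $(\delta-\gamma)/(\delta+\gamma)$ estimate, with $\gamma$ depending only on $\eps$ and $\delta$. Your choices $\gamma=\eps\delta/2$ and $W=F\circ E\circ F^{-1}$, and the verification that $(\delta-\gamma)/(\delta+\gamma)\geq 1-\eps$, all check out, so this is a faithful coarse-space rendition of the cited proof.
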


The previous lemma allows us to show that, given $\eps>0$, certain equi-approximable families can be $(\eps,W)$-normed with respect to a single entourage $W\in \cE$.
 
\begin{lemma}\label{LemmaSortONLForEquiApproxFamOfProj}
Let $X$ be a u.l.f.\ coarse space and let $\cP$ be an equi-approximable family of finite rank projections in $\cstu(X)$. The following are equivalent:
\begin{enumerate}
\item\label{item1SortONL}$\inf_{p\in \cP }\sup_{x\in X}\|p\chi_{x}\|>0$.
\item\label{item2SortONL}For all $\eps>0$, there is $W\in \cE$ such that each $p\in \cP$ is $(\eps,W)$-normed 
(that is, $(X,d)$ satisfies ONL for finite rank projections).
\item\label{item3SortONL} There are $\eps\in (0,1)$ and $W\in \cE$ such that each $p\in \cP$ is $(\eps,W)$-normed. 
\end{enumerate}
\end{lemma}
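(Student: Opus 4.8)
The plan is to establish the cycle $\eqref{item2SortONL}\Rightarrow\eqref{item3SortONL}\Rightarrow\eqref{item1SortONL}\Rightarrow\eqref{item2SortONL}$. The first implication is immediate: instantiate \eqref{item2SortONL} at any fixed $\eps\in(0,1)$. Throughout I assume the projections in $\cP$ are nonzero (as they are in every intended application), so that $\|p\|=1$ for $p\in\cP$ and $(\eps,W)$-normedness reads $\|p\chi_A\|\geq 1-\eps$.

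For $\eqref{item3SortONL}\Rightarrow\eqref{item1SortONL}$ I would run a counting argument using uniform local finiteness. Fix $\eps\in(0,1)$ and $W\in\cE$ as in \eqref{item3SortONL}, and set $N=\sup_{x\in X}|\{z\in X\mid (x,z)\in W\}|$, which is finite by u.l.f. For each $p\in\cP$ choose $A_p\subseteq X$ with $A_p\times A_p\subseteq W$ and $\|p\chi_{A_p}\|\geq 1-\eps$. Since any two points of $A_p$ are $W$-related, $|A_p|\leq N$; writing $\chi_{A_p}=\sum_{x\in A_p}\chi_x$ and applying the triangle inequality gives $1-\eps\leq\|p\chi_{A_p}\|\leq |A_p|\sup_{x}\|p\chi_x\|\leq N\sup_{x}\|p\chi_x\|$. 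Hence $\sup_x\|p\chi_x\|\geq (1-\eps)/N$ uniformly in $p$, which is \eqref{item1SortONL}.

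The substance of the lemma is $\eqref{item1SortONL}\Rightarrow\eqref{item2SortONL}$, and here I would invoke Lemma \ref{LemmaSortOfONLForAnySpace}. Put $\delta=\tfrac12\inf_{p\in\cP}\sup_x\|p\chi_x\|>0$ and fix $\eps>0$. Lemma \ref{LemmaSortOfONLForAnySpace} applied to $\eps,\delta$ yields $\gamma>0$. By equi-approximability of $\cP$ applied at $\gamma/2$, pick $F\in\cE$ so that each $p\in\cP$ admits $a_p\in\cB(\ell_2(X))$ with $\supp(a_p)\subseteq F$ and $\|p-a_p\|\leq\gamma/2<\gamma$, and set $E=\Delta_X\in\cE$. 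Lemma \ref{LemmaSortOfONLForAnySpace} then produces a single $W\in\cE$, depending on $E,F,\eps,\delta$ but not on the individual $p$. For each $p$, choose $x_p$ with $\|p\chi_{x_p}\|\geq\delta$ (possible since $\sup_x\|p\chi_x\|\geq 2\delta$) and let $q_p=\chi_{x_p}$, a rank-one projection with $\supp(q_p)\subseteq\Delta_X=E$. Then $(p,q_p,a_p)$ satisfies all hypotheses of Lemma \ref{LemmaSortOfONLForAnySpace}, so each $p$ is $(\eps,W)$-normed, which is \eqref{item2SortONL}.

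The crux is the uniformity in this last step: a single entourage $W$ must work simultaneously for all $p\in\cP$. This is exactly what the interplay of two facts delivers — that the output $W$ of Lemma \ref{LemmaSortOfONLForAnySpace} depends only on the entourages $E,F$ (together with $\eps,\delta$) and not on the operators involved, and that equi-approximability yields one entourage $F$ valid for the entire family. The rank-one test projections $q_p=\chi_{x_p}$ are supplied uniformly by condition \eqref{item1SortONL}, and the diagonal $\Delta_X$ serves as a common $E$. Apart from matching these quantifiers, the only care needed is the nonzero convention on $\cP$ noted above.
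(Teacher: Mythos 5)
Your proof is correct and follows essentially the same route as the paper's: the implication \eqref{item1SortONL}$\Rightarrow$\eqref{item2SortONL} via Lemma \ref{LemmaSortOfONLForAnySpace} with $q_p=\chi_{x_p}$ and a single approximation entourage from equi-approximability, the trivial \eqref{item2SortONL}$\Rightarrow$\eqref{item3SortONL}, and the same u.l.f.\ counting argument for \eqref{item3SortONL}$\Rightarrow$\eqref{item1SortONL}. Your small touches (using $\gamma/2$ to meet the strict inequality $\|p-a\|<\gamma$, and flagging the nonzero-projection convention, without which \eqref{item2SortONL} would hold vacuously for $p=0$ while \eqref{item1SortONL} fails) are refinements the paper leaves implicit, not a different argument.
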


\begin{proof}
\eqref{item1SortONL}$\Rightarrow$\eqref{item2SortONL}: Fix $\eps>0$. Pick $\delta>0$ and a family $(x_p)_{p\in \cP}$ in $X$ such that $\|p\chi_{x_p}\|>\delta$ for all $p\in \cP $. Let $\gamma>0$ be given by Lemma \ref{LemmaSortOfONLForAnySpace} for $\eps$ and $\delta$. As $\cP$ is equi-approximable, there is $E\in \cE$ such that every $p\in\cP$ is $\gamma$-$E$-approximable. Let $W\in \cE$ be given by Lemma \ref{LemmaSortOfONLForAnySpace} for $E$ and $F=\Delta_X$. 

By our choice of $E$, for each $p\in \cP $, there is $a_p\in \cstu(X)$ with $\supp(a_p)\leq E$ such that $\|p-a_p\|\leq \gamma$. By Lemma \ref{LemmaSortOfONLForAnySpace} applied with $q$ as the projection to the span of~$\delta_{x_p}$, each $p$ is $(\eps,W)$-normed.

\eqref{item2SortONL}$\Rightarrow$\eqref{item3SortONL} is obvious, so we are left with
\eqref{item3SortONL}$\Rightarrow$\eqref{item1SortONL}. Let $\eps\in (0,1)$ and $W$ such that each $p\in\cP$ is $(\eps,W)$-normed. Since $X$ is u.l.f., there is $k$ such that if $A\times A\subseteq W$ then $|A|\leq k$, for all $A\subseteq X$. For each $p\in\cP$, pick $A_p$ with $\norm{p\chi_{A_p}}\geq (1-\eps)$ and $A_p\times A_p\subseteq W$. Then we can find $x\in A_p$ such that $\norm{p\chi_x}\geq\frac{1-\eps}{k}$. As $\eps$ and $k$ are fixed, this completes the proof.
\end{proof}
 
A well-known example shows that it is not true in general that, given an arbitrary u.l.f.\ coarse space $(X,\cE)$, any equi-approximable family of finite rank projections $\cP$ in $\cstu(X)$ must satisfy 
\[
\inf_{p\in \cP }\sup_{x\in X}\|p\chi_{x}\|>0.
\]
Indeed, if the space $(X,d)$ contains a subspace on which the metric is the graph metric given by a sequence $X_n$ of expander graphs (\cite{lubotzky2012expander}), then there is a disjoint sequence $(X_n)_n$ of finite subsets of $X$ such that for every~$n$ 
\[
p_n=\begin{bmatrix}
\frac{1}{|X_n|}&\ldots & \frac{1}{|X_n|}\\
\vdots&\ddots&\vdots\\
\frac{1}{|X_n|}&\ldots&\frac{1}{|X_n|} \end{bmatrix}\in \cB(\ell_2(X_n))\subseteq \cB(\ell_2(X))
\]
defines a projection in $\cB(\ell_2(X_n))$. The spectral gap property of the discrete Laplacian operator associated with the expanders implies that for each $M\subseteq \N$ 
\[
\SOTh\sum_{n\in M}p_n\in \cstu(X), 
\] 
and moreover that this family is equi-approximable as $M$ varies over all subsets of $\N$ (see \cite[pp. 348--349]{HigsonLafforgueSkandalis2002GAFA}).  On the other hand, $\inf_{n\in\N }\sup_{x\in X}\|p_n\chi_x\|=0$. 

This example indicates the need for a workable condition that implies that a family of equi-approximable projections $\cP$ satisfies $\inf_{p\in \cP }\sup_{x\in X}\|p\chi_x\|>0$. This is the content of the next lemma. 

\begin{lemma}\label{LemmaGeoCondForEquiApproxAreBoundedBelow}
Let $X$ be a u.l.f.\ coarse space and let $\cP$ be an equi-approximable family of nonzero finite-rank projections in $\cstu(X)$.

\begin{enumerate}
\item \label{LemmaGeoCondForEquiApproxAreBoundedBelow1} If all ghost block projections in $\cstu(X)$ are compact, then 
\[
\inf_{p\in \cP}\sup_{x\in X}\|p\chi_x\|>0.
\]
\item \label{LemmaGeoCondForEquiApproxAreBoundedBelow2} If all ghost block-rank-one projections in $\cstu(X)$ are compact and each $p\in \cP$ has rank one, then 
\[
\inf_{p\in \cP}\sup_{x\in X}\|p\chi_x\|>0.
\]
\end{enumerate}
\end{lemma}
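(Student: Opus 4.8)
The plan is to prove both parts by contraposition: assuming $\inf_{p\in\cP}\sup_{x\in X}\|p\chi_x\|=0$, I will construct a noncompact ghost block projection (a block-rank-one one for part~\eqref{LemmaGeoCondForEquiApproxAreBoundedBelow2}), which negates the hypothesis; no metrizability is needed. Fix a sequence $(p_n)_n$ in $\cP$ with $\sup_x\|p_n\chi_x\|\to 0$. Two elementary estimates drive the argument. First, a \emph{spreading} estimate: from $|(p_n)_{x,z}|\le\|p_n\chi_z\|$ the matrix entries of $p_n$ tend to $0$ uniformly in $x,z$, which will yield the ghost property. Second, an \emph{escaping} estimate: for any fixed finite $A\subseteq X$, $\|\chi_A p_n\|^2=\|\chi_A p_n\chi_A\|\le\tr(\chi_A p_n\chi_A)=\sum_{x\in A}\|p_n\chi_x\|^2\le|A|\,(\sup_x\|p_n\chi_x\|)^2\to 0$, so the mass of $p_n$ escapes every finite set.

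Next I would construct, by induction on $k$, pairwise disjoint finite sets $X_k\subseteq X$ and genuine projections $r_k$ supported on $X_k\times X_k$ with $\rank r_k=\rank p_{n_k}$ (so rank one in part~\eqref{LemmaGeoCondForEquiApproxAreBoundedBelow2}). Fix a summable sequence $(\eta_k)_k$ of sufficiently small positive reals. At stage $k$, set $A=X_1\cup\dots\cup X_{k-1}$; using the escaping estimate choose $n_k>n_{k-1}$ so that $\|\chi_A p_{n_k}\|,\|p_{n_k}\chi_A\|<\eta_k$, and since $p_{n_k}$ is finite rank pick a finite $F_k$ with $\|p_{n_k}-\chi_{F_k}p_{n_k}\chi_{F_k}\|<\eta_k$; put $X_k=F_k\setminus A$. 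A short computation splitting $\chi_{F_k}=\chi_{X_k}+\chi_{F_k\cap A}$ gives $\|p_{n_k}-\chi_{X_k}p_{n_k}\chi_{X_k}\|=O(\eta_k)$, so the self-adjoint compression $\chi_{X_k}p_{n_k}\chi_{X_k}$ lies within $O(\eta_k)$ of the projection $p_{n_k}$; letting $r_k$ be its spectral projection for eigenvalues $\ge 1/2$ produces a projection supported on $X_k$ with $\|r_k-p_{n_k}\|=O(\eta_k)$ and the same rank as $p_{n_k}$.

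Set $r=\SOTh\sum_k r_k$. As the $X_k$ are disjoint and finite and infinitely many $r_k$ are nonzero, $r$ is a noncompact block projection, block-rank-one in part~\eqref{LemmaGeoCondForEquiApproxAreBoundedBelow2}; and the spreading estimate combined with $\|r_k-p_{n_k}\|\to 0$ forces $\sup_{x,z}|(r_k)_{x,z}|\to 0$, whence $r$ is a ghost. The one substantial point, which I expect to be the main obstacle, is showing $r\in\cstu(X)$: the blocks $X_k$ need not have uniformly bounded propagation (witness the expander example preceding the lemma), so membership cannot be read off the supports and must be extracted from equi-approximability instead.

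To that end, fix $\eps>0$ and use equi-approximability of $\cP$ to obtain a single $E\in\cE$ with each $p_{n_k}$ being $\eps$-$E$-approximable, say $\|p_{n_k}-b_k\|\le\eps$ and $\supp(b_k)\subseteq E$. Then $\chi_{X_k}b_k\chi_{X_k}$ is supported in $E$ and $\|r_k-\chi_{X_k}b_k\chi_{X_k}\|=\|\chi_{X_k}(r_k-b_k)\chi_{X_k}\|\le\|r_k-p_{n_k}\|+\eps=O(\eta_k)+\eps$. Because the blocks act on orthogonal subspaces, summing the approximants over $k$ produces an operator supported in $E$ whose distance to $r$ is $\sup_k(O(\eta_k)+\eps)$. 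The only nuisance is that this supremum may be spoiled by the finitely many initial terms where $O(\eta_k)>\eps$; I would handle these by keeping the corresponding blocks $r_k$ exactly (their union has finite, hence controlled, support) and approximating only the tail. This exhibits $r$ as $O(\eps)$-$E'$-approximable for every $\eps>0$, so $r\in\cstu(X)$, completing the contradiction. The geometric content lies entirely in the escaping estimate that forces the blocks apart; the remaining labor is the accuracy bookkeeping just described.
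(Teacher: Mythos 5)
Your strategy coincides with the paper's: assume the infimum vanishes, use the escaping estimate to extract disjoint finite blocks $X_k$ carrying most of $p_{n_k}$, upgrade the compressions $\chi_{X_k}p_{n_k}\chi_{X_k}$ to genuine projections $r_k$ by functional calculus, and assemble a noncompact ghost block projection, with equi-approximability supplying membership in $\cstu(X)$. There is, however, one step that fails in the stated generality. To handle the finitely many initial blocks where $O(\eta_k)>\eps$, you keep those $r_k$ exactly and assert that their union has ``finite, hence controlled, support''. For a general u.l.f.\ coarse space this is false: a coarse structure need not contain any off-diagonal singleton $\{(x,z)\}$, so finite subsets of $X\times X$ need not be entourages --- this is exactly why the paper remarks that $\cstu(X)$ contains the compact operators only when $(X,\cE)$ is connected (e.g.\ metrizable). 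Since the lemma is stated, and later used (for instance in the implication \eqref{ItemThmBlockProjIFFONLproj2.5}$\Rightarrow$\eqref{ItemThmBlockProjIFFONLproj3} of Theorem \ref{ThmBlockProjIFFONLproj}), for arbitrary u.l.f.\ coarse spaces, your membership argument as written only covers the connected/metrizable case.

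The gap is repairable with material you already have. Each $r_k$ is a spectral projection of $\chi_{X_k}p_{n_k}\chi_{X_k}$, which lies in $\cstu(X)$, and the spectral projection is realized by continuous functional calculus; hence $r_k\in\cstu(X)$, so the finite head sum $\sum_{k\le K}r_k$ lies in $\cstu(X)$ and is therefore itself $\eps$-approximable by an operator whose support is some entourage $E''$, after which $E\cup E''$ does the job. Alternatively, one can avoid the head/tail bookkeeping entirely, which is what the paper does: first show that the sum of compressions $p'=\SOTh\sum_k\chi_{X_k}p_{n_k}\chi_{X_k}$ belongs to $\cstu(X)$ --- here equi-approximability applies uniformly to all blocks simultaneously, with no $O(\eta_k)$ loss --- and then note that $r-p'=\sum_k\bigl(r_k-\chi_{X_k}p_{n_k}\chi_{X_k}\bigr)$ is a norm-convergent series whose terms lie in $\cstu(X)$ (again by functional calculus), so $r\in\cstu(X)$. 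This restructuring also gives the ghost property of $r$ for free, since $r$ is then a compact perturbation of the ghost $p'$.
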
 
 
\begin{proof}We will prove \eqref{LemmaGeoCondForEquiApproxAreBoundedBelow1} and \eqref{LemmaGeoCondForEquiApproxAreBoundedBelow2} simultaneously, indicating the only difference in the proofs in the appropriate moment. By contradiction, pick, for each $n\in\N$, $p_n\in\cP$ such that $\|p_n\chi_x\|<2^{-n}$ for all $x\in X$. Notice that for each finite $A\subseteq X$, $\lim_{n\to \infty}p_n\chi_A=0$. In fact, 
for each $n$, $\|p_n\chi_A\|\leq \sum_{x\in A}\|p_n\chi_x\|\leq 2^{-n}|A|$, and as $A$ is fixed, the conclusion follows.

As each $p_n$ is finite dimensional, we can go to a subsequence and find (modulo reindexing) a disjoint sequence $(X_n)_n$ of finite subsets of $X$ such that 
\begin{equation}\label{Eq.smallpn}
\|p_n-\chi_{X_n}p_n\chi_{X_n}\|<2^{-n-1}.
\end{equation}
for all $n\in\N$. Since $\mathcal P$ is equi-approximable and each $\chi_{X_n}$ has propagation zero, the sequence $(\chi_{X_n}p_n\chi_{X_n})_n$ is equi-approximable. Moreover, as $(X_n)_n$ are disjoint, this implies that
\[
p'=\SOTh\sum_n \chi_{X_n}p_n\chi_{X_n}\in\cstu(X).
\]
Notice that $p'$ is a ghost. 
Each one of the operators $a_n=\chi_{X_n} p_n \chi_{X_n}$ is positive, and by \eqref{Eq.smallpn} these operators satisfy $\lim_{n\to \infty}\|a_n-a_n^2\|=0$. By standard continuous functional calculus argument using \eqref{Eq.smallpn}, for each $n\in\N$ we can find a projection $q_n\in\mathcal B(\ell_2(X_n))$ such that 
\[
\norm{q_n- \chi_{X_n}p_n\chi_{X_n}}<2^{-n+1}.
\]
In the situation of \eqref{LemmaGeoCondForEquiApproxAreBoundedBelow2}, $p_n$ has rank one, and therefore $\chi_{X_n} p_n \chi_{X_n}$ has rank one, and the same applies to $q_n$.

As $p_n$ is nonzero, $q_n$ is nonzero, and therefore 
\[
q=\SOTh\sum_nq_n
\] is a noncompact block projection. Moreover \[
p'-q=\sum_n (\chi_{X_n}p_n\chi_{X_n}-q_n)
\]
in norm, so, as each $\chi_{X_n}p_n\chi_{X_n}-q_n$ is compact, $p'-q$ is also compact. As $p'$ is a ghost, so is $q$. This contradicts the fact that all ghost block projections in $\cstu(X)$ are compact.
\end{proof} 


\begin{proof}
[Proof of Theorem \ref{ThmBlockProjIFFONLproj}] 
\eqref{ItemThmBlockProjIFFONLproj2.5}$\Rightarrow$\eqref{ItemThmBlockProjIFFONLproj3}: 
If all ghost block projections in $\cstu(X)$ are compact, then Lemma \ref{LemmaGeoCondForEquiApproxAreBoundedBelow} implies the assumptions of Lemma \ref{LemmaSortONLForEquiApproxFamOfProj}, and therefore $X$ has ONL for finite rank projections. 

Suppose now that $(X,\cE)$ is metrizable, say $\cE=\cE_d$ for some metric $d$ on~$X$. 

Clearly \eqref{ItemThmBlockProjIFFONLproj2.5}$\Rightarrow$\eqref{ItemThmBlockProjIFFONLproj2}. Conversely, suppose \eqref{ItemThmBlockProjIFFONLproj2} holds and \eqref{ItemThmBlockProjIFFONLproj2.5} fails, and let $\sum_n p_n$ be a noncompact ghost block projection in $X$. Then all $p_n$ are nonzero, and we can find an infinite $M\subseteq \bbN$ such that $\sum_{n\in M} p_n$ belongs to a sparse subspace $X’$ of $X$, and it is therefore compact; contradiction. 

Since \eqref{ItemThmBlockProjIFFONLproj1}$\Rightarrow$\eqref{ItemThmBlockProjIFFONLproj2} is trivial, we are left to show that \eqref{ItemThmBlockProjIFFONLproj3}$\Rightarrow$\eqref{ItemThmBlockProjIFFONLproj2.5} and \eqref{ItemThmBlockProjIFFONLproj2}$\Rightarrow$\eqref{ItemThmBlockProjIFFONLproj1}. 
 
\eqref{ItemThmBlockProjIFFONLproj3}$\Rightarrow$\eqref{ItemThmBlockProjIFFONLproj2.5}: Fix a noncompact block projection with respect to a disjoint sequence $(X_n)_n$ of finite subsets of $X$, say $p\in \cstu(X)$. For each $n\in\N$, let $p_n=\chi_{X_n}p\chi_{X_n}$; so $p=\SOTh\sum_np_n$. 

\begin{claim}\label{Claim3.7}
We have that $\SOTh\sum_{n\in M}p_n\in \cstu(X)$ for all $M\subseteq \N$.
\end{claim}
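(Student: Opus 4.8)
The plan is to identify the strong-operator sum $\SOTh\sum_{n\in M}p_n$ with a two-sided compression of $p$ that manifestly lies in $\cstu(X)$. Writing $X_M=\bigcup_{n\in M}X_n$ and recalling that $\chi_{X_M}\in\ell_\infty(X)\subseteq\cstu(X)$ is a diagonal projection, I would aim to prove the identity
\[
\SOTh\sum_{n\in M}p_n=\chi_{X_M}p\chi_{X_M},
\]
after which membership in $\cstu(X)$ is automatic, since the right-hand side is a product of three elements of the C*-algebra $\cstu(X)$.

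First I would exploit the block structure. Because $p=\SOTh\sum_n\chi_{X_n}p\chi_{X_n}$, its support is contained in $\bigsqcup_n(X_n\times X_n)$, so the cross terms vanish: $\chi_{X_n}p\chi_{X_m}=0$ whenever $n\neq m$. Using $\chi_{X_F}=\sum_{n\in F}\chi_{X_n}$ for finite $F\subseteq M$, this gives the finite-sum identity
\[
\chi_{X_F}p\chi_{X_F}=\sum_{n,m\in F}\chi_{X_n}p\chi_{X_m}=\sum_{n\in F}\chi_{X_n}p\chi_{X_n}=\sum_{n\in F}p_n,
\]
which involves no convergence and reduces the problem to a limiting statement.

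Next I would pass to the limit along finite $F\uparrow M$. Since $\chi_{X_F}\to\chi_{X_M}$ in the strong operator topology and $\|\chi_{X_F}p\chi_{X_F}\|\leq\|p\|$ is bounded uniformly in $F$, the splitting
\[
\chi_{X_F}p\chi_{X_F}\xi-\chi_{X_M}p\chi_{X_M}\xi=\chi_{X_F}\big(p\chi_{X_F}\xi-p\chi_{X_M}\xi\big)+\big(\chi_{X_F}-\chi_{X_M}\big)p\chi_{X_M}\xi
\]
shows $\chi_{X_F}p\chi_{X_F}\to\chi_{X_M}p\chi_{X_M}$ strongly. Hence the SOT-limit of the partial sums $\sum_{n\in F}p_n$ exists and equals $\chi_{X_M}p\chi_{X_M}$, which establishes the displayed identity and the claim.

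The only genuinely delicate point is this last strong-operator convergence, i.e.\ verifying that the limit of the partial sums exists at all rather than merely that the candidate $\chi_{X_M}p\chi_{X_M}$ lies in the algebra; everything else is bookkeeping forced by the block structure. I expect no further obstacle, and note that the claim is exactly the hypothesis required to invoke Lemma \ref{LemmaUnifApprox} and deduce that the family $(\SOTh\sum_{n\in M}p_n)_{M\subseteq\N}$ is equi-approximable.
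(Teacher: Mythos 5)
Your proof is correct and follows essentially the same route as the paper: the paper's own proof consists precisely of the identity $\SOTh\sum_{n\in M}p_n=\chi_{X_M}p\chi_{X_M}$ with $X_M=\bigcup_{n\in M}X_n$, followed by the observation that the right-hand side is a product of elements of $\cstu(X)$. The only difference is that you verify the cross-term cancellation and the SOT convergence explicitly, which the paper leaves implicit.
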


\begin{proof}
For $M\subseteq\N$, let $X_M=\bigcup_{n\in M} X_n$. Then $\SOTh\sum_{n\in M}p_n=\chi_{X_M}p\chi_{X_M}$. Since $p\in\cstu(X)$, we have the thesis.
\end{proof}

By Claim~\ref{Claim3.7} and Lemma \ref{LemmaUnifApprox}, $(p_n)_n$ is equi-approximable. Hence, by our hypothesis, there is $r>0$ such that every $p_n$ is $(1/2,r)$-normed. For each $n\in\N$, let $A_n\subseteq X$ be a such that $\diam(A_n)\leq r$ and $\|p_n\chi_{A_n}\|\geq 1/2$. As $X$ is u.l.f., $k=\sup_{n\in\N}|A_n|<\infty$. Suppose for a contradiction that $p$ is a ghost. Then there is $n\in\N$ such that $\|p_n\delta_x\|<1/(2k)$ for all $x\in X$. So, \[\|p_n\chi_{A_n}\|\leq\sum_{x\in A_n}\|p_n\delta_x\|< 1/2;\]contradiction.

\eqref{ItemThmBlockProjIFFONLproj2}$\Rightarrow$\eqref{ItemThmBlockProjIFFONLproj1}: Let $X'=\bigsqcup_nX_n$ be a sparse subspace of $X$ and let $p\in \cstu(X')$ be a ghost projection. Since $X'$ is sparse, if $a$ is a finite propagation operator in $\cstu(X')$ then there is $n_0\in\N$ such that \[a-\SOTh\sum_{m\geq n_0}\chi_{X_m}a\chi_{X_m}\] is compact. This shows that passing to the Calkin algebra, we have
\[
\cstu(X')/\mathcal K(\ell_2(X'))\subseteq\Big(\prod_{n} \cK(\ell_2(X_n))\Big)/ \cK(\ell_2(X')).
\]
Therefore, as 
\[
\Big(\prod_{n} \cK(\ell_2(X_n))\Big)/ \cK(\ell_2(X'))=\prod_{n} \cK(\ell_2(X_n))/\bigoplus\cK(\ell_2(X_n))
\]
 and since all projections in \[\prod_n\mathcal \cK(\ell_2(X_n))/\bigoplus \cK(\ell_2(X_n))\] lift to a projection in $\prod_n \cK(\ell_2(X_n))$ (see \cite[Lemma~3.1.13]{Fa:STCstar}), we can pick a projection $p'\in \prod_n \mathcal K(\ell_2(X_n))$ such that $p-p'$ is compact. As $\mathcal K(\ell_2(X'))\subseteq\cstu(X')$ and $p\in \cstu(X')$, we have that $p'\in\cst(X')$. As $p$ is a ghost, so is $p'$. Therefore, $p'$ is a ghost block projection in $\cstu(X')$ and, by our hypothesis, $p'$ must be compact. This gives us that $p$ is compact. Since $p$ was an arbitrary ghost projection in $\cstu(X’)$ for an arbitrary sparse subspace $X’$ of $X$, we are done.
\end{proof}

\begin{proof}[Proof of Theorem \ref{ThmBlockRank1ProjIFFONLproj}]
The proof is analogous to that of Theorem \ref{ThmBlockProjIFFONLproj}, with `finite rank projections' replaced with `rank one projections' everywhere and (4) replaced with (3). 
\end{proof}

\section{Applications I: Rigidity and Cartan masas}\label{SectionCoSep}

In this section we use Theorem \ref{ThmBlockRank1ProjIFFONLprojINTRO} to prove Theorems \ref{Thm.PreservationMetric}, \ref {Thm.CartanCosep}, and \ref{Thm.GhostsAndCosep}. The conclusions of these theorems were previously known to hold only in presence of ONL. 

In order to prove Theorem \ref{Thm.PreservationMetric}, we will need some technical results already proven in the literature. For the reader's convenience, we state those results below. Given coarse spaces $(X,\cE)$ and $(Y,\cF)$, an isomorphism $\Phi\colon\cstu(X)\to \cstu(Y)$, $x\in X$, $y\in Y$, and $\eta>0$, we let
\begin{itemize}
\item$ X_{y,\eta} \coloneqq \{z\in X\mid \|\Phi^{-1}(\chi_y)\delta_z\|\geq \eta\}$, and 
\item $Y_{x,\eta} \coloneqq \{z\in Y\mid \|\Phi(\chi_x)\delta_z\|\geq \eta\}$.
\end{itemize}

We isolate two results from \cite{BaudierBragaFarahKhukhroVignatiWillett2021uRaRig} and \cite{BragaFarahVignati2020AnnInstFour}.
 
\begin{lemma}[{\cite[Corollary 5.3]{BaudierBragaFarahKhukhroVignatiWillett2021uRaRig}}]
Let $(X,\cE)$ and $(Y,\cF)$ be u.l.f.\ coarse spaces and $\Phi\colon \cstu(X)\to \cstu(Y)$ be an isomorphism. If $X$ is metrizable, then for all $\eps>0$ there is $\eta>0$ such that $\|\Phi (\chi_x)(1_{\ell_2(Y)}- \chi_{Y_{x,\eta}})\|\leq \eps$ for all $x\in X$.\label{LemmaCor.Expanding}
\end{lemma}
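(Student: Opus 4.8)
The plan is to exploit that $\Phi$ sends compact operators to compact operators and preserves the rank of projections (an isomorphism of uniform Roe algebras restricts to an isomorphism of the ideals of compact operators), so that each $\Phi(\chi_x)$ is a \emph{rank-one} projection. Writing it as the orthogonal projection $P_{\xi_x}$ onto $\C\xi_x$ for a unit vector $\xi_x\in\ell_2(Y)$, one has $\|\Phi(\chi_x)\delta_z\|=|\xi_x(z)|$, so $Y_{x,\eta}=\{z\in Y\mid|\xi_x(z)|\ge\eta\}$ and, passing to adjoints, $\|\Phi(\chi_x)(1_{\ell_2(Y)}-\chi_{Y_{x,\eta}})\|=\|\chi_{Y\sm Y_{x,\eta}}\xi_x\|$. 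Thus it suffices to choose $\eta$ so that the mass of $\xi_x$ on $\{z\mid|\xi_x(z)|<\eta\}$ is at most $\eps^2$, uniformly in $x$.

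I would assemble two ingredients. First, a uniform lower bound on the peak of $\xi_x$: since $X$ is metrizable, Corollary \ref{CorRigPaper} supplies $\delta>0$ and $f\colon X\to Y$ with $|\xi_x(f(x))|=\|\Phi(\chi_x)\delta_{f(x)}\|>\delta$ for all $x$; set $y_x:=f(x)$. Second, equi-approximability of $(\Phi(\chi_x))_{x\in X}$: the family $(\chi_x)_x$ has propagation $0$, hence is trivially equi-approximable, and since $\Phi$ approximately preserves propagation (the substantive input from \cite{BaudierBragaFarahKhukhroVignatiWillett2021uRaRig}), its image is equi-approximable too. I would fix a parameter $\eps'>0$ to be calibrated last, obtain $F\in\cF$ such that each $\Phi(\chi_x)$ is $\eps'$-$F$-approximable by some $b_x$ with $\supp(b_x)\subseteq F$ and $\|\Phi(\chi_x)-b_x\|\le\eps'$, and set $N:=\sup_{y\in Y}|\{w\mid(w,y)\in F\}|$, which is finite by u.l.f.\ (applied to $F^{-1}\in\cF$).

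The key step localizes $\xi_x$ on a uniformly finite set. Applying $\Phi(\chi_x)=P_{\xi_x}$ to $\delta_{y_x}$ gives $\Phi(\chi_x)\delta_{y_x}=\overline{\xi_x(y_x)}\,\xi_x$, whence $\|\overline{\xi_x(y_x)}\,\xi_x-b_x\delta_{y_x}\|\le\eps'$; dividing by $|\xi_x(y_x)|>\delta$ shows that $w_x:=\overline{\xi_x(y_x)}^{-1}b_x\delta_{y_x}$ satisfies $\|\xi_x-w_x\|\le\eps'/\delta$. Crucially, $w_x$ is supported on $B_x:=\{w\mid(w,y_x)\in F\}$, with $|B_x|\le N$, so $\|\chi_{Y\sm B_x}\xi_x\|\le\eps'/\delta$. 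Splitting $\{z\mid|\xi_x(z)|<\eta\}$ according to membership in $B_x$ then gives
\[
\|\chi_{Y\sm Y_{x,\eta}}\xi_x\|^2\le\sum_{z\in B_x,\,|\xi_x(z)|<\eta}|\xi_x(z)|^2+\|\chi_{Y\sm B_x}\xi_x\|^2\le N\eta^2+(\eps'/\delta)^2.
\]
Choosing $\eps':=\delta\eps/\sqrt2$ first (legitimate, since $\delta$ is independent of $\eps'$) and then $\eta:=\eps/\sqrt{2N}$ makes the right-hand side at most $\eps^2$, as required.

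The main obstacle is the order of quantifiers, which is exactly what defeats the more naive estimate bounding $\|\chi_{Y\sm Y_{x,\eta}}\Phi(\chi_x)\chi_{Y\sm Y_{x,\eta}}\|$ by a Schur test: there the error $\eps'$ gets multiplied by the band size $N$, and because $N=N(\eps')$ one cannot force $N\eps'$ to be small. The localization argument above circumvents this by controlling $\xi_x$ through the \emph{single} vector $b_x\delta_{y_x}$ (one column of $b_x$) rather than entrywise, so that $N$ multiplies only $\eta^2$ — a quantity chosen last — while the genuine error enters as $(\eps'/\delta)^2$ with $\delta$ fixed in advance. The point that still demands care is the equi-approximability of $(\Phi(\chi_x))_x$, i.e.\ that $\Phi$ uniformly approximately preserves propagation on the generators; this, together with the peak bound from Corollary \ref{CorRigPaper}, is where metrizability of $X$ and u.l.f.-ness are genuinely used.
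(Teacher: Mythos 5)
First, a point of comparison: the paper itself gives no proof of this lemma --- it is imported verbatim from \cite[Corollary 5.3]{BaudierBragaFarahKhukhroVignatiWillett2021uRaRig} --- so your attempt has to be judged against the argument in that source, which works entirely on the \emph{metrizable} side of the isomorphism. Your reductions are correct as far as they go: $\Phi(\chi_x)$ is indeed a rank-one projection $P_{\xi_x}$, the identity $\|\Phi(\chi_x)(1-\chi_{Y_{x,\eta}})\|=\|\chi_{Y\sm Y_{x,\eta}}\xi_x\|$ is right, the peak bound from Corollary~\ref{CorRigPaper} is legitimately available (it is proved in the paper independently of this lemma), and, granting your two ingredients, the localization estimate $\|\chi_{Y\sm Y_{x,\eta}}\xi_x\|^2\le N\eta^2+(\eps'/\delta)^2$ and the order of quantifiers are all correct. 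The genuine gap is your second ingredient: equi-approximability of the family $(\Phi(\chi_x))_{x\in X}$ \emph{inside} $\cstu(Y)$. This family lives on the $Y$ side, and $Y$ is an arbitrary u.l.f.\ \emph{coarse} space; the paper's only tool for producing equi-approximable families, Lemma~\ref{LemmaUnifApprox}, requires the ambient space to be metric. (Indeed, if $Y$ were metrizable your claim would follow from Lemma~\ref{LemmaUnifApprox} applied in $\cstu(Y)$, using strong continuity of $\Phi$ to see that all sub-sums $\Phi(\chi_M)$ lie in $\cstu(Y)$ --- but the entire force of this lemma, and the reason the paper needs it for Theorems~\ref{Thm.PreservationMetric} and~\ref{Thm.CartanCosep}, is the non-metrizable case, where metrizability of $Y$ is part of what is ultimately being \emph{proved}.) The assertion that ``$\Phi$ approximately preserves propagation'' uniformly is not a quotable black box: no such transfer theorem exists in this generality, and it essentially begs the question. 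Note that your own conclusion only yields that each $\Phi(\chi_x)$ is $2\eps$-approximable by an operator supported in $Y_{x,\eta}\times Y_{x,\eta}$, a block of cardinality at most $\eta^{-2}$; equi-approximability demands in addition that all these blocks sit inside a \emph{single} entourage $F\in\cF$, and there is no a priori reason for $\bigcup_x Y_{x,\eta}\times Y_{x,\eta}$ to be controlled in a non-metrizable coarse structure. This is why the paper, whenever it invokes Lemma~\ref{LemmaUnifApprox}, does so for the family $(\Phi^{-1}(\chi_y))_{y\in Y}\subseteq\cstu(X)$ on the metrizable side (as in Claim~\ref{claim:bigonsmall}), never for $(\Phi(\chi_x))_{x}\subseteq\cstu(Y)$.

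The way the cited proof circumvents this is to push everything to the $X$ side before estimating: since $\Phi$ is isometric and strongly continuous,
\[
\|\Phi(\chi_x)(1-\chi_{Y_{x,\eta}})\|=\big\|\chi_x\big(1-\Phi^{-1}(\chi_{Y_{x,\eta}})\big)\big\|,
\qquad
\Phi^{-1}(\chi_{Y_{x,\eta}})=\SOTh\sum_{y\in Y_{x,\eta}}\Phi^{-1}(\chi_y),
\]
so the lemma becomes a statement about the family $\{\Phi^{-1}(\chi_S)\mid S\subseteq Y\}\subseteq\cstu(X)$, which \emph{is} equi-approximable by Lemma~\ref{LemmaUnifApprox} because $X$ is metrizable; the remaining (nontrivial) work is done by the concentration-inequality machinery of \cite{BaudierBragaFarahKhukhroVignatiWillett2021uRaRig}, not by a propagation-transfer principle. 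If you want to salvage your write-up, this change of side is the repair to aim for. A smaller point: your justification of rank-one-ness (``an isomorphism restricts to an isomorphism of the ideals of compact operators'') tacitly assumes $\cK(\ell_2(Y))\subseteq\cstu(Y)$, which holds only for connected coarse structures; the clean argument is that $\Phi(\chi_x)$ is a minimal projection in the masa $\Phi(\ell_\infty(X))$ of $\cstu(Y)$, hence has rank one by the argument of Lemma~\ref{L.WW}.
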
 

\begin{lemma}[{\cite[Lemma 4.7]{BragaFarahVignati2020AnnInstFour}}]
Let $(X,\mathcal{E})$ and $(Y,\mathcal{F})$ be u.l.f.\ coarse spaces, $\Phi\colon\cstu(X)\to \cstu(Y)$ be an isomorphism, and let $f\colon X\to Y$ be such that $\inf_{x\in X}\|\Phi(\chi_x)\delta_{f(x)}\|>0$. Then the following holds:
\begin{enumerate} 
\item If for all $\eps>0$ there is $\eta>0$ such that 
\[
\|\Phi (\chi_x)(1_{\ell_2(Y)}- \chi_{Y_{x,\eta}})\|\leq \eps,
\] 
for all $x\in X$, then $f$ is expanding.
\item If for all $\eps>0$ there is $\eta>0$ such that 
\[
\|\Phi^{-1}(\chi_{f(x)})(1_{\ell_2(X)}- \chi_{X_{f(x),\eta}})\|\leq \eps,
\]
for all $x\in X$, then $f$ is coarse.\qed
\end{enumerate} \label{LemmaTheMapIsCoarse.GenCoarseSp}
\end{lemma}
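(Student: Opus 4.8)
The plan is to treat the two items by a single, symmetric argument run by contradiction, whose engine is the \emph{uniform quasi-locality} of an isomorphism of uniform Roe algebras together with the respective concentration hypotheses. Throughout I would fix $\delta>0$ with $\|\Phi(\chi_x)\delta_{f(x)}\|>\delta$ for all $x$, and first record the elementary identity
\[
\|\Phi(\chi_x)\delta_y\|=\|\Phi(\chi_x)\chi_y\|=\|\chi_x\Phi^{-1}(\chi_y)\|=\|\Phi^{-1}(\chi_y)\delta_x\|,
\]
valid for all $x\in X$, $y\in Y$ and any $\ast$-isomorphism $\Phi$ (the middle equality follows from $\|pq\|^2=\|qpq\|$ applied to the projections $p=\Phi(\chi_x)$ and $q=\chi_y$, and then transported by $\Phi^{-1}$). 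This shows $y\in Y_{x,\eta}\Leftrightarrow x\in X_{y,\eta}$ and, in particular, $f(x)\in Y_{x,\delta}$ and $x\in X_{f(x),\delta}$, so that the graph of $f$ sits inside the ``thick parts'' of the support sets.

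The second ingredient I would isolate is that $\Phi$ (and $\Phi^{-1}$) sends uniformly controlled contractions to an equi-approximable family: for every $E\in\cE$ and $\eps>0$ there is $F\in\cF$ such that $\Phi(a)$ is $\eps$-$F$-approximable for every contraction $a\in\cstu(X)$ with $\supp(a)\subseteq E$. This is the uniform version of the statement that $\Phi$ approximately preserves finite propagation, and is of the same nature as Lemma \ref{LemmaUnifApprox}. The concentration hypotheses are then used to replace the genuinely infinite objects $\Phi(\chi_x)$ and $\Phi^{-1}(\chi_{f(x)})$ by their compressions to the controlled sets $Y_{x,\eta}$ and $X_{f(x),\eta}$: up to an error $\eps$ these projections are supported on $Y_{x,\eta}\times Y_{x,\eta}$, respectively $X_{f(x),\eta}\times X_{f(x),\eta}$.

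For item (2) I would argue by contradiction. If $f$ is not coarse there are $E\in\cE$ and pairs $(x_k,z_k)\in E$ with $(f(x_k),f(z_k))$ eventually leaving every entourage of $Y$; since the diagonal belongs to $\cF$, this forces $f(x_k)\neq f(z_k)$ eventually, so $\chi_{f(x_k)}$ and $\chi_{f(z_k)}$ are orthogonal and hence so are $\Phi^{-1}(\chi_{f(x_k)})$ and $\Phi^{-1}(\chi_{f(z_k)})$. By the part-(2) hypothesis each of these is, up to $\eps$, supported on $X_{f(x_k),\eta}\ni x_k$, respectively $X_{f(z_k),\eta}\ni z_k$, and the relation $(x_k,z_k)\in E$ anchors the two thick parts within a single entourage of $X$. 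Feeding the escape of $(f(x_k),f(z_k))$ to infinity through uniform quasi-locality of $\Phi^{-1}$ should force the supports of $\Phi^{-1}(\chi_{f(x_k)})$ and $\Phi^{-1}(\chi_{f(z_k)})$ apart, contradicting that they overlap within a fixed entourage while each retains mass $>\delta$ at $x_k$, respectively $z_k$. Item (1) is then the mirror image: assuming $f$ not expanding produces $F\in\cF$ and pairs with $(f(x_k),f(z_k))\in F$ but $(x_k,z_k)$ leaving every entourage of $X$, and one runs the same argument with the roles of $X,Y$ and of $\Phi,\Phi^{-1}$ interchanged, now using the concentration of $\Phi(\chi_{x_k})$ on $Y_{x_k,\eta}$.

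I expect the main obstacle to be precisely the quantitative step where the concentration hypothesis is converted into a contradiction: one must show that uniform quasi-locality \emph{reflects} large distances, i.e.\ that when $(f(x_k),f(z_k))$ leaves every entourage the compressed projections $\chi_{X_{f(x_k),\eta}}\Phi^{-1}(\chi_{f(x_k)})\chi_{X_{f(x_k),\eta}}$ can be separated by an operator of controlled propagation, so that their supports cannot both meet a fixed entourage around $x_k$ and $z_k$. Getting the quantifiers in the right order --- first choosing $\eps$ from $\delta$, then $\eta$ from the hypothesis, then the entourage from quasi-locality, all uniformly in $k$ --- is the delicate part; by contrast, the algebraic identity above and the orthogonality of the $\chi_{f(x_k)}$ are the easy inputs, and they are exactly what keeps the argument from needing any bound on the ranks of $\Phi(\chi_x)$.
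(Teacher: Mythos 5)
The paper does not actually prove this lemma (it is quoted, with a \qed, from \cite[Lemma 4.7]{BragaFarahVignati2020AnnInstFour}), so your attempt has to be judged on its own merits, and there it has a fatal gap. Your opening identity $\|\Phi(\chi_x)\delta_y\|=\|\Phi^{-1}(\chi_y)\delta_x\|$ is correct, but the engine of your argument --- ``uniform quasi-locality'' of $\Phi$ and $\Phi^{-1}$, i.e.\ that for every $E\in\cE$ and $\eps>0$ there is $F\in\cF$ such that $\Phi(a)$ is $\eps$-$F$-approximable for \emph{every} contraction $a$ with $\supp(a)\subseteq E$ --- is not an available ingredient here. That property is obtained by a Baire-category argument (this is exactly the nature of Lemma \ref{LemmaUnifApprox} you invoke), and that argument requires the coarse structure on the approximating side to have a countable cofinal family, i.e.\ to be metrizable. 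The present lemma is stated for arbitrary u.l.f.\ coarse spaces, and its concentration hypotheses are precisely the substitute for this unavailable uniformity: in the paper they are procured only under extra assumptions (metrizability of $X$ for Lemma \ref{LemmaCor.Expanding}; absence of block-rank-one ghost projections, via Theorem \ref{ThmBlockRank1ProjIFFONLproj}, for Claim \ref{claim:bigonsmall}). Worse, if your ingredient were legitimate the lemma would be immediate and its hypotheses redundant: since $\Phi^{-1}(\chi_{f(x)})e_{xz}\Phi^{-1}(\chi_{f(z)})$ has norm $\|\Phi^{-1}(\chi_{f(x)})\delta_x\|\cdot\|\Phi^{-1}(\chi_{f(z)})\delta_z\|>\delta^2$ (where $e_{xz}$ is the partial isometry sending $\delta_z$ to $\delta_x$), the $(f(x),f(z))$-entry of $\Phi(e_{xz})$ exceeds $\delta^2$, and any uniform $\delta^2/2$-approximation of the family $\{\Phi(e_{xz}):(x,z)\in E\}$ by $F$-supported operators yields coarseness at once; symmetrically for expansion. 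So your proposal assumes away the entire difficulty the lemma exists to address.

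Two further steps would also fail as written. First, when $\cF$ is not countably generated, the negation of coarseness produces only a net indexed by $\cF$, not a sequence of pairs $(x_k,z_k)$ with $(f(x_k),f(z_k))$ ``eventually leaving every entourage''; your sequential extraction is illegitimate exactly in the non-metrizable case, which is the only case not already covered by the metric-space literature. Second, the contradiction you aim for does not materialize: orthogonality of $\Phi^{-1}(\chi_{f(x_k)})$ and $\Phi^{-1}(\chi_{f(z_k)})$ is perfectly compatible with both rank-one projections being concentrated on the \emph{same} finite set containing $x_k$ and $z_k$ (consider vectors $(\delta_{x_k}\pm\delta_{z_k})/\sqrt{2}$), so nothing ``forces the supports apart''; the genuine obstruction is the cross-term cancellation problem, which you flag as ``the delicate part'' but leave entirely open. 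A correct argument runs in the opposite direction and needs no uniformity over $\cF$: it uses that for a \emph{single} $b\in\cstu(Y)$ and $\gamma>0$ the set $\{(y,y')\mid |b_{y,y'}|\geq\gamma\}$ is automatically controlled, decomposes $E$ into finitely many partial translations $v$ (u.l.f.), and then uses the concentration hypothesis together with Bessel's inequality for the orthonormal vectors spanning the ranges of the $\Phi^{-1}(\chi_y)$, the bound $|X_{y,\eta}|\leq \eta^{-2}$, and the uniform finite-to-one-ness of $f$ coming from \eqref{EqfWitRig}, to refine each $v$ by a bounded-degree colouring into finitely many $v_i$ for which the cross terms in $\langle v_i\xi_z,\xi_{t(z)}\rangle$ are below $\delta^2/2$; the pairs $(f(t(z)),f(z))$ then lie in the controlled $\delta^2/2$-supports of the finitely many operators $\Phi(v_i)\in\cstu(Y)$. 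None of this mechanism --- the controlled $\gamma$-support of single images, the partial-translation decomposition, or the colouring that defeats cancellation --- appears in your sketch.
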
 
 
\begin{proof} [Proof of Theorem \ref{Thm.PreservationMetric}]Suppose $(X,\cE)$ and $(Y,\cF)$ are u.l.f.\ coarse spaces and $\Phi\colon \cstu(X)\to\cstu(Y)$ is an isomorphism. Also suppose $X$ is metrizable and $\cstu(X)$ has no block-rank-one ghost projections. We need to prove that $(X,\cE)$ and $(Y,\cF)$ are coarsely equivalent. 

Since $X$ is metrizable and $\cstu(X)$ has no block-rank-one ghost projections, replacing $\delta$ by a smaller positive number if necessary, Lemmas~\ref{LemmaGeoCondForEquiApproxAreBoundedBelow} and \ref{LemmaUnifApprox} together imply that there is $g\colon Y\to X$ such that $\|\Phi^{-1}(\chi_y)\delta_{g(y)}\|>\delta$ for all $y\in Y$.\footnote{Alternatively, this follows from \cite[Corollary 3.3]{LiSpakulaZhang2020}.} Let us show $g$ is a coarse embedding.

By Lemma \ref{LemmaCor.Expanding}, for all $\eps>0$ there is $\eta>0$ such that 
\[
\|\Phi(\chi_x)(1_{\ell_2(Y)}-\chi_{Y_{x,\eta}})\|\leq \eps.
\]
Therefore, by Lemma \ref{LemmaTheMapIsCoarse.GenCoarseSp}, $g$ is coarse. We now show that $g$ is also expanding. 

\begin{claim}\label{claim:bigonsmall}
For all $\eps>0$ there is $\eta>0$ such that $\|\Phi^{-1}(\chi_y)\chi_{X_{y,\eta}}\|\geq 1-\eps$ for all $y\in Y$.
\end{claim}

\begin{proof}
As $X$ is a metrizable space, it follows from Lemma \ref{LemmaUnifApprox} that the indexed family $(\Phi^{-1}(\chi_y))_{y\in Y}$ is equi-approximable. Therefore, as $\cstu(X)$ has no ghost block-rank-one projection, Theorem \ref{ThmBlockRank1ProjIFFONLproj} gives that ONL holds for $(\Phi^{-1}(\chi_y))_{y\in Y}$. The remaining part of the proof now closely follows the proof of \cite[Lem\-ma 6.7]{WhiteWillett2017}. Alternatively, and using a terminology closer to that of the present paper, the proof can be completed by using \cite[Lemma 7.4]{BragaFarahVignati2018}. Indeed, although \cite[Lemma 7.4]{BragaFarahVignati2018} assumes the metric spaces have ONL, the only thing necessary for its argument to hold is that ONL holds for $(\Phi^{-1}(\chi_y))_{y\in Y}$. So, we are done. 
\end{proof}

As each $\Phi^{-1}(\chi_y)$ has rank 1,
\[
\|\Phi^{-1}(\chi_y)(1_{\ell_2(X)}-\chi_{X_{y,\eta}})\|^2=\|\Phi^{-1}(\chi_y)1_{\ell_2(X)}\|^2-\|\Phi^{-1}(\chi_y)\chi_{X_{y,\eta}}\|^2.
\]
By Claim~\ref{claim:bigonsmall}, the second term on the right-hand side can be made greater than $1-\varepsilon$ by choosing a small enough $\eta>0$. Since $\varepsilon>0$ was arbitrary, Lemma \ref{LemmaTheMapIsCoarse.GenCoarseSp} now implies that~$g$ is expanding.

The result now follows immediately from \cite[Theorem 1.2]{BaudierBragaFarahKhukhroVignatiWillett2021uRaRig}. Indeed, as $(Y,\cF)$ coarsely embeds into $(X,\cE)$ and $(X,\cE)$ is metrizable, $(Y,\cF)$ is also metrizable. Therefore, \cite[Theorem 1.2]{BaudierBragaFarahKhukhroVignatiWillett2021uRaRig}, $(X,\cE)$ and $(Y,\cF)$ are actually coarsely equivalent. 

It remains to prove that $g$ is a coarse equivalence. By Corollary \ref{CorRigPaper}, there are $\delta>0$ and $f\colon X\to Y$ such that $\|\Phi (\chi_x)\delta_{f(x)}\|> \delta$ for all $x\in X$. By Lemma \ref{LemmaMapWitRig}, $f$ is a coarse embedding. We need to verify that $g\circ f$ and $f\circ g$ are close to $\mathrm{Id}_X$ and $\mathrm{Id}_Y$, respectively. Let us first show that $g\circ f$ is close to $\mathrm{Id}_X$. As $X$ is metrizable, Lemma \ref{LemmaUnifApprox} gives $E\in \cE$ such that every $\Phi^{-1}(\chi_y)$ is $\delta^2$-$E$-approximable. Therefore, since 
\begin{align*}
\|\chi_{g(f(x))}\Phi^{-1}(\chi_{f(x)})\chi_x\|&\leq \|\Phi^{-1}(\chi_{f(x)})\chi_{g(f(x))}\|\|\Phi^{-1}(\chi_{f(x)})\chi_x\|\\
&=\|\Phi^{-1}(\chi_{f(x)}) \chi_{g(f(x))}\|\|\Phi(\chi_x)\chi_{f(x)}\|<\delta^2
\end{align*}
for all $x\in X$, we must have that $(x,g(f(x)))\in E$ for all $x\in X$. 

We now show $f\circ g$ is close to $\mathrm{Id}_Y$. This follows from the following general fact in coarse geometry: as $g$ is expanding and $g\circ f$ is close to $\mathrm{Id}_X$, then $f\circ g$ is close to $\mathrm{Id}_Y$. For completeness, we prove this simple fact. As $g\circ f$ is close to $\mathrm{Id}_X$, fix $E\in \cE$ such that $(x,g(f(x)))\in E$ for all $x\in X$. As $g$ is expanding, there is $F\in \cF$ such that $(y,z)\not\in F$ implies $(g(y),g(x))\not\in E$. In particular, if $(y,f(g(y)))\not\in F$ for some $y\in Y$, then $(g(y),g(f(g(y))))\not\in E$. By our choice of $E$, this cannot happen. Therefore, $(y,f(g(y)))\in F$ for all $y\in F$.
\end{proof}
 
\begin{proof}[Proof of Theorem \ref{Thm.CartanCosep}]Suppose that $X$ is a u.l.f.\ metric space, $\cstu(X)$ has no block-rank-one ghost projections, and $B\subseteq \cstu(X)$ is a Cartan masa (Definition~\ref{Def.Cartan}) isomorphic to $\ell_\infty(\N)$. We need to prove that $B$ is co-separable in $\cstu(X)$.

By \cite[Theorem 4.17]{WhiteWillett2017}, there is a u.l.f.\ coarse space $(Y,\cF)$ and an isomorphism $\Phi\colon \cstu(X)\to \cstu(Y)$ such that $\Phi[B]=\ell_\infty(Y)$. By Theorem \ref{Thm.PreservationMetric}, $Y$ is metrizable. Hence by \cite[Lemma 4.19]{WhiteWillett2017}, $\ell_\infty(Y)$ is co-separable in $\cstu(Y)$. This in turns implies that $B$ is co-separable in $\cstu(X)$. 
\end{proof} 

The next lemma is an easier version of part \eqref{LemmaGeoCondForEquiApproxAreBoundedBelow1} of Lemma \ref{LemmaGeoCondForEquiApproxAreBoundedBelow} and several small variations of it have already been were obtained in the literature, probably starting with \cite[Theorem 6.2]{BragaFarah2018Trans}. For this reason, we omit its proof.

\begin{lemma}\label{LemmaGGGG}
Let $(X,\cE)$ be a u.l.f.\ coarse space and let $(p_n)_n$ be a family of finite-rank projections in $\cstu(X)$ such that, for all infinite $M\subseteq \N$, $\SOTh\sum_{n\in M}p_n$ is a non-ghost projection in $\cstu(X)$. Then 
\[
\inf_{n\in\N}\sup_{x\in X}\|p_n\chi_x\|>0.\eqno\qed
\]
\end{lemma}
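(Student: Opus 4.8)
The plan is to prove the contrapositive, following the structure of the proof of part \eqref{LemmaGeoCondForEquiApproxAreBoundedBelow1} of Lemma \ref{LemmaGeoCondForEquiApproxAreBoundedBelow} but exploiting the stronger hypothesis that \emph{every} infinite subsum $\SOTh\sum_{n\in M}p_n$ is a non-ghost projection. So suppose for contradiction that $\inf_{n\in\N}\sup_{x\in X}\|p_n\chi_x\|=0$. Then I would extract a subsequence, reindexed as $(p_n)_n$, such that $\|p_n\chi_x\|<2^{-n}$ for all $x\in X$ and all $n$. As in the earlier lemma, for each finite $A\subseteq X$ we have $\|p_n\chi_A\|\leq\sum_{x\in A}\|p_n\chi_x\|\leq 2^{-n}|A|\to 0$, so each $p_n$ is becoming ``spread out.''

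First I would use finite-rankness of each $p_n$ together with the decay $\|p_n\chi_A\|\to 0$ on finite sets to pass to a further subsequence and find a disjoint sequence $(X_n)_n$ of finite subsets of $X$ with $\|p_n-\chi_{X_n}p_n\chi_{X_n}\|<2^{-n-1}$; this is a standard diagonalization/tail-compression argument identical to the one producing \eqref{Eq.smallpn} in Lemma \ref{LemmaGeoCondForEquiApproxAreBoundedBelow}. The key point here is that the hypothesis guarantees the subsum over this subsequence is still a non-ghost projection in $\cstu(X)$: letting $M$ be the (infinite) index set of the chosen subsequence, $\SOTh\sum_{n\in M}p_n$ is by assumption a non-ghost projection. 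Next I would compare this honest projection to the block operator $p'=\SOTh\sum_{n}\chi_{X_n}p_n\chi_{X_n}$. Since $\sum_n\|p_n-\chi_{X_n}p_n\chi_{X_n}\|<\infty$, the difference $\SOTh\sum_{n\in M}p_n-p'$ is norm-convergent with compact summands, hence compact, so $p'$ differs from a non-ghost projection by a compact operator and is therefore itself a non-ghost; in particular $p'$ is noncompact.

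The final step is to reach a contradiction by showing $p'$ \emph{is} a ghost, or alternatively that $\inf_n\sup_x\|p_n\chi_x\|=0$ forces $p'$ to be a ghost. Indeed, since each block $\chi_{X_n}p_n\chi_{X_n}$ satisfies $\|\chi_{X_n}p_n\chi_{X_n}\delta_x\|\leq\|p_n\chi_x\|<2^{-n}$ uniformly in $x$, and since the blocks live on disjoint finite sets $X_n$ with $d(X_n,X_m)\to\infty$ (one arranges this when extracting the subsequence, using u.l.f.\ to make the $X_n$ go to infinity), the matrix entries of $p'$ tend to $0$ off any finite set: given $\e>0$, pick $N$ with $2^{-N}<\e$, and outside the finite set $\bigcup_{n<N}X_n$ every entry of $p'$ comes from a block with index $\geq N$ and is thus bounded by $2^{-N}<\e$. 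Hence $p'$ is a ghost, contradicting the conclusion of the previous paragraph that $p'$ is a non-ghost. This contradiction proves $\inf_{n\in\N}\sup_{x\in X}\|p_n\chi_x\|>0$.

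The main obstacle I anticipate is purely bookkeeping rather than conceptual: one must perform the subsequence extraction so that simultaneously (a) the tail-compression estimate \eqref{Eq.smallpn}-type bound holds, (b) the resulting index set remains infinite so the hypothesis applies to $\SOTh\sum_{n\in M}p_n$, and (c) the supports $X_n$ are disjoint. The hypothesis here is genuinely weaker to invoke than in Lemma \ref{LemmaGeoCondForEquiApproxAreBoundedBelow}—we do not need the full ghost/block-projection machinery, only that a single infinite subsum is a non-ghost projection—which is exactly why, as the text notes, this is an ``easier version'' and its proof can be safely omitted.
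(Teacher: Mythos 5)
Your proof is correct and is essentially the argument the paper has in mind: the paper omits the proof of Lemma~\ref{LemmaGGGG} precisely because it is the tail-compression/block-sum/ghost-comparison argument from Lemma~\ref{LemmaGeoCondForEquiApproxAreBoundedBelow}\eqref{LemmaGeoCondForEquiApproxAreBoundedBelow1}, which is what you reproduce, and your version is legitimately simpler since the hypothesis that the subsums $\SOTh\sum_{n\in M}p_n$ are non-ghost lets you skip the functional-calculus step that repairs the blocks $\chi_{X_n}p_n\chi_{X_n}$ into genuine projections. Two incidental points in your write-up are harmless but worth noting: the parenthetical requirement $d(X_n,X_m)\to\infty$ is a metric notion unavailable in a general coarse space and is never used (your ghost estimate needs only disjointness of the finite sets $X_n$), and your subsequence extraction implicitly assumes each $p_n\neq 0$, an assumption the statement inherits from Lemma~\ref{LemmaGeoCondForEquiApproxAreBoundedBelow} and which holds in all the paper's applications.
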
 
 
\begin{proof}[Proof of Theorem \ref{Thm.GhostsAndCosep}] Suppose $(X,d)$ is a u.l.f.\ metric space and $B\subseteq \cstu(X)$ is a Cartan masa isomorphic to $\ell_\infty(\N)$. 

\eqref{Thm.GhostsAndCosep.Item1}$\Rightarrow$\eqref{Thm.GhostsAndCosep.Item2}: If all ghost operators in $B$ are compact, then all ghost projections in $B$ are compact. 
 
\eqref{Thm.GhostsAndCosep.Item2}$\Rightarrow$\eqref{Thm.GhostsAndCosep.Item3}: Suppose that all ghost projections in $B$ are compact. A proof that $B$ is co-separable in $\cstu(X)$ is analogous to that of Theorem \ref{Thm.PreservationMetric}. Precisely, by \cite[Theorem 4.17]{WhiteWillett2017}, there are a u.l.f.\ coarse space $(Y,\cF)$ and an isomorphism $\Phi\colon \cstu(X)\to \cstu(Y)$ such that $\Phi(B)=\ell_\infty(Y)$. Then, by Lemma \ref{LemmaGGGG}, there are $\delta>0$ and a map $g\colon Y\to X$ such that $\|\Phi^{-1}(\chi_y)\delta_{g(y)}\|>\delta$ for all $x\in X$ and $y\in Y$. Proceeding as in the proof of Theorem \ref{Thm.PreservationMetric}, we conclude that $g$ is a coarse embedding. In particular, $(Y,\cF)$ is metrizable. By \cite[Lemma 4.19]{WhiteWillett2017}, this implies that $\ell_\infty(Y)$ is co-separable in $\cstu(Y)$. Hence, $B$ is co-separable in $\cstu(X)$. 
 
\eqref{Thm.GhostsAndCosep.Item3}$\Rightarrow$\eqref{Thm.GhostsAndCosep.Item1}: Suppose $B$ is co-separable in $\cstu(X)$. By \cite[Theorem B]{WhiteWillett2017} there are a u.l.f.\ metric space $(Y,\partial)$ and an isomorphism $\Phi\colon \cstu(X)\to \cstu(Y)$ such that $\Phi(B)=\ell_\infty(Y)$. By \cite[Theorem 1.11]{BaudierBragaFarahKhukhroVignatiWillett2021uRaRig}, any isomorphism between uniform Roe algebras of metric spaces must send ghosts to ghosts. As all ghosts in $\ell_\infty(Y)$ are compact, the same must happen in $B$. \end{proof}

\section{Applications II: Embeddings}\label{SectionEmb}

The study of embeddings between uniform Roe algebras was initiated in \cite{BragaFarahVignati2019Comm}. It is known that the existence of an embedding $\cstu(X)\hookrightarrow \cstu(Y)$ does not necessarily imply that $X$ coarsely embeds into $Y$. Indeed, an embedding $\cstu(X)\hookrightarrow \cstu(Y)$ exists if there is an injective coarse map $X\to Y$. Hence, letting $f\colon \Z\to \N$ be such that $f(0)=0$, and $f(n)=2n$ and $f(-n)=2n-1$ for all $n\in\N$, we conclude that $\cstu(\Z)$ embeds into $\cstu(\N)$ but $\Z$ clearly does not coarsely embed into $\N$ (cf. \cite[\S2.4]{BragaFarahVignati2019Comm}).

On the other hand, the existence of an embedding $\cstu(X)\hookrightarrow \cstu(Y)$ satisfying some extra conditions (e.g., rank preservation, compact preservation, hereditary range, etc) is often enough to give us some information on how to map $X$ into $Y$. In this section, we study how the new rigidity techniques introduced in \cite{BaudierBragaFarahKhukhroVignatiWillett2021uRaRig} apply to embeddings. More precisely, we investigate when the analog of Corollary \ref{CorRigPaper} holds for embeddings in place of isomorphisms. 

We start with a simple, but hopefully somewhat enlightening, observation. 

\begin{proposition}\label{ThmEmbCanonicalMASAToMASA}
Let $(X,\cE)$ and $(Y,\cF)$ be u.l.f.\ coarse spaces. The following are equivalent for an embedding $\Phi\colon \cstu(X)\to \cstu(Y)$:

\begin{enumerate}
\item\label{Item1.ThmEmbCanonicalMASAToMASA} $\Phi$ is unital, rank-preserving, and strongly continuous.
\item \label{Item1+.ThmEmbCanonicalMASAToMASA} $\Phi$ is implemented by a unitary $u\colon \ell_2(X)\to \ell_2(Y)$. 
\item \label{Item2-.ThmEmbCanonicalMASAToMASA} $\Phi(\ell_\infty(X))$ is a masa of $\cB(\ell_2(Y))$.
\item \label{Item2.ThmEmbCanonicalMASAToMASA} $\Phi(\ell_\infty(X))$ is a masa of $\cstu(Y)$.
\end{enumerate}
\end{proposition}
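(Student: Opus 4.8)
The plan is to establish the cycle $(1)\Rightarrow(2)\Rightarrow(3)\Rightarrow(4)\Rightarrow(1)$, where the two formal steps are $(2)\Rightarrow(3)$ and $(3)\Rightarrow(4)$ and the two substantive ones are $(1)\Rightarrow(2)$ and $(4)\Rightarrow(1)$. For $(2)\Rightarrow(3)$: if $\Phi=\Ad u$ for a unitary $u\colon\ell_2(X)\to\ell_2(Y)$, then $\Phi(\ell_\infty(X))=u\,\ell_\infty(X)\,u^*$, and since $\ell_\infty(X)$ is a masa in $\cB(\ell_2(X))$ and conjugation by $u$ is a $*$-isomorphism $\cB(\ell_2(X))\to\cB(\ell_2(Y))$ sending masas to masas, this is a masa in $\cB(\ell_2(Y))$. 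For $(3)\Rightarrow(4)$: as $\Phi(\ell_\infty(X))\subseteq\cstu(Y)$, any abelian subalgebra of $\cstu(Y)$ containing it is an abelian subalgebra of $\cB(\ell_2(Y))$ containing the masa $\Phi(\ell_\infty(X))$, hence equals it, so $\Phi(\ell_\infty(X))$ is already maximal abelian in $\cstu(Y)$.

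For $(1)\Rightarrow(2)$ I would first build the unitary. Writing $q_x=\Phi(\chi_x)$, rank-preservation makes each $q_x$ a rank-one projection, the $q_x$ are pairwise orthogonal because $\Phi$ is multiplicative, and unitality together with strong continuity gives $\SOTh\sum_x q_x=\Phi(\SOTh\sum_x\chi_x)=\Phi(1)=1$. Choosing a unit vector $\eta_x$ with $\ran q_x=\C\eta_x$ therefore produces an orthonormal basis $(\eta_x)_{x\in X}$ of $\ell_2(Y)$, and $u\delta_x=\eta_x$ defines a unitary with $u\chi_x u^*=q_x$. It remains to check $\Phi=\Ad u$. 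I would set $\Psi=\Ad u^*\circ\Phi$, a unital strongly continuous $*$-homomorphism fixing each $\chi_x$ and hence all of $\ell_\infty(X)$; for $x\sim z$ (meaning $\{(x,z)\}\in\cE$) the matrix unit $e_{xz}\in\cstu(X)$ satisfies $\Psi(e_{xz})^*\Psi(e_{xz})=\chi_z$ and $\Psi(e_{xz})\Psi(e_{xz})^*=\chi_x$, forcing $\Psi(e_{xz})=\omega(x,z)\,e_{xz}$ for a unimodular $\omega(x,z)$, with $\omega$ a cocycle by multiplicativity. The subtle part here is the phases: $\sim$ is an equivalence relation, every controlled-propagation operator is block diagonal along its classes, and choosing a base point in each class trivialises the cocycle, producing a unitary $d\in\ell_\infty(X)$ with $d_x\bar d_z=\omega(x,z)$; then $\Psi$ and $\Ad d$ have the same matrix entries on finite-propagation operators, so $\Psi=\Ad d$ by continuity and $\Phi=\Ad(ud)$ with $ud$ unitary.

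For $(4)\Rightarrow(1)$, put $B=\Phi(\ell_\infty(X))$. First I would show $\Phi$ is unital: if $P=\Phi(1)\neq 1$ then $1-P$ is a nonzero self-adjoint element of $\cstu(Y)$ that commutes with $B$ (since $b=PbP$ for every $b\in B$) and is not in $B$ (whose unit is $P$), so $B+\C(1-P)$ is a strictly larger abelian $*$-subalgebra, contradicting maximality. Next, each $q_x=\Phi(\chi_x)$ is a minimal projection of $B$, and since $q_xBq_x=\C q_x$ is a masa in $q_x\cstu(Y)q_x$ (a corner of a masa is a masa) consisting of scalars, $q_x\cstu(Y)q_x=\C q_x$; in particular $q_x\ell_\infty(Y)q_x=\C q_x$. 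Comparing $\langle f\xi,\xi\rangle$ over unit vectors $\xi\in\ran q_x$ for indicator functions $f\in\ell_\infty(Y)$ then shows that two orthonormal vectors of $\ran q_x$ would have disjoint supports and equal modulus functions, which is impossible; hence each $q_x$ has rank one, and $\Phi$ is unital and rank-preserving.

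The remaining property in $(1)$, strong continuity, is equivalent to $Q:=\SOTh\sum_x q_x$ being $1$, and this is the main obstacle. Since $Q$ commutes with $B$ and $B''\cap\cstu(Y)=B$, it would suffice to prove $Q\in\cstu(Y)$, for then $Q\in B$ and comparison with the atoms forces $Q=1$. Equivalently, assuming $P=1-Q\neq 0$, I would seek a contradiction with maximality by exhibiting a nonzero element of $\cstu(Y)$ that commutes with $B$ and is supported under $P$ (hence outside $B$, every element of which vanishes on $\ran P$). The delicacy is that the compression of $B$ to $\ran P$ factors through $\ell_\infty(X)/c_0(X)$ and can be nonatomic, so no compact operator is automatically available and the finite-propagation structure of $\cstu(Y)$ must be exploited to manufacture the commuting operator on $\ran P$. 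Once $Q=1$ is secured, $(\eta_x)$ is an orthonormal basis, $\Phi$ is strongly continuous, and $(1)$ holds, closing the cycle.
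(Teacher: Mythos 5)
Your formal implications and the first two substantive pieces of $(4)\Rightarrow(1)$ are fine: the unitality argument is the same as the paper's, and your corner argument (that $q_x\cstu(Y)q_x=\C q_x$ forces $\mathrm{rank}(q_x)=1$, using $\ell_\infty(Y)\subseteq\cstu(Y)$) is a valid alternative to the paper's Lemma~\ref{L.WW}; your cocycle construction for $(1)\Rightarrow(2)$ is likewise a legitimate self-contained version of the argument the paper imports from the literature. But the proof is not complete, and the gap is at the decisive point. In $(4)\Rightarrow(1)$ you reduce strong continuity to showing $Q=\SOTh\sum_x q_x=1$, explain why this is delicate, and then write that the finite-propagation structure ``must be exploited to manufacture the commuting operator'' and continue with ``once $Q=1$ is secured\dots''. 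That step is announced, never carried out. It is not a presentational omission: unitality, rank-one atoms and rank preservation can all hold for an embedding that is \emph{not} strongly continuous --- the paper's Proposition~\ref{PropMethodFailsForEmbHer} constructs exactly such an embedding whenever $\cstu(Y)$ contains a block-rank-one ghost projection --- so the masa hypothesis must do its work precisely in the step you left open, and without it the cycle $(4)\Rightarrow(1)$ is broken. For comparison, the paper closes this step by arguing that, since $\Phi(\ell_\infty(X))$ is a masa of $\cstu(Y)$ whose minimal projections have rank one, the family $(\Phi(\chi_x))_{x\in X}$ is a \emph{maximal} family of rank-one projections, so that $\SOTh\sum_x\Phi(\chi_x)=1_{\ell_2(Y)}=\Phi(1_{\ell_2(X)})$, and then invoking the known criterion (Corollary 5.2 of Braga's coarse-quotient paper, resting on Theorem 4.3 of Braga--Farah--Vignati) that an embedding between uniform Roe algebras is strongly continuous if and only if $\Phi(1)$ equals $\SOTh\sum_x\Phi(\chi_x)$. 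So your reduction of strong continuity to $Q=1$ agrees with the paper's strategy; what is missing is exactly the derivation of $Q=1$ from maximality of the masa.

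Two smaller inaccuracies. Your parenthetical ``(hence outside $B$, every element of which vanishes on $\ran P$)'' is false: $\Phi$ is unital, so $\Phi(1)=1$ acts as the identity on $\ran P$, and in general $P\Phi(b)P\neq 0$. The conclusion you want survives for a different reason: if $c=PcP=\Phi(b)\in B$, then for every $x$ one has $\Phi(\chi_x b)=\Phi(\chi_x)PcP=0$, so $b=0$ by injectivity. Also, you pass from ``each $q_x$ has rank one'' directly to ``$\Phi$ is rank-preserving''; this needs the (easy but necessary) intermediate argument the paper gives: each $\Phi(e_{xz})$ then has rank one, hence $\Phi$ is rank-preserving on $\cB(\ell_2(F))$ for finite controlled $F\times F$, and then on all of $\cstu(X)$ by approximating finite-rank operators in norm by operators of the same rank supported on such blocks.
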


We do not know whether the conditions in Proposition~\ref{ThmEmbCanonicalMASAToMASA} suffice to guarantee that $\inf_{x\in X}\sup_{y\in Y}\|\Phi(\chi_x)\delta_{y}\|>0$. Its proof of uses the following minor strengthening of \cite[Lemma 2.3]{WhiteWillett2017}, where the same conclusion was obtained under the stronger assumption that $A$ includes all compact operators.

\begin{lemma}\label{L.WW}
Suppose that $A$ is a concrete subalgebra of $\cB(H)$ for some Hilbert space that includes a masa of $\cB(H)$ isomorphic to $\ell_\infty(Z)$ for some set $Z$. If~$B$ is a masa in $A$, then every minimal projection in $B$ has rank 1. 
\end{lemma}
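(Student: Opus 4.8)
The plan is to show that for every minimal projection $p\in B$ one has $pAp=\C p$, and then to exploit the atomic masa sitting inside $A$ to conclude that $p$ has rank one. Recall that $p$ being a minimal projection of $B$ means $pBp=\C p$; since $B$ is abelian, this gives $bp=pb=\lambda(b)p$ for every $b\in B$, where $\lambda\colon B\to\C$ is the associated character. This relation is the engine of the whole argument.

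First I would establish $pAp=\C p$ using only that $B$ is a masa. Let $a=pap$ be a self-adjoint element of the corner $pAp\subseteq A$. For every $b\in B$ the identities $pb=bp=\lambda(b)p$ give
\[
ab=pap\,b=pa(pb)=\lambda(b)\,pap=\lambda(b)a
\quad\text{and}\quad
ba=b\,pap=(bp)ap=\lambda(b)\,pap=\lambda(b)a,
\]
so $a$ commutes with every element of $B$. Being a self-adjoint element of $A$ commuting with $B$, the element $a$ generates together with $B$ an abelian \cstar-subalgebra of $A$ containing $B$, and maximality of $B$ forces $a\in B$. But then $a=pap=pa\in pBp=\C p$. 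Since $pAp$ is the linear span of its self-adjoint elements, this yields $pAp=\C p$. This step is a mild reformulation of the corner computation in \cite[Lemma~2.3]{WhiteWillett2017}, now carried out without reference to compact operators.

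The genuinely new ingredient replaces the hypothesis $\cK(H)\subseteq A$ by the masa $M\subseteq A$ with $M\cong\ell_\infty(Z)$. Being isomorphic to $\ell_\infty(Z)$, the algebra $M$ is atomic; its minimal projections $(q_z)_{z\in Z}$ are rank one in $\cB(H)$ (a minimal projection $q$ of a masa acts as a scalar on $qH$, so any proper rank-one subprojection of $q$ would again commute with $M$ and hence lie in $M$, contradicting minimality unless $\dim qH=1$), and their supremum is $1_H$, so the associated unit eigenvectors $(e_z)_z$ with $q_z=e_ze_z^*$ form an orthonormal basis of $H$. Since each $q_z\in A$, we obtain
\[
p q_z p=(pe_z)(pe_z)^*\in pAp=\C p,
\]
say $pq_zp=c_zp$. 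The left-hand side has rank at most one, while for $c_z\neq0$ the right-hand side has rank $\rank(p)$. Hence if $\rank(p)\geq2$ we are forced to have $c_z=0$, i.e. $pe_z=0$, for every $z$; as $(e_z)_z$ is an orthonormal basis this gives $p=0$, contradicting the fact that minimal projections are nonzero. Therefore $\rank(p)=1$.

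The conceptual heart is the first step, deducing $pAp=\C p$ purely from maximal abelianness, after which the second step is only a rank count. The point to handle with care is the structure of $M$: one must verify that an abstract isomorphism $M\cong\ell_\infty(Z)$ really furnishes mutually orthogonal rank-one projections $q_z$ whose supremum (equivalently, $\SOTh$-sum) is the identity, since it is precisely this abundance of rank-one projections that converts the corner identity $pAp=\C p$ into the desired rank-one conclusion.
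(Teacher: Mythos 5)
Your proof is correct and takes essentially the same route as the paper's: both arguments rest on the corner identity $pAp=\C p$ (which the paper inherits from the cited White--Willett lemma and you prove directly from maximality) combined with the fact that $p$ cannot annihilate every rank-one atom of the $\ell_\infty(Z)$ masa, so that some $pq_zp$ is a nonzero scalar multiple of $p$ of rank at most one. The only cosmetic difference is that the paper locates a non-annihilated atom via SOT-continuity of $a\mapsto pap$ applied to the net of finite-rank projections in $\ell_\infty(Z)$, whereas you reach the same point by noting that the atoms' eigenvectors form an orthonormal basis of $H$.
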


\begin{proof}Fix a masa $B$ in $A$ and a minimal projection $p$ in $B$. 
The map $a\mapsto pap$ is a SOT-continuous, hence $\cP=\{pqp\mid q\in \ell_\infty(Z)$ has finite rank$\}$ is a directed net of positive contractions included in $A$ with $p$ as its supremum. By SOT-continuity we can fix a rank 1 projection $q\in \ell_\infty(Z)$ such that $pqp\neq 0$. By the minimality of $p$, $pqp$ is a scalar multiple of $p$ and therefore $p$ has rank 1. 
\end{proof}

\begin{proof}[Proof of Proposition~\ref{ThmEmbCanonicalMASAToMASA}]
\eqref{Item1.ThmEmbCanonicalMASAToMASA} implies \eqref{Item1+.ThmEmbCanonicalMASAToMASA}: 
This is similar to the proof of \cite[Lemma 3.1]{BragaFarah2018Trans}. Suppose that $\Phi$ is as in \eqref{Item1.ThmEmbCanonicalMASAToMASA}. Write $X$ as a discrete sum of its connected components, $X=\bigsqcup_i X_i$. It suffices to prove that the restriction of $\Phi$ to each $X_i$ is implemented by a unitary, so we may assume that $X$ is connected.  By assumptiuon $\Phi$ sends rank 1 projections to rank 1 projections; as moreover it is unital and SOT-continuous, its range contains a maximal orthogonal set of rank one projections.  As $\Phi$ is a $*$-homomorphism, it preserves Murray-von Neumann equivalence of projections, and so the range of $\Phi$ includes the ideal of compact operator on $\ell_2(Y)$.  The conclusion follows as in \cite[Lemma 3.1]{BragaFarah2018Trans}.

Clearly \eqref{Item1+.ThmEmbCanonicalMASAToMASA} implies \eqref{Item2-.ThmEmbCanonicalMASAToMASA} and 
\eqref{Item2-.ThmEmbCanonicalMASAToMASA} implies \eqref{Item2.ThmEmbCanonicalMASAToMASA}. 

Suppose that $\Phi$ satisfies \eqref{Item2.ThmEmbCanonicalMASAToMASA}. In particular, $\Phi(1_{\ell_2(X)})=1_{\ell_2(Y)}$, so $\Phi$ is unital. By Lemma~\ref{L.WW} applied to $A=\cstu(Y)$, every minimal projection in $\Phi[\ell_\infty(X)]$ has rank 1. 
 Therefore, each $\Phi(e_{xz})$ also has rank-one and it follows that 
\[
\Phi\restriction \bigcup_{F\subseteq X,|F|<\infty}\cB(\ell_2(F))
\]
is rank-preserving. As each finite-rank operator operator can be norm approximated by operators in $ \bigcup_{F\subseteq X,|F|<\infty}\cB(\ell_2(F))$ with the same rank, it easily follows that $\Phi$ is rank-preserving.

It remains to show that $\Phi$ is strongly continuous. For that, notice that as $\Phi\colon \ell_\infty(X)\to \Phi(\ell_\infty(X))$ is an isomorphism, as each $\Phi(\chi_x)$ has rank-one, and as $\Phi(\ell_\infty(X))$ is a masa in $\cstu(Y)$, we have that that $(\Phi(\chi_x))_{x\in X}$ is a maximal family of rank-one projections. Therefore, 
\[
\SOTh\sum_{x\in X}\Phi(\chi_x)=1_{\ell_2(Y)}=\Phi(1_{\ell_2(X)}).
\]
Since $\Phi$ is strongly continuous if and only if $\Phi(1_{\ell_2(X)})=\SOTh\sum_{x\in X}\Phi(\chi_x)$ (see, for instance, \cite[Corollary 5.2]{Braga2020CoarseQuotient}\footnote{\cite[Corollary 5.2]{Braga2020CoarseQuotient} is based on \cite[Theorem 4.3]{BragaFarahVignati2019Comm}. Even though metrisability of $X$ and $Y$ is stated as a hypothesis, it is not used in the proof of \cite[Theorem~4.3]{BragaFarahVignati2019Comm}, which holds for u.l.f. coarse spaces in full generality. See also \cite[Theorem~6.1]{BaudierBragaFarahVignatiWillett2022vNA}.}, this shows that $\Phi$ satisfies \eqref{Item1.ThmEmbCanonicalMASAToMASA}. 
\end{proof}

We continue with a positive result.
 
\begin{theorem}\label{ThmEmb1}
Let $(X,\cE)$ and $(Y,\cF)$ be u.l.f.\ coarse spaces. If $(X,\cE)$ is metrizable, $\Phi\colon \cstu(X)\to \cstu(Y)$ is an embedding, and at least one of the following conditions applies
\begin{enumerate}
\item\label{CorRigPaperEmb1} $(Y,\cF)$ is metrizable and $\Phi(\ell_\infty(X))$ is a co-separable Cartan masa of~$\cstu(Y)$, or
\item \label{CorRigPaperEmb2} $\ell_\infty(Y)\subseteq \Phi(\cstu(X))$, 
\end{enumerate}
then there is a coarse and uniformly finite-to-one $f\colon X\to Y$ such that 
\[
\inf_{x\in X}\|\Phi(\chi_x)\delta_{f(x)}\|>0.
\]
\end{theorem}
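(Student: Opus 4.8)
The plan is to handle both cases through a single two-step scheme. The conclusion asks for a coarse, uniformly finite-to-one $f\colon X\to Y$ satisfying \eqref{EqfWitRig}, and since $(X,\cE)$ is metrizable and $\Phi$ is an embedding, once I have \emph{any} $f$ with $\inf_{x}\|\Phi(\chi_x)\delta_{f(x)}\|>0$ the remaining regularity comes for free: Lemma~\ref{LemmaMapWitRig}\eqref{LemmaMapWitRig1} applies in case~\eqref{CorRigPaperEmb1} (using metrizability of $Y$) and Lemma~\ref{LemmaMapWitRig}\eqref{LemmaMapWitRig2} applies in case~\eqref{CorRigPaperEmb2} (using $c_0(Y)\subseteq\ell_\infty(Y)\subseteq\Phi(\cstu(X))$), each upgrading $f$ to coarse and uniformly finite-to-one. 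So the entire content is the \emph{existence} of such an $f$, which amounts to showing
\[
\inf_{x\in X}\sup_{y\in Y}\|\Phi(\chi_x)\delta_y\|>0,
\]
after which I pick $f(x)$ realizing, say, half of this infimum. I first record that $Y$ is countable in both cases (in \eqref{CorRigPaperEmb1} because $Y$ is metrizable and u.l.f.; in \eqref{CorRigPaperEmb2} because $\ell_2(X)$ is separable while the $\Phi^{-1}(\chi_y)$ are nonzero and pairwise orthogonal), so after reindexing the countably-indexed Lemmas~\ref{LemmaUnifApprox}, \ref{LemmaCorRigPaper}, and~\ref{LemmaGGGG} will apply. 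I may assume $X$ is infinite, the finite case being trivial.

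For case~\eqref{CorRigPaperEmb1}, a Cartan masa is in particular a masa, so $\Phi(\ell_\infty(X))$ satisfies condition~\eqref{Item2.ThmEmbCanonicalMASAToMASA} of Proposition~\ref{ThmEmbCanonicalMASAToMASA}; hence $\Phi$ is unital, rank-preserving, and strongly continuous. Then each $\Phi(\chi_x)$ is a rank-one projection, and for every $M\subseteq X$ the sum $\SOTh\sum_{x\in M}\Phi(\chi_x)=\Phi(\chi_M)$ is a projection of rank $|M|$ lying in $B\coloneqq\Phi(\ell_\infty(X))\subseteq\cstu(Y)$. Since $B\cong\ell_\infty(\N)$ is a co-separable Cartan masa of the metrizable space $Y$, Theorem~\ref{Thm.GhostsAndCosep} forces every ghost in $B$ to be compact, so for infinite $M$ the infinite-rank projection $\Phi(\chi_M)$ is not a ghost. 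Lemma~\ref{LemmaGGGG}, applied to the family $(\Phi(\chi_x))_{x\in X}$ of finite-rank projections in $\cstu(Y)$, then yields $\inf_{x}\sup_{y}\|\Phi(\chi_x)\delta_y\|>0$.

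For case~\eqref{CorRigPaperEmb2}, since $\chi_y\in\ell_\infty(Y)\subseteq\Phi(\cstu(X))$ I set $e_y\coloneqq\Phi^{-1}(\chi_y)\in\cstu(X)$, obtaining pairwise orthogonal projections. Because $\Phi$ is an isometric $*$-homomorphism, I get the key identity
\[
\|\Phi(\chi_x)\delta_y\|=\|\Phi(\chi_x)\chi_y\|=\|\Phi(\chi_x)\Phi(e_y)\|=\|\chi_x e_y\|=\|e_y\delta_x\|,
\]
so it suffices to prove $\inf_{x}\sup_{y}\|e_y\delta_x\|>0$, which I plan to extract from Lemma~\ref{LemmaCorRigPaper} applied to $(e_y)_y$. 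I verify its two hypotheses: first, for each $M\subseteq Y$ the projection $\Phi^{-1}(\chi_M)\in\cstu(X)$ dominates $e_y$ for $y\in M$ and annihilates $e_y$ for $y\notin M$, and a short orthogonality argument identifies it with $\SOTh\sum_{y\in M}e_y$; hence all these partial SOT-sums lie in $\cstu(X)$, and Lemma~\ref{LemmaUnifApprox} (here $X$ is metrizable) gives equi-approximability of $(\sum_{y\in A}e_y)_{A}$. Second, $\SOTh\sum_{y\in Y}e_y=1_{\ell_2(X)}$. Granting both, Lemma~\ref{LemmaCorRigPaper} delivers $\inf_{x}\sup_{y}\|e_y\delta_x\|>0$.

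The step I expect to require the most care is the identity $\SOTh\sum_{y}e_y=1_{\ell_2(X)}$: because $\Phi^{-1}\restriction\ell_\infty(Y)$ need not be strongly continuous, this does not follow formally from $\SOTh\sum_y\chi_y=1_{\ell_2(Y)}$ (a state supported on $\beta Y\setminus Y$ shows the abstract implication can genuinely fail). My plan is to exploit that $X$ is metrizable, so its coarse structure is connected and $\cK(\ell_2(X))\subseteq\cstu(X)$: if a unit vector $\xi$ had $e_y\xi=0$ for all $y$, then the rank-one projection $\theta=|\xi\rangle\langle\xi|\in\cstu(X)$ would satisfy $\theta e_y=0$, whence $\Phi(\theta)\chi_y=\Phi(\theta e_y)=0$ for every $y$ and so $\Phi(\theta)=0$, contradicting injectivity of $\Phi$. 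This makes the ranges of the $e_y$ total, giving $\SOTh\sum_y e_y=1$, and the same orthogonality bookkeeping then pins each partial SOT-sum down as $\Phi^{-1}(\chi_M)$, completing case~\eqref{CorRigPaperEmb2}.
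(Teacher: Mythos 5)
Your proof is correct and takes essentially the same route as the paper's: case \eqref{CorRigPaperEmb1} via Theorem \ref{Thm.GhostsAndCosep} and Lemma \ref{LemmaGGGG}, case \eqref{CorRigPaperEmb2} via identifying $\SOTh\sum_{y\in M}\Phi^{-1}(\chi_y)$ with $\Phi^{-1}(\chi_M)$ by an injectivity-plus-compacts argument (the paper runs the same argument, applied directly to a finite-rank projection below $\Phi^{-1}(\chi_M)$ for each $M$) followed by Lemmas \ref{LemmaUnifApprox} and \ref{LemmaCorRigPaper}, with Lemma \ref{LemmaMapWitRig} upgrading $f$ to coarse and uniformly finite-to-one in both cases. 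The only real difference is expository: you invoke Proposition \ref{ThmEmbCanonicalMASAToMASA} to justify that $\Phi$ is rank-preserving and strongly continuous in case \eqref{CorRigPaperEmb1}, a verification of the hypotheses of Lemma \ref{LemmaGGGG} that the paper leaves implicit.
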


\begin{proof}
Suppose $(Y,\cE)$ is metrizable and that $\Phi(\ell_\infty(X))$ is a co-separable Cartan masa of $\cstu(Y)$. By Theorem \ref{Thm.GhostsAndCosep}, all ghosts in $\Phi(\ell_\infty(X))$ are compact. Therefore, Lemma \ref{LemmaGGGG} gives a map $f\colon X\to Y$ such that 
\[
\inf_{x\in X}\|\Phi(\chi_x)\delta_{f(x)}\|>0.
\]
 As $(Y,\cF)$ is metrizable, Lemma \ref{LemmaMapWitRig} implies $f$ is coarse and uniformly finite-to-one.

Suppose now that $\ell_\infty(Y)\subseteq \Phi(\cstu(X))$. Then, for each $M\subseteq Y$, we have 
\[
\Phi^{-1}\Big(\SOTh\sum_{y\in M}\chi_y\Big)=\SOTh\sum_{y\in M}\Phi^{-1}(\chi_y).
\]
Indeed, fix $M\subseteq Y$ and let $p$ be a finite rank projection below the projection $\Phi^{-1} (\SOTh\sum_{y\in M}\chi_y )$ and orthogonal to all $(\Phi^{-1}(\chi_y))_{y\in M}$. As $X$ is metrizable, $\cstu(X)$ contains the compacts; so, $p\in \cstu(X)$. Therefore, $\Phi(p)$ is a projection below $\SOTh\sum_{y\in M}\chi_y$ which is orthogonal to all $(\chi_y)_{y\in M}$, i.e., $\Phi(p)=0$. As $\Phi$ is injective, $p=0$.
 
As $\ell_\infty(Y)\subseteq \Phi(\cstu(X))$, the previous paragraph implies that the projections $(\Phi^{-1}(\chi_y))_{y\in Y}$ satisfy
\begin{enumerate}
\item $\SOTh\sum_{y\in M}\Phi^{-1}(\chi_y)\in \cstu(X)$ for all $M\subseteq Y$, and
\item $\SOTh\sum_{y\in Y}\Phi^{-1}(\chi_y)=1_{\ell_2(X)}$. 
\end{enumerate}
Therefore, by Corollary \ref{LemmaCorRigPaper}, there are $\delta>0$ and $f\colon X\to Y$ such that $\|\Phi^{-1}(\chi_{f(x)})\delta_{x}\|>\delta$ for all $x\in X$. Hence, 
\[
\|\Phi(\chi_x)\delta_{f(x)}\|=\|\Phi^{-1}(\chi_{f(x)})\delta_{x}\|>\delta
\]
for all $x\in X$. Again, Lemma \ref{LemmaMapWitRig} implies that $f$ is coarse and uniformly finite-to-one.
\end{proof}

\begin{remark}
Notice that the existence of a uniformly finite-to-one coarse map $X\to Y$ between u.l.f.\ coarse spaces is enough so that the geometry of $X$ is highly controlled by the one of $Y$. For instance, if such map exists, (1) the asymptotic dimension of $X$ is bounded by the one of $Y$, (2) if $Y$ has property A, so does $X$, and (3) if $Y$ has finite decomposition complexity, then so does $X$ (see \cite[Corollary 1.3]{BragaFarahVignati2019Comm}). 
\end{remark}
 
The reader familiar with the theory of embeddings of uniform Roe algebras may have noticed that the conditions in Theorem \ref{ThmEmbCanonicalMASAToMASA} radically differ from the ones previously considered in this context. Precisely, the results in the literature (see \cite[Theorems 1.4 and 5.4]{BragaFarahVignati2019Comm}) usually require the target space $Y$ to satisfy some geometric condition and the embedding $\Phi\colon \cstu(X)\to\cstu(Y)$ to either (1) be rank-preserving or (2) have its image to be a hereditary subalgebra of $\cstu(Y)$. 

It is then natural to wonder if the techniques used to prove Corollary \ref{CorRigPaper} can be applied to obtain a version of this corollary for embeddings. The following shows that this is not possible. 

\begin{proposition}\label{PropMethodFailsForEmbHer}
Let $X=\bigsqcup_n{\{x_n\}}$ be the coarse disjoint union of singletons.\footnote{For instance, the reader can have $X=\{n^2\mid n\in\N\}$ with the metric inherited from $\bbN$ in mind.} Given a u.l.f.\ metric space $Y$, the following are equivalent.
\begin{enumerate}
\item\label{Item1.PropMethodFailsForEmbHer} There is a block-rank-one ghost projection in $ \cstu(Y)$.
\item \label{Item2.PropMethodFailsForEmbHer} There is an embedding $\Phi\colon \cstu(X)\to\cstu(Y)$ onto a hereditary subalgebra of $\cstu(Y)$ such that $\inf_{x\in X}\sup_{y\in Y}\|\Phi(\chi_x)\delta_{y}\|=0$.
\item \label{Item3.PropMethodFailsForEmbHer} There is a unital rank-preserving embedding $\Phi\colon \cstu(X)\to\cstu(Y)$ such that $\inf_{x\in X}\sup_{y\in Y}\|\Phi(\chi_x)\delta_{y}\|=0$.
\end{enumerate}
\end{proposition}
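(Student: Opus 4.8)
The plan is to prove the three equivalences by exhibiting explicit constructions, exploiting the fact that $X=\bigsqcup_n\{x_n\}$ has the trivial uniform Roe algebra structure: since the singletons are coarsely disjoint, $\cstu(X)=\ell_\infty(X)+\cK(\ell_2(X))$, so an embedding $\Phi$ is essentially determined by where it sends the canonical rank-one projections $\chi_{x_n}$. First I would establish \eqref{Item1.PropMethodFailsForEmbHer}$\Rightarrow$\eqref{Item3.PropMethodFailsForEmbHer} and \eqref{Item1.PropMethodFailsForEmbHer}$\Rightarrow$\eqref{Item2.PropMethodFailsForEmbHer}, which is the constructive heart of the proposition. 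Starting from a block-rank-one ghost projection $q=\SOTh\sum_n q_n\in\cstu(Y)$ (where each $q_n=\chi_{Y_n}q\chi_{Y_n}$ has rank one and the $Y_n$ are finite and disjoint), I would use the family $(q_n)_n$ as the target images: define $\Phi(\chi_{x_n})=q_n$ and extend. Because $q$ is a block projection with controlled propagation approximation (Lemma~\ref{LemmaUnifApprox} guarantees the family $(\SOTh\sum_{n\in M}q_n)_M$ is equi-approximable), the assignment $\Phi(a)=\SOTh\sum_n a_{n}q_n$ for diagonal $a=(a_n)_n\in\ell_\infty(X)$, suitably combined with the compact part, will land in $\cstu(Y)$. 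The ghost condition on $q$ translates directly into $\sup_y\|q_n\delta_y\|\to 0$, which gives $\inf_n\sup_y\|\Phi(\chi_{x_n})\delta_y\|=0$, the desired failure of \eqref{EqfWitRig}.

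The two separate target conditions require slightly different bookkeeping. For \eqref{Item3.PropMethodFailsForEmbHer}, to make $\Phi$ unital and rank-preserving, I would need the $(q_n)_n$ to form a maximal family of orthogonal rank-one projections with $\SOTh\sum_n q_n=1_{\ell_2(Y)}$; since a single block-rank-one ghost projection need not sum to the identity, the trick is to absorb the orthogonal complement: split $1_{\ell_2(Y)}-q$ into a sequence of rank-one projections (using that $Y$ is infinite u.l.f., so $\ell_2(Y)$ is separable) and interleave them, so the combined family is a maximal orthogonal family of rank-one projections while preserving a subfamily whose sup-of-matrix-entries tends to zero. For \eqref{Item2.PropMethodFailsForEmbHer}, instead I would let $\Phi$ map $\cstu(X)$ onto the hereditary subalgebra $\overline{q\,\cstu(Y)\,q}$ generated by the block projection, and the hereditary condition follows because each $q_n$ is supported on the finite set $Y_n$, so the cut-down $q\cstu(Y)q$ is exactly a block-diagonal algebra isomorphic to $\prod_n^{\mathrm{c}_0}\cB(\ell_2(Y_n))$-type corner — the point being that $\cstu(X)$, being commutative modulo compacts, embeds onto its diagonal.

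For the reverse directions \eqref{Item2.PropMethodFailsForEmbHer}$\Rightarrow$\eqref{Item1.PropMethodFailsForEmbHer} and \eqref{Item3.PropMethodFailsForEmbHer}$\Rightarrow$\eqref{Item1.PropMethodFailsForEmbHer}, I would run the argument backward: given such an embedding $\Phi$ with $\inf_n\sup_y\|\Phi(\chi_{x_n})\delta_y\|=0$, the images $p_n:=\Phi(\chi_{x_n})$ are pairwise orthogonal projections. Under either hypothesis ($\Phi$ rank-preserving, or $\Phi$ onto a hereditary subalgebra with $\Phi(\ell_\infty(X))$ a masa by Lemma~\ref{L.WW}), each $p_n$ has rank one. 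The condition $\SOTh\sum_{n\in M}p_n\in\cstu(Y)$ for all $M$ follows from strong continuity (or from $\Phi(\chi_{X_M})\in\cstu(Y)$ since $\chi_{X_M}\in\ell_\infty(X)\subseteq\cstu(X)$). I would then pass to a subsequence and, exactly as in the proof of Lemma~\ref{LemmaGeoCondForEquiApproxAreBoundedBelow}\eqref{LemmaGeoCondForEquiApproxAreBoundedBelow2}, use $\inf_n\sup_y\|p_n\delta_y\|=0$ to find finite disjoint $Y_n$ with $\|p_n-\chi_{Y_n}p_n\chi_{Y_n}\|$ small, correct each corner to an honest rank-one projection $q_n$ by functional calculus, and assemble $\SOTh\sum_n q_n$ into a noncompact block-rank-one ghost projection in $\cstu(Y)$.

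The main obstacle I expect is verifying that the forward-constructed $\Phi$ is genuinely an algebra embedding landing inside $\cstu(Y)$, rather than merely a map defined on the diagonal. The subtle point is controlling the images of compact operators and checking that $\Phi$ extends $*$-homomorphically and isometrically to all of $\cstu(X)=\ell_\infty(X)+\cK$; this hinges on the equi-approximability from Lemma~\ref{LemmaUnifApprox} to guarantee the SOT-sums have controlled propagation approximations and hence remain in $\cstu(Y)$. Matching the precise ancillary requirements — maximality/unitality for \eqref{Item3.PropMethodFailsForEmbHer} versus the hereditary range for \eqref{Item2.PropMethodFailsForEmbHer} — while keeping the ghost defect $\inf_n\sup_y\|q_n\delta_y\|=0$ intact, is where the bookkeeping is delicate, but no genuinely new idea beyond Lemmas~\ref{LemmaUnifApprox}, \ref{L.WW}, and \ref{LemmaGeoCondForEquiApproxAreBoundedBelow} should be needed.
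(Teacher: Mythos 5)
Your skeleton (block-rank-one ghost $\leftrightarrow$ images of the $\chi_{x_n}$, via $\cstu(X)=\ell_\infty(X)+\cK(\ell_2(X))$) is the paper's, and your \eqref{Item1.PropMethodFailsForEmbHer}$\Rightarrow$\eqref{Item2.PropMethodFailsForEmbHer} is essentially the paper's proof (modulo one omission: you must pass to a subsequence so that $Y'=\bigsqcup_n Y_n$ is \emph{sparse}, otherwise $q\cstu(Y)q$ can contain off-block-diagonal elements $q_ne q_m$ that are not in the range of $\Phi$, and hereditarity of the range fails). But your proof of \eqref{Item1.PropMethodFailsForEmbHer}$\Rightarrow$\eqref{Item3.PropMethodFailsForEmbHer} has a genuine gap. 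Interleaving rank-one pieces of $1_{\ell_2(Y)}-q$ produces a \emph{strongly continuous} unital map, so for it to land in $\cstu(Y)$ you need a decomposition $1-q=\SOTh\sum_k r_k$ into rank-one projections such that \emph{every} subsum $\SOTh\sum_{k\in M}r_k$ lies in $\cstu(Y)$. Membership of a projection in $\cstu(Y)$ is not inherited by rank-one pieces: in the expander model, taking $\eta_n=(\delta_{y_n}-\delta_{z_n})/\sqrt2$ with $y_n,z_n\in Y_n$, $d(y_n,z_n)\to\infty$ (these vectors are orthogonal to the constant vectors spanning $q$), the projection $\SOTh\sum_n\langle\cdot,\eta_n\rangle\eta_n$ has matrix entries of modulus $1/2$ at pairs $(y_n,z_n)$ of unbounded distance, hence is \emph{not} in $\cstu(Y)$. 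Whether a suitable decomposition ever exists is exactly the delicate issue raised in Question~\ref{Q1}; no argument you have sketched produces one. The paper sidesteps this entirely: it keeps $\Phi$ with $\Phi(1)=p$ and adds a second homomorphism $\Psi$ defined from a nonprincipal ultrafilter $\cU$ ($\Psi$ kills $\cK(\ell_2(X))$ and sends $\chi_A$ to $1-p$ iff $A\in\cU$, to $0$ otherwise); then $\Theta=\Phi+\Psi$ is unital and rank-preserving but deliberately \emph{not} strongly continuous.

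The same oversight undermines your \eqref{Item3.PropMethodFailsForEmbHer}$\Rightarrow$\eqref{Item1.PropMethodFailsForEmbHer}. To invoke Lemma~\ref{LemmaUnifApprox} and then (the proof of) Lemma~\ref{LemmaGeoCondForEquiApproxAreBoundedBelow} you need $\SOTh\sum_{n\in M}p_n\in\cstu(Y)$ for all $M\subseteq\N$, and your justification ``strong continuity, or $\Phi(\chi_{X_M})\in\cstu(Y)$'' does not deliver it: a unital rank-preserving embedding need not be strongly continuous --- the map $\Theta$ above is the counterexample, since $\Theta(\chi_X)=1\neq p=\SOTh\sum_n\Theta(\chi_{x_n})$ --- and without strong continuity $\Phi(\chi_{X_M})$ simply is not the operator $\SOTh\sum_{n\in M}p_n$. (For \eqref{Item2.PropMethodFailsForEmbHer} this is fine, because a hereditary range forces strong continuity by \cite[Lemma 6.1]{BragaFarahVignati2019Comm}; note also that rank-one-ness of the $p_n$ in case \eqref{Item3.PropMethodFailsForEmbHer} is immediate from rank preservation, no appeal to Lemma~\ref{L.WW} needed.) The missing step, which is the paper's fix, is a compression: set $q=\SOTh\lim_n\Phi(\chi_{\{x_1,\ldots,x_n\}})$; by \cite[Theorem 4.3]{BragaFarahVignati2019Comm} one has $q\in\cstu(Y)$ and $a\mapsto q\Phi(a)q$ is a strongly continuous rank-preserving embedding that agrees with $\Phi$ on every $\chi_x$, so one may run your (otherwise correct) ghost-assembly argument on this compression instead of on $\Phi$.
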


\begin{proof}
Since $X$ is the coarse disjoint union of singletons, finite propagation elements of $C^*_u(X)$ with zero diagonal are finite rank.  Hence 
\[
\cstu(X)=\ell_\infty(X)+\cK(\ell_2(X)).
\] 
Therefore, in order to define a $^*$-homomorphism $\Phi\colon\cstu(X)\to \cstu(Y)$, it is enough to define $\Phi$ on $\ell_\infty(X)$ and on each $e_{x_nx_m}$, and then extend it linearly and continuously to the whole $\cstu(X)$, where $e_{xz}$ is the rank $1$ partial isometry mapping $\delta_x$ to $\delta_z$, for $x,z\in X$.

\eqref{Item1.PropMethodFailsForEmbHer}$\Rightarrow$\eqref{Item2.PropMethodFailsForEmbHer}: Let $p\in \cstu(Y)$ be a block-rank-one ghost projection and let $(Y_n)_n$ be a disjoint sequence of finite subsets of $Y$ such that \[p=\SOTh\sum_{n\in\N}\chi_{Y_n}p\chi_{Y_n}.\] Replacing $(Y_n)_n$ by a subsequence if necessary, we can assume that $Y'=\bigsqcup_nY_n$ is sparse. For each $n\in\N$, let $p_n=\chi_{Y_n}p\chi_{Y_n}$ and pick a normalized $\xi_n\in \ell_2(X)$ such that $p_n=\langle \cdot ,\xi_n\rangle \xi_n$. 

Define an embedding $\Phi\colon \cstu(X)\to \cstu(Y)$ by $\Phi(\sum_{n\in M}\chi_{x_n})=\sum_{n\in M}p_n$ for all $M\subseteq \N$, and $\Phi(e_{x_nx_m})=\langle \cdot, \xi_m\rangle \xi_n$ for all $n,m\in\N$. As $\Phi(\chi_{x_n})=p_n$ for all $n\in\N$ and as $p$ is a ghost, then $ \inf_{x\in X}\sup_{y\in Y}\|\Phi(\chi_x)\delta_{y}\|=0$.
 
It remains to notice that $\Phi(\cstu(X))$ is a hereditary subalgebra of $\cstu(Y)$. 
 As $\cstu(Y')$ is a hereditary subalgebra of $\cstu(Y)$ and as $p=\Phi(1_{\ell_2(X)})$, it is sufficient to show that $p\cstu(Y') p\subseteq \Phi(\cstu(X))$. For that, it is enough to notice that $p\cstu[Y'] p\subseteq \Phi(\cstu(X))$. Since $Y'$ is sparse and each $p_n$ has rank~1, this is straightforward and we leave the details to the reader. 

\eqref{Item1.PropMethodFailsForEmbHer}$\Rightarrow$\eqref{Item3.PropMethodFailsForEmbHer}: Let $Y'=\bigsqcup_n Y_n$ and $p=\SOTh\sum_n p_n$ be as in \eqref{Item1.PropMethodFailsForEmbHer}.  Note that as~$p$ is a ghost and $1$ is not, $1-p$ has infinite rank.   Let $\Phi\colon \cstu(X)\to \cstu(Y)$ be a $*$-homomorphism constructed as in the proof of \eqref{Item1.PropMethodFailsForEmbHer}$\Rightarrow$\eqref{Item2.PropMethodFailsForEmbHer}, so in particular, $\Phi(1)=p$.  Fix a nonprincipal ultrafilter $\cU$ on $\N$ and let $\Psi\colon \cstu(X)\to \cstu(Y)$ be a $^*$-homomorphism determined by $\Psi(\cK(\ell_2(X)))=\{0\}$ and 
\[
\Psi(\chi_{A})=\left\{\begin{array}{ll}
1-p,& \text{ if } A \in \cU,\\
0,& \text{ if }A\not\in \cU.
\end{array}\right.
\]
As $\Phi(a)\Psi(b)=\Psi(b)\Phi(a)=0$ for all $a,b\in \cstu(X)$, the map $\Theta=\Phi+\Psi$ is a $^*$-homomorphism, hence an embedding.\footnote{Notice that $\Theta$ is not strongly continuous; see \cite[Theorem~6.1]{BaudierBragaFarahVignatiWillett2022vNA} for the proper context.} Moreover, it is clear that $\Theta$ is unital, rank preserving, and that $\inf_{x\in X}\sup_{y\in Y}\|\Theta(\chi_x)\delta_{y}\|=0$.

\eqref{Item2.PropMethodFailsForEmbHer}$\Rightarrow$\eqref{Item1.PropMethodFailsForEmbHer} and \eqref{Item3.PropMethodFailsForEmbHer}$\Rightarrow$\eqref{Item1.PropMethodFailsForEmbHer}: Suppose \eqref{Item1.PropMethodFailsForEmbHer} fails and let us show that both \eqref{Item2.PropMethodFailsForEmbHer} and \eqref{Item3.PropMethodFailsForEmbHer} must fail. For that, consider an embedding $\Phi\colon \cstu(X)\to\cstu(Y)$. If $\Phi(\cstu(X))$ is a hereditary subalgebra of $\cstu(Y)$, then $\Phi$ is strongly continuous and rank-preserving (\cite[Lemma 6.1]{BragaFarahVignati2019Comm}). We will prove that if there is a rank-preserving embedding then there is a strongly continuous rank-preserving embedding.\footnote{Again see \cite[Theorem~6.1]{BaudierBragaFarahVignatiWillett2022vNA} for the proper context.} Suppose that $\Phi$ is rank-preserving, and let 
\[
q=\SOTh\lim_n\Phi(\chi_{\{x_1,\ldots, x_n\}})
\]
and notice that, by \cite[Theorem 4.3]{BragaFarahVignati2019Comm}, $q\in \cstu(Y)$ and the map
\begin{align*}
 \Phi^q\colon \cstu(X)&\to \cstu(Y)\\
 a&\mapsto q\Phi(a)q
\end{align*} 
is a strongly continuous rank-preserving embedding. As $q\geq\Phi(\chi_x)$, $\Phi^q(\chi_x)=\Phi(\chi_x)$ for all $x\in X$. 

Therefore $\Phi^q$ is a strongly continuous rank-preserving embedding, and there is no loss of generality to assume that $\Phi$ is strongly continuous. Therefore, as~$\Phi$ is rank-preserving, Lemmas \ref{LemmaUnifApprox} and \ref{LemmaGeoCondForEquiApproxAreBoundedBelow} say that if there are no ghost block-rank-one projections in $\cstu(Y)$, then $\inf_{x\in X}\sup_{y\in Y}\|\Phi(\chi_x)\delta_{y}\|>0$. This shows that $\neg$\eqref{Item1.PropMethodFailsForEmbHer}$\Rightarrow\neg$\eqref{Item2.PropMethodFailsForEmbHer} and $\neg$\eqref{Item1.PropMethodFailsForEmbHer}$\Rightarrow\neg$\eqref{Item3.PropMethodFailsForEmbHer}.
\end{proof}

\begin{remark}
As shown in \cite[Corollary C]{LiSpakulaZhang2020}, Item \eqref{Item1.PropMethodFailsForEmbHer} of Proposition \ref{PropMethodFailsForEmbHer} is equivalent to $Y$ containing no coarse disjoint union $Y'=\bigsqcup_nY_n$ consisting of ghostly measured asymptotic expanders (see \cite[Definition 1.3]{LiSpakulaZhang2020} for definitions).
\end{remark}

\section{Are (I) and (II) equivalent?}
As mentioned earlier, the question whether the properties (I) and (II) of $\cstu(X)$ discussed in the introduction are equivalent reduces to an innocent-looking question. 

\begin{question} \label{Q1} Let $X$ be a u.l.f.\ metric space. Suppose that $p_n$ is a sequence of projections in $\cstu(X)$ such that $\SOTh\sum_{n\in M} p_n$ is in $\cstu(X)$ for every $M\subseteq \bbN$ and each $p_n$ is of rank strictly greater than $1$.
\begin{enumerate}
\item Can we conclude that for every $n$ there are projections $q_n\leq p_n$ of rank 1 such that $\SOTh\sum_{n\in M} q_n$ is in $\cstu(X)$ for every $M\subseteq \bbN$? 

\item\label{Q1.2} Can we at least conclude that there are an infinite $M\subseteq \bbN$ and $q_n\leq p_n$ of rank 1 for all $n\in M$ such that $\SOTh\sum_{n\in M_0} q_n$ is in $\cstu(X)$ for every $M_0\subseteq M$?
\end{enumerate}
\end{question}

A positive answer to either part of Question~\ref{Q1} would imply that (I) and (II) are equivalent. A partial result is given in \cite[Lemma 4.3]{BragaFarahVignati2018}, where a positive answer to \eqref{Q1.2} is given in case the projections of interest satisfy our usual regularity condition, that is, if $\inf_{n\in\N}\sup_{x\in X}\norm{p_n\chi_x}>0$.

\begin{acknowledgments}
This paper was written under the auspices of the American Institute of Mathematics (AIM) SQuaREs program and as spinoff of the `Expanders, ghosts, and Roe algebras' SQuaRE project, joint with Florent Baudier, Anna Khukhro, and Rufus Willett.  B.\ M.\ B. was partially supported by the US National Science Foundation under the grant DMS-2054860. I.\ F.\ is partially supported by NSERC. A.\ V.\ is supported by an `Emergence en Recherche' IdeX grant from the Universit\'e Paris Cit\'e and an ANR grant (ANR-17-CE40-0026). The authors would also like to thank Rufus Willett for his help proofreading this paper.
\end{acknowledgments}
 
\bibliographystyle{amsalpha}
\bibliography{bibliography}

\end{document}